\theoremstyle{definition}
\newtheorem{assumption}{Assumption}
\def\email#1{\it #1\par}
\providecommand{\otherindexspace}[1]{}
\newtheorem{theorem}{Theorem}[section]
\newtheorem{lemma}[theorem]{Lemma}
\newtheorem{proposition}[theorem]{Proposition}
\newtheorem{remark}[theorem]{Remark}
\newtheorem{definition}[theorem]{Definition}
\newtheorem{corollary}[theorem]{Corollary}
\DeclareMathOperator*{\argmax}{arg\,max}
\DeclareMathOperator{\op}{\overline{\mathbb P}}
\definecolor{falured}{rgb}{0.5, 0.09, 0.09}
\numberwithin{equation}{section}
\title{Energy transition under scenario uncertainty: a mean-field game of stopping with common noise}
\author{ Roxana Dumitrescu \thanks{Department of Mathematics, King's College London, Strand, London, WC2R 2LS, United Kingdom, Email: \email roxana.dumitrescu@kcl.ac.uk} \and Marcos Leutscher  \thanks{CREST, ENSAE, Institut Polytechnique de Paris, 5 avenue Henry Le Chatelier, 91120 Palaiseau, France, Email: \email marcos.leutscherdelasnieves@ensae.fr} \and Peter Tankov \thanks{CREST, ENSAE, Institut Polytechnique de Paris, 5 avenue Henry Le Chatelier, 91120 Palaiseau, France, Email: \email peter.tankov@ensae.fr}}
\date{}
\begin{document}
\maketitle

\begin{abstract}
We study the impact of transition scenario uncertainty, namely that of future carbon price and electricity demand, on the pace of decarbonization of the electricity industry. 
To this end, we develop a theory of optimal stopping mean-field games with non-Markovian common noise and partial observation. For mathematical tractability, the theory is formulated in discrete time and with common noise restricted to a finite probability space. We prove the existence of Nash equilibria for this game using the linear programming approach. 
We then apply the general theory to build a discrete time  model for the long-term dynamics of the electricity market subject to common random shocks affecting the carbon price and the electricity demand.  We consider two classes of agents: conventional producers and renewable producers. The former choose an optimal moment to exit the market and the latter choose an optimal moment to enter the market by investing into renewable generation. The agents interact through the market price determined by a merit order mechanism with an exogenous stochastic demand.  We illustrate our model by an example inspired by the UK electricity market, and show that scenario uncertainty leads to significant changes in the speed of replacement of conventional generators by renewable production.
\end{abstract}

\textbf{Key words}: Electricity market, energy transition, scenario uncertainty, mean-field games, common noise, optimal stopping, partial information\\

\textbf{MSC Classification}: 91A55, 91A13, 91A80\\

\textbf{JEL Classification}: C73, Q42

\section{Introduction}
Given the climate emergency, there is no doubt that in the years and decades to come, the transition to a low-carbon economy will  lead to fundamental transformations in the energy industry. However, there is a considerable uncertainty about the pace of these transformations, which is only slowly resolved through government announcements and regulatory shifts. Economic agents operating in energy markets must therefore make their investment decisions taking into account the uncertainty about the future market shares, costs and profits of various electricity generation technologies.  In the face of these uncertainties, the scenario approach has emerged as a means to structure decision making and optimize future actions. Scenarios are plausible trajectories of evolution of macroeconomic variables, parameterized by specific assumptions on future climate and policy, which are typically produced with integrated assessment models (IAM) and maintained by international organizations such as IEA (International Energy Agency), IPCC (Intergovernmental Panel on Climate Change) and NGFS (Network for Greening the Financial System). These databases typically contain many scenarios, which differ both in the underlying IAM and the relative stringency of climate policy assumptions. 
For example, the NGFS database\footnote{Available at \url{https://data.ene.iiasa.ac.at/ngfs/}} contains 6 sets of scenarios with varying stringency, from Current Policies (least stringent) to Net Zero 2050 (most stringent). Thus, even if a specific scenario database has been fixed, decision makers still face a considerable scenario uncertainty regarding the future policy choices.

Our aim in this paper is to understand how the scenario uncertainty impacts the dynamics of the electrical industry, and, in particular, the rate at which conventional generation is replaced by renewable plants, in the presence of many interacting agents. To achieve this goal, we extend the electricity market model based on mean-field games (MFGs) of optimal stopping of \cite{adt2021} by incorporating scenario uncertainty.

To this end, we build a theory of optimal stopping MFGs with common noise and partial information, which was not previously available in the literature. Following, \cite{bdt2020,dlt2021,adt2021} we employ the \emph{linear programming approach}, which is a compactification technique that works by reformulating the problem in terms of occupation measures of the agents instead of stopping times. 
Starting with a partial information setting, we recover Markovian dynamics by enlarging the state space. This allows us to show the existence of strong solutions in MFGs with common noise using topological arguments and also to build numerical algorithms, which are instrumental for practical applications. Here strong solution should be understood in the sense of \cite{carmona2016a}, i.e. the mean-field terms are adapted to the common noise filtration. We give a complete treatment of MFGs with optimal stopping {in a discrete time setting and involving a common noise taking values in a finite set}. This includes an interpretation of the occupation measures in terms of randomized stopping times (see \cite{baxter1977}), as well as a rigorous construction of approximate Nash equilibria for games with finite number of players. { In the context of MFGs of optimal stopping of preemption type, under quite strong assumptions, an approximation result with distributed strategies for finite player games is provided in \cite{carmona2017}. To the best of our knowledge, in the setting of MFGs of optimal stopping of war of attrition type, our paper is the first to show such an approximation result.} Finally, we propose a numerical algorithm based on the linear programming fictitious play introduced in \cite{dlt2022}, which is adapted to the framework of common noise. 

We then apply the general theory to build a discrete time  model for the long-term dynamics of the electricity market subject to common random shocks affecting the carbon price and the electricity demand. 
Following \cite{adt2021}, we assume that two types of agents, conventional producers and renewable producers, are interacting through the electricity price. The conventional producers aim to find the optimal time to leave the market, and the potential renewable project owners aim to find the optimal time to invest and enter the market. Unlike \cite{adt2021}, where the future parameters of the market  are assumed to be deterministic and known to the agents, here, we suppose that the electricity demand and the costs of the conventional producers depend on the random carbon price which is influenced by the government announcements / regulatory changes.

We illustrate our model with an example inspired by the UK electricity market, and show that scenario uncertainty leads to significant changes in the speed of replacement of conventional generators by renewable production, emphasizing the role of reliable information for successful energy transition.

\paragraph{Literature review} A variety of approaches have been used to analyse the dynamics of environmental transition of the energy sector depending on carbon price levels and technology policies. Transition scenarios used by international organisms such as IPCC, NGFS or IEA are produced with \emph{integrated assessment models}: partial or general equilibrium macroeconomic optimization models with a detailed representation of the energy sector, such as REMIND \cite{bauer2012remind}, IMACLIM \cite{hourcade2010imaclim}, TIMES \cite{loulou2008etsap} etc., or with energy system optimization models, which focus specifically on the energy sector and determine the lowest cost trajectory under a set of assumptions \cite{decarolis2017formalizing}. These models in most cases produce a single deterministic scenario, usually by solving the optimization problem of a central planner with perfect foresight. Interaction among agents, imperfect information, and uncertainty about the future values of demand, technology costs and other factors are rarely taken into account. At the other end of the spectrum one finds the computational agent-based models \cite{WV08}. These models allow for heterogeneous agents and a precise description of their interactions and of the market structure, but involve very intensive computations and do not provide any insight about the model (uniqueness of the equilibrium, robustness etc.) beyond what can be recovered from a simulated trajectory.

The MFGs are a viable compromise between the complexity of computational agent-based models and the tractability of
fully analytic approaches. MFGs, introduced in \cite{lasry2006a, lasry2006b, LL07, huang2006} are
stochastic games with a large number of identical agents in statistical sense and symmetric
interactions where each agent interacts with the average density of
the other agents (the mean field) rather than with each individual
agent. This simplifies the problem, leading to explicit solutions or
efficient numerical methods for computing the equilibrium dynamics. In
the recent years MFGs have been successfully used to model specific
sectors of electricity markets, such as price formation \cite{gomes2020mean,feron2022price}, electric vehicles \cite{CPTD12,shokri2018mean,tchuendom2019quantilized}, demand dispatch \cite{BB14,kizilkale2019integral,elie2021mean}, storage \cite{AMT20,ACDZ21}, pollution regulation \cite{santibanez2023pollution,carmona2022mean}  and renewable energy certificates markets \cite{shrivats2022mean}. 

An important recent development is the introduction of
MFGs of optimal stopping \cite{B18,bdt2020,carmona2017,gomes2015obstacle,dlt2021,nutz2018mean,he2023mean,possamai2023mean}, which can
describe technology switches and entry/exit decisions of players.

In \cite{adt2021}, the authors used the linear programming approach of \cite{bdt2020} to build an MFG model for the long-term evolution of an electricity market allowing for two classes of agents (conventional and renewable) interacting through the market price. That paper, similarly to most of the literature on MFGs, assumes that the sources of randomness affecting different agents are independent, and averaged out in the mean-field limit. As a result, the model produces a deterministic price trajectory for a given deterministic scenario of electricity demand and carbon price. However, the future climate, climate-related economic policies, and therefore also future energy demand and carbon price are subject to deep uncertainties \cite{chenet2019climate,monasterolo2019uncertainty,bolton2020green,nordhaus2018projections}. These are related, among other factors, to uncertainty about future availability of mitigation technologies  and international cooperation \cite{edenhofer2006induced}; uncertainty of climate sensitivity and carbon budgets, which may have to be revised in the future \cite{meehl2020context,cox2018emergent}, uncertainty about tipping points, which may require bold immediate actions \cite{keen2022estimates,weitzman2009modeling}, etc. In this context, perfect knowledge of the scenario corresponding to a fixed climate objective is clearly not a valid assumption. In this paper, we are therefore interested in the impact of uncertainty affecting the future carbon price and electricity demand on the pace of energy transition. To this end, we extend the model of \cite{adt2021} by allowing for common noise affecting all agents, which is not averaged out in the mean-field limit. 

Although MFGs with common noise were introduced already in \cite{lions2007theorie}, and early papers contain examples of solvable settings \cite{gueant2011mean}, the general analysis of MFGs with common noise (with regular controls) was only presented in \cite{carmona2016a}. The main difficulty is due to the much larger dimension of the natural space for the main objects of the MFG problem. For example, in the case of regular controls, the time-dependent occupation measure of the representative agent typically lives in the space of continuous functions from the time interval to the space of probability measures $\mathcal C([0,T],\mathcal P(\mathbb R^d))$; in the presence of common noise one needs to consider the space $[\mathcal C([0,T],\mathcal P(\mathbb R^d))]^\Omega$, where $\Omega$ is the probability space carrying the common noise: this makes it nearly impossible to apply the usual compactness arguments. {As pointed out in \cite{carmona2016a}, the operation of conditioning fails to be continuous in any useful sense, which complicates the study of fixed points via topological arguments.} The situation simplifies in the setting of finite probability spaces, see \cite{belak2021continuous}. A promising approach for the special class of \emph{supermodular} mean-field games allowing for a simplified treatment of common noise was recently proposed in \cite{dianetti2021submodular}. {Very recently \cite{he2023mean}, developed a different approach based on a mean-field version of the Bank-El Karoui representation result for stochastic processes, which also allows for common noise. }

In the context of MFGs of optimal stopping, which are the object of this paper, introducing common noise is even more difficult, since occupation measures of agents are not continuous as function of time. The models of \cite{carmona2017,nutz2018mean} do allow for common noise and establish the existence of an equilibrium using the special structure of the game (e.g., the complementarity property which is characteristic of preemption games, appearing e.g., in bank run problems). Our setting is that of games of war of attrition, and the methods of  \cite{carmona2017,nutz2018mean} are therefore not applicable. {Similarly, \cite{dianetti2021submodular}
 and \cite{he2023mean} impose a monotonicity condition in the measure argument on the reward function (see Assumption 7.4 in \cite{dianetti2021submodular} and Proposition 2.20 in \cite{he2023mean}), which excludes the type of games considered in this paper. The present paper is therefore, to the best of our knowledge, the first attempt to include common noise into optimal stopping mean field games of war of attrition type.}

Furthermore, in a general setting with common noise in \cite{carmona2016a,carmona2017}, the authors give abstract existence results using the notion of weak solutions, which do not seem convenient for numerical algorithms. For these reasons, to include common noise into the problem, we place ourselves in a discrete-time and finite probability space framework for the common noise. We thus develop a linear programming formulation for discrete-time optimal stopping MFGs with common noise, where the state process is a Markov chain but the common noise may be non-Markovian, showing the existence of Nash equilibrium using Kakutani-Fan-Glicksberg fixed-point theorem for set valued mappings. Note that a recent preprint \cite{guo2022mf} discusses the linear programming formulation for discrete-time MFG based on controlled Markov chains, but these authors do not consider the setting of optimal stopping, common noise and partial information.

{ The paper is organized as follows.  In Section \ref{abstract}, we define the MFG problem in a general framework. We show existence of equilibria, give a probabilistic representation of our constraint and provide an approximation result for finite player games. In Section \ref{sec n player}, we describe the electricity market model in the finite player game setting, then, in Section \ref{model_mfg}, we show how the limiting formulation of this model fits in the framework of Section \ref{abstract}, which allows to derive existence of equilibria and the uniqueness of the equilibrium price process. In Section \ref{numerics} we describe an adaptation of the algorithm proposed in \cite{dlt2022} to the common noise case. Finally, an illustration of the model, inspired by the UK electricity market, is provided in Section \ref{illustration}.
In the Appendix we give some complementary results on the linear programming approach and other technical results.}

\paragraph{Notation.} Throughout this paper, empty sums are considered to be zero. For a topological space $(E, \tau)$ we denote by $\mathcal{B}(E)$ the Borel $\sigma$-algebra, by $\mathcal{M}^s(E)$ the set of Borel finite signed measures on $E$, by $\mathcal{M}(E)$ the set of Borel finite positive measures on $E$, by $\mathcal{P}^{sub}(E)$ the set of Borel subprobability measures on $E$ and by $\mathcal{P}(E)$ the set of Borel probability measures on $E$. We denote by $M(E)$ the set of Borel measurable functions from $E$ to $\mathbb R$, by $M_b(E)$ the subset of Borel measurable and bounded functions, by $C(E)$ the subset of continuous functions, and by $C_b(E)$ the subset of continuous and bounded functions. The set $M_b(E)$ is endowed with the supremum norm $\|\varphi\|_\infty=\sup_{x\in E}|\varphi(x)|$. Given a probability space $(\Omega, \mathcal{F}, \mathbb P)$ and a sub sigma-algebra $\mathcal{G}$, we define the $\mathbb P$-null sets in $\mathcal{G}$ as 
$$\mathcal{N}_{\mathbb P}(\mathcal{G}):=\{B\subset \Omega: B\subset C \text{ for some } C\in \mathcal{G} \text{ with } \mathbb P(C)=0\}.$$
When we extend sub sigma-algebras by adding null sets, with some abuse of notation, we will still denote the extended probability measure with the same notation.

\section{Discrete time optimal stopping MFGs with common noise and partial information}\label{abstract}

In this section, we develop the linear programming approach to solve discrete time optimal stopping MFGs with common noise  and partial information. In particular, we show the existence of an equilibria and provide a rigorous approximation result of the equilibria in the $N$-player game. We first present the results in the case of a single population and then we extend them to the case of several populations.

\subsection{Probabilistic set-up}\label{proba set}

Let $I=\{0, \ldots, T\}$ be the set of time indices with $I^*=\{1, \ldots, T\}$. We are given a nonempty compact metric space $(E, d)$ and a nonempty finite set $H$. \vspace{5pt}

\noindent Let $(\Omega, \mathcal F, \mathbb F, \mathbb P)$ be a complete filtered probability space supporting the process $(X, Z)$ satisfying:
\begin{enumerate}[(1)]
\item The state process $X=(X_t)_{t\in I}$ is an $\mathbb F$-adapted process taking values in $E$.
\item The common noise process $Z=(Z_t)_{t\in I}$ is an $\mathbb F$-adapted process taking values in $H$.
\end{enumerate}

Consider a nonempty compact set $D\subset E$ and the $\mathbb F$-stopping time $\tau^X_{D}:=\inf\{t\in I: X_t\notin D\}$ with the convention $\inf\emptyset=\infty$. The agents will undergo absorption if their state exits the set $D$. We assume that $X_0 \in D$ a.s.

\paragraph{Common noise information.} In the setting of MFGs with common noise, the mean-field terms will be conditional measures given the information of the common source of randomness which in our case is $Z$. Consider the associated filtration $\mathbb F^Z$, which models the common noise information conditioning the mean-field terms.\vspace{5pt}

The linear programming formulation without common noise for an optimal stopping problem consists in replacing the expectations of the processes and the stopping times by occupation measures and embedding these measures in a well behaved space (usually compact and convex) (see e.g. \cite{stockbridge2002, buckdahn2011stochastic, bdt2020, dlt2021}). This technique has been used in several works as \cite{stockbridge1998, stockbridge2002, buckdahn2011stochastic, bdt2020, dlt2021} in the case when the underlying processes are Markovian and the rewards depend only on the present states. However, in our setting, the reward functions will depend on the mean-field terms, which depend on the past of the common noise. The idea is then to construct another process summarizing the common noise information and such that together with the state processes of the representative agent, it defines a Markov process. This technique was used in e.g. \cite{kurtz1998} in the context of continuous time processes and filtered martingale problems.  \vspace{5pt}

The information given by the trajectory of the common noise $Z$ up to time $t$ will be summarized in a finite dimensional (matrix-valued) process $U_t$. Let $\mathcal{I}:=\{1, \ldots, |H|\}$ and write $H = \{z_j: j\in \mathcal{I}\}$ with $z_j\neq z_{j'}$ for $j\neq j'$, $(j, j')\in \mathcal{I}^2$. 
We define the process $U$ taking values in $W:=\{0, 1\}^{(N+1)\times |H|}$ (i.e. the space of $(N+1)\times |H|$-dimensional matrices with entries being $0$ or $1$) by
$$U_t(\omega)(s+1, j)=\mathds{1}_{s\leq t}\mathds{1}_{Z_{s}(\omega)=z_j},\quad (s, j)\in I\times \mathcal{I}, \quad t\in I,\quad \omega\in \Omega.$$
Alternatively, for $s\in I$ and $\bar z \in H$, let $M[s, \bar z]$ be the $(N+1)\times |H|$-dimensional matrix such that
$$M[s, \bar z](r+1, j)= \mathds{1}_{s=r}\mathds{1}_{\bar z = z_j}, \quad (r, j)\in I\times \mathcal{I}.$$
Then we can check that for $t\in I$, $U_t=\sum_{s=0}^t M[s, Z_s]$. For each $t\in I$ we define the function $\Psi_t:W\rightarrow H$ by
\begin{equation}\label{def psi}
\Psi_t(u):=\sum_{j\in \mathcal{I}}z_j\mathds{1}_{u(t+1, j)=1}, \quad u\in W.    
\end{equation}
This function satisfies $Z_s = \Psi_s(U_t)$ for all $s\leq t$ and $t\in I$, hence the information about the entire trajectory of $Z$ up to time $t$ is encoded in $U_t$. 
Moreover, we can conclude that $\sigma(U_t)=\mathcal{F}_t^{Z} =\mathcal{F}_t^{U}$, which also shows that $U$ is a Markov chain. We can also compute the transition kernels of $U$ as stated in the following lemma.
\begin{lemma}\label{U Markov}
The process $U$ is a Markov chain with transition kernels given for $t\in I^*$, $u$, $u'\in W$ by
\begin{align*}
\pi^U_t(u; u') :=
\begin{cases}
\sum_{z\in H}\mathds{1}_{u + M[t, z]=u'}\mathbb P(Z_t=z|U_{t-1}=u) \quad &\text{if} \quad  \mathbb P(U_{t-1}=u)>0, \\
\mathds{1}_{u'=u} \quad &\text{if} \quad  \mathbb P(U_{t-1}=u)=0.
\end{cases}
\end{align*}
\end{lemma}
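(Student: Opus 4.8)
The plan is to separate the two assertions---that $U$ is a Markov chain and that its one-step kernel equals $\pi^U_t$---and to lean on the filtration identity $\mathcal{F}_{t-1}^U = \sigma(U_{t-1})$ already recorded above. First I would observe that the entire history of $U$ up to time $t-1$ is encoded in the single matrix $U_{t-1}$: each $U_s$ with $s\leq t-1$ is recovered from $U_{t-1}$ by retaining only the rows corresponding to times $\leq s$, so that $\sigma(U_0, \ldots, U_{t-1}) = \sigma(U_{t-1})$. Consequently, conditioning on $\mathcal{F}_{t-1}^U$ coincides with conditioning on $U_{t-1}$, and the Markov property $\mathbb P(U_t = u' \mid \mathcal{F}_{t-1}^U) = \mathbb P(U_t = u' \mid U_{t-1})$ holds automatically. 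This is the conceptual heart of the statement: the additive, accumulating structure of $U$ makes its past redundant given the present.

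To identify the kernel, I would start from the one-step recursion $U_t = U_{t-1} + M[t, Z_t]$, which is immediate from $U_t = \sum_{s=0}^t M[s, Z_s]$. Fix $t\in I^*$ and $u\in W$ with $\mathbb P(U_{t-1}=u)>0$. On the event $\{U_{t-1}=u\}$ one has $U_t = u + M[t, Z_t]$, whence $\{U_t = u'\}\cap\{U_{t-1}=u\} = \{Z_t\in A\}\cap\{U_{t-1}=u\}$ with $A:=\{z\in H : u + M[t,z] = u'\}$. Dividing the corresponding probabilities by $\mathbb P(U_{t-1}=u)$ then gives
$$\mathbb P(U_t = u' \mid U_{t-1}=u) = \sum_{z\in A}\mathbb P(Z_t = z \mid U_{t-1}=u) = \sum_{z\in H}\mathds{1}_{u + M[t,z]=u'}\,\mathbb P(Z_t = z \mid U_{t-1}=u),$$
which is exactly the first branch of $\pi^U_t$. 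Since the map $z\mapsto M[t,z]$ is injective (the $z_j$ being distinct), $A$ is empty or a singleton, so the sum carries at most one nonzero term; writing it over all of $H$ is purely for notational convenience.

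Finally, for states $u$ with $\mathbb P(U_{t-1}=u)=0$ the conditional probability is undetermined, and any probability measure on $W$ defines a valid transition kernel that leaves the law of the process unchanged; the choice $\pi^U_t(u;\cdot) = \delta_u$ (the second branch) is one such admissible convention, justified simply by the nullity of the governing event. The main obstacle is therefore not computational but bookkeeping: making the passage from the filtration identity $\mathcal{F}_{t-1}^U = \sigma(U_{t-1})$ to the Markov property fully rigorous and handling the null events cleanly. Once that identity is in hand, the kernel identification reduces to unwinding the additive structure of $U$.
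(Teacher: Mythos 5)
Your proof is correct and follows essentially the same route as the paper's: both rest on the recursion $U_t = U_{t-1} + M[t, Z_t]$ together with the identity $\mathcal{F}_{t-1}^U = \sigma(U_{t-1})$, the paper phrasing the computation via a bounded test function $\varphi$ (of which your event-wise calculation is the special case $\varphi = \mathds{1}_{\{u'\}}$). Your additional remarks---the injectivity of $z \mapsto M[t,z]$ and the explicit justification of the $\delta_u$ convention on null states---are correct refinements of details the paper leaves implicit.
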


\begin{proof}
For any $\varphi:W\rightarrow\mathbb R$ measurable and bounded,
\begin{align*}
\mathbb E[\varphi(U_t)|\mathcal{F}_{t-1}^U]=\mathbb E[\varphi(U_{t-1} + M[t, Z_t])|U_{t-1}] = \sum_{z\in H}\varphi(U_{t-1} + M[t, z])\mathbb P(Z_t=z|U_{t-1}).
\end{align*}
We deduce that a transition kernel for $U$ is given for $t\in I^*$, $u$, $u'\in W$ by
\begin{align*}
\pi^U_t(u; u') :=
\begin{cases}
\sum_{z\in H}\mathds{1}_{u + M[t, z]=u'}\mathbb P(Z_t=z|U_{t-1}=u) \quad &\text{if} \quad  \mathbb P(U_{t-1}=u)>0, \\
\mathds{1}_{u'=u} \quad &\text{if} \quad  \mathbb P(U_{t-1}=u)=0.
\end{cases}
\end{align*}

\end{proof}

To give the intuition behind the linear programming constraint that we will use to relax our problem, we start by defining for any stopping time the associated occupation measures and then we derive a forward equation they satisfy. {Assume for the rest of this subsection that $U$ and $(X_t, U_t)_{t\in I}$ are $\mathbb F$-Markov chains with transition kernels $(\pi_t^U)_{t\in I^*}$ and $(\pi_t)_{t\in I^*}$, respectively. Since $U$ is an $\mathbb F$-Markov chain, a similar proof to Lemma \ref{Immersion property} implies that $\mathbb F^U=\mathbb F^Z$ is immersed\footnote{Meaning that for all $t\in I$, $\mathcal{F}_t$ is conditionally independent of $\mathcal{F}^U_T$ given $\mathcal{F}^U_t$.} in $\mathbb F$. In particular, for any $\mathbb F$-adapted and integrable process $Y=(Y_t)_{t\in I}$, we have $\mathbb E[Y_t|\mathcal{F}^Z_t]=\mathbb E[Y_t|\mathcal{F}^Z_T]$ a.s. for all $t\in I$.}

\paragraph{Occupation measures.} For any $\mathbb F$-stopping time $\tau$ taking values in $I$ define the random subprobability measures $\tilde m_t:\Omega\rightarrow \mathcal{P}^{sub}(D)$ and $\tilde\mu_t:\Omega\rightarrow \mathcal{P}^{sub}(E)$ satisfying
\begin{align*}\tilde m_t(B)&:=\mathbb P[X_t\in B, t<\tau\wedge\tau_D^X|\mathcal{F}_t^Z],\quad a.s., \quad B\in \mathcal{B}(D),\quad t\in I\setminus\{T\},\\
\tilde \mu_t(B)&:=\mathbb P[X_t\in B, \tau\wedge\tau_D^X=t|\mathcal{F}_t^Z],\quad a.s., \quad B\in \mathcal{B}(E), \quad t\in I.
\end{align*}
Note that $\tilde m_t$ and $\tilde \mu_t$ are well defined as random variables since $E$ is a compact metric space and they can also be seen as subprobability kernels from $(\Omega, \mathcal{F})$ to $(E, \mathcal{B}(E))$. These random subprobability measures are the central objects of the MFG theory with optimal stopping and common noise. Here $\tilde m_t$ represents the distribution at time $t$ of the players still in the game. On the other hand, $\tilde \mu_t$ is the distribution at time $t$ of the players who exit the game at this time.

Moreover using that $\mathcal{F}_t^{Z}=\sigma(U_t)$ which is finitely generated by the sets $(\{U_t = u\})_{u}$, we can identify these random subprobabilities with the mappings $m_t:\Omega_t\rightarrow \mathcal{P}^{sub}(D)$ and $\mu_t:\Omega_t\rightarrow \mathcal{P}^{sub}(E)$ given by
\begin{align}
m_t(u)(B)&:=\mathbb P[X_t\in B, t<\tau\wedge\tau_D^X|U_t = u], \quad B\in \mathcal{B}(D),\quad t\in I\setminus\{T\}, \quad u\in \Omega_t,\label{occup1}\\
\mu_t(u)(B)&:=\mathbb P[X_t\in B, \tau\wedge\tau_D^X=t|U_t=u], \quad B\in \mathcal{B}(E), \quad t\in I,\quad u\in \Omega_t,\label{occup2}
\end{align}
where $\Omega_t:=\{u\in W:\mathbb P(U_t = u)>0\}$. We will rather work with the latter quantities which have the same interpretation as the first ones but at each particular trajectory of the common noise up to the corresponding time.\vspace{5pt}

\paragraph{Derivation of the constraint.} For any $\varphi\in M_b(I\times E\times W)$ we have
$$\mathbb E[\varphi(t+1, X_{t+1}, U_{t+1})- \varphi(t, X_{t}, U_t)|\mathcal{F}_t]=\mathcal{L}(\varphi)(t, X_{t}, U_t) \quad a.s,$$
where for $(t, x, u)\in I\setminus\{T\}\times E\times W$
$$\mathcal{L}(\varphi)(t, x, u):= \int_{E\times W} [\varphi(t+1, x', u')-\varphi(t, x, u)]\pi_{t+1}(x, u; dx', du').$$
Moreover the process $M(\varphi)$ defined by $M_0(\varphi)=\varphi(0, X_0, U_0)$ and
$$M_t(\varphi) = \varphi(t, X_t, U_t) - \sum_{s=0}^{t-1}\mathcal{L}(\varphi)(s, X_s, U_s), \quad t\in I^*,$$
is an $\mathbb F$-martingale. In particular for the stopping time $\theta:=\tau\wedge\tau_D^X$, we have
$\mathbb E[M_{\theta}(\varphi)|\mathcal{F}_0]=\varphi(0, X_0, U_0)$ a.s. Taking the expectation,
$$\mathbb E[\varphi(\theta, X_{\theta}, U_{\theta})]= \mathbb E[\varphi(0, X_0, U_0)] + \mathbb E\left[\sum_{t=0}^{\theta-1}\mathcal{L}(\varphi)(t, X_t, U_t)\right].$$
We deduce that the occupation measures satisfy the constraint

\begin{multline*}
\sum_{t=0}^T\sum_{u\in \Omega_t}p_t(u)\int_{E}\varphi(t, x, u) \mu_t(u)(dx) \\=\sum_{u\in \Omega_0}p_0(u)\int_{D} \varphi (0, x, u) m_0^*(u)(dx)
+ \sum_{t=0}^{T-1}\sum_{u\in \Omega_t}p_t(u)\int_{D}\mathcal{L}(\varphi)(t, x, u) m_t(u)(dx),
\end{multline*}
for any bounded and measurable test function $\varphi$, where $p_t(u):=\mathbb P(U_t = u)$ and $m_0^*(u)(B):=\mathbb P (X_0\in B|U_0 = u)$. In the sequel, we will restrict the constraint to continuous test functions (they are bounded by compactness), which are more suitable for our topological arguments. 

\subsection{Linear programming formulation}

We endow any discrete set with the discrete topology, the space $E$ is endowed with the topology induced by its metric and the set of subprobability measures on some Polish space is endowed with the topology of weak convergence. The topology on any product space is taken to be the product topology. \vspace{5pt}

\noindent We are given the following reward functions:
$$f_t:D\times W\times   \mathcal{P}^{sub}(D)\rightarrow \mathbb R,\quad t\in I\setminus \{T\},\quad \quad g_t:E\times W\times  \mathcal{P}^{sub}(E)\rightarrow \mathbb R, \quad t\in I.$$
In this subsection and the next one, we let the following assumptions hold true.

\begin{assumption}\label{assump existence}\leavevmode
\begin{enumerate}[(1)]
\item $X_0\in D$ a.s.

\item $(X, U)$ is an $\mathbb F$-Markov chain with transition kernels $(\pi_t)_{t\in I^*}$ such that for all $t\in I^*$, $x, \bar x\in E$, $u\in W$ and $u'\in W$, $\pi_t(x, u;E\times \{u'\})=\pi_t(\bar x, u;E\times \{u'\})$.

\item For each $t\in I^*$, $\pi_t$ is continuous seen as a function from $E\times W$ to $\mathcal{P}(E\times W)$.

\item For each $t$, the functions $(x, u, m)\mapsto f_t(x, u, m)$ and $(x, u, \mu)\mapsto g_t(x, u, \mu)$ are continuous.
\end{enumerate}
\end{assumption}

\begin{remark}
Note that the second assumption is equivalent to the statement that $(X, U)$ and $U$ are $\mathbb F$-Markov chains.
\end{remark}

We start by giving the definition of a strong equilibrium with strict stopping time. Here, we use the terminology \textit{strong} to represent flow of measures adapted to the filtration of the common noise and the terminology \textit{strict stopping time} refers to stopping times with respect to the filtration $\mathbb{F}$. Denote by $\mathcal{T}$ the set of $\mathbb F$-stopping times valued in $I$. 

\begin{definition}[\textit{Strong MFG equilibrium with strict stopping time}]\leavevmode
\begin{enumerate}[(1)]
\item Fix for each $t\in I$, $\mu_t:W\rightarrow \mathcal{P}^{sub}(E)$ and for each $t\in I\setminus\{T\}$, $m_t:W\rightarrow \mathcal{P}^{sub}(D)$ and find the solution to the optimal stopping problem
\begin{equation}\label{opti strong}
\sup_{\tau \in \mathcal{T}}\; \mathbb{E}\left[\sum_{t=0}^{\tau\wedge \tau_D^X-1} f_t\left(X_t, U_t, m_t(U_t)\right) + g_{\tau\wedge \tau_D^X}\left(X_{\tau\wedge \tau_D^X}, U_{\tau\wedge \tau_D^X}, \mu_{\tau\wedge \tau_D^X}(U_{\tau\wedge \tau_D^X})\right)\right].
\end{equation}
\item Denoting by $\tau^{\mu, m}$ an optimal stopping time associated to $(\mu, m)$ (solution of the problem \eqref{opti strong}), find $(\mu, m)$ such that
$$m_t(u)(B) = \mathbb P[X_t\in B, t<\tau^{\mu, m}\wedge\tau_D^X|U_t=u], \quad B\in\mathcal{B}(D), \quad t\in I\setminus\{T\}, \quad u\in \Omega_t,$$
$$\mu_t(u)(B)=\mathbb P[X_t\in B, \tau^{\mu, m}\wedge\tau_D^X=t|U_t=u], \quad B\in \mathcal{B}(E), \quad t\in I,\quad u\in \Omega_t.$$
\end{enumerate}
\end{definition}

\begin{remark}
One could take $f_t$ to depend on $(m_s)_{s\leq t}$ and $g_t$ to depend on $(\mu_s)_{s\leq t}$. In that case, the proofs will be the same as the ones that we will present by observing that for each $s\leq t$, $U_s$ is a deterministic function of $U_t$. In order to simplify the notations, we have decided not to do so.
\end{remark}

Now, if we are given $\bar \mu_t:W\rightarrow \mathcal{P}^{sub}(E)$, $\bar m_t:W\rightarrow \mathcal{P}^{sub}(D)$ and some $\tau\in \mathcal{T}$, we can define the occupation measures $(\mu, m)$ associated to $\tau$ as in the above subsection and rewrite the objective function:
\begin{multline*}
\mathbb{E}\left[\sum_{t=0}^{\tau\wedge \tau_D^X-1} f_t\left(X_t, U_t, \bar m_t(U_t)\right) + g_{\tau\wedge \tau_D^X}\left(X_{\tau\wedge \tau_D^X}, U_{\tau\wedge \tau_D^X}, \bar \mu_{\tau\wedge \tau_D^X}(U_{\tau\wedge \tau_D^X})\right)\right] \\
= \sum_{t=0}^{T-1}\sum_{u\in \Omega_t}p_t(u) \int_D f_t\left(x, u, \bar m_t(u)\right)m_t(u)(dx) + \sum_{t=0}^T\sum_{u\in \Omega_t}p_t(u)\int_E g_t(x, u, \bar \mu_t(u))\mu_t(u)(dx).
\end{multline*}
Having in mind this writing of the objective function and the derivation of the constraint for the occupation measures, we are going to present the definition of an equilibrium under the linear programming formulation.\vspace{5pt}

\noindent To ease the notation, we define the sets $\mathcal{C}_\mu := \prod_{t\in I}\mathcal{P}^{sub}(E)^{\Omega_t}$ and $\mathcal{C}_m := \prod_{t\in I\setminus\{T\}}\mathcal{P}^{sub}(D)^{\Omega_t}$,
where we use $\mathcal{Y}^\mathcal{X}$ to denote the set of functions from $\mathcal{X}$ to $\mathcal{Y}$. Since the sets $\Omega_t$ with $t\in I$ are finite, we identify these functions with vectors.

\begin{definition}[\textit{Set of constraints}]\label{Set of constraints}
Let $\mathcal{R}$ be the set of pairs $(\mu, m)\in \mathcal{C}_\mu\times \mathcal{C}_m$ such that for all $\varphi\in C(I\times E\times W)$,

\begin{multline*}
\sum_{t=0}^T\sum_{u\in \Omega_t}p_t(u)\int_{E}\varphi(t, x, u) \mu_t(u)(dx) \\=\sum_{u\in \Omega_0}p_0(u)\int_{D} \varphi (0, x, u) m_0^*(u)(dx)
+ \sum_{t=0}^{T-1}\sum_{u\in \Omega_t}p_t(u)\int_{D}\mathcal{L}(\varphi)(t, x, u) m_t(u)(dx).
\end{multline*}
\end{definition}


\begin{definition}[\textit{LP optimization criteria}]
For $(\bar \mu, \bar m)\in \mathcal{R}$, let $\Gamma[\bar \mu, \bar m]: \mathcal{R}\rightarrow \mathbb R$ be the reward functional associated to $(\bar \mu, \bar m)$, defined by
\begin{multline*}
\Gamma[\bar \mu, \bar m] (\mu, m)=\sum_{t=0}^{T-1}\sum_{u\in \Omega_t}p_t(u) \int_D f_t\left(x, u, \bar m_t(u)\right)m_t(u)(dx)
\\+ \sum_{t=0}^T\sum_{u\in \Omega_t}p_t(u)\int_E g_t(x, u, \bar \mu_t(u))\mu_t(u)(dx).
\end{multline*}
\end{definition}

\begin{definition}[\textit{LP MFG Nash equilibrium}]
We say that $(\mu^\star, m^\star)\in \mathcal{R}$ is an LP MFG Nash equilibrium if for all $(\mu, m)\in \mathcal{R}$, $\Gamma[\mu^\star, m^\star] (\mu, m)\leq \Gamma[\mu^\star, m^\star] (\mu^\star, m^\star)$.
\end{definition}

The set of measures $\mathcal R$ is larger than the set of occupation measures defined by \eqref{occup1}--\eqref{occup2}. However, the following theorem provides a probabilistic interpretation of the measures in $\mathcal{R}$ as \emph{occupation measures associated to randomized stopping times} in the sense of \cite{carmona2017}, i.e. stopping times not necessarily with respect to the filtration of the underlying Markov chains, $X$ and $U$, but with respect to larger filtrations for which the Markov property of $(X, U)$ can be extended.

\begin{theorem}[\textit{Probabilistic representation with randomized stopping times}]\label{proba rep U}
If $(\mu, m)\in \mathcal{R}$, then there exists $(\overline{\Omega}, \overline{\mathcal{F}}, \overline{\mathbb F}, \overline{\mathbb P}, \overline{\tau}, \overline{X}, \overline{U})$ verifying
\begin{enumerate}[(1)]
\item $(\overline{\Omega}, \overline{\mathcal{F}}, \overline{\mathbb P})$ is a complete probability space endowed with a complete filtration $\overline{\mathbb F}$ and supporting the random variables $(\overline{\tau}, \overline{X}, \overline{U})$.
\item $(\overline{X}, \overline{U})$ is an $\overline{\mathbb F}$-Markov chain valued in $E\times W$ with transition kernels $(\pi_t)_{t\in I^*}$ and initial distribution $m_0^*(u)(dx)p_0(u)$.
\item $\overline{\tau}$ is an $\overline{\mathbb F}$-stopping time valued in $I$ such that $\overline{\tau}\leq \tau_D^{\overline{X}}$.
\item The measures have the following representation:
$$m_t(u)(B)=\overline{\mathbb P}[\overline{X}_t\in B, t<\overline{\tau}|\overline{U}_t = u], \quad B\in \mathcal{B}(D),\quad t\in I\setminus\{T\}, \quad u\in \Omega_t,$$
$$\mu_t(u)(B):=\overline{\mathbb P}[\overline{X}_t\in B, \overline{\tau} = t|\overline{U}_t=u], \quad B\in \mathcal{B}(E), \quad t\in I,\quad u\in \Omega_t.$$
\end{enumerate}
\end{theorem}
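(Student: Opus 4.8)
The plan is to disaggregate the single global linear programming constraint into a family of time-local measure identities, and then to realize these identities as the survival/stopping dynamics of the prescribed Markov chain equipped with an independent randomization. Throughout, I would work with the (finite, positive) measures on $E\times W$ defined by $\rho_t(dx,\{u\}):=p_t(u)\,m_t(u)(dx)$ for $t\in I\setminus\{T\}$ and $\eta_t(dx,\{u\}):=p_t(u)\,\mu_t(u)(dx)$ for $t\in I$, and write $\pi_t^{*}$ for the forward operator $(\pi_t^{*}\nu)(A):=\int_{E\times W}\pi_t(x,u;A)\,\nu(dx,du)$ acting on such measures, and $(\pi_t\psi)(x,u):=\int_{E\times W}\psi\,d\pi_t(x,u;\cdot)$ for the kernel applied to a function.

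First I would test the constraint defining $\mathcal{R}$ against functions of the form $\varphi(s,x,u)=\mathds{1}_{s=t}\,\psi(x,u)$ with $\psi\in C(E\times W)$. Using that each $\pi_{s+1}$ is a probability kernel, $\mathcal{L}(\varphi)(s,x,u)$ collapses to $\mathds{1}_{s=t-1}(\pi_t\psi)(x,u)-\mathds{1}_{s=t}\psi(x,u)$, so the constraint reads, for every $\psi$, as $\int\psi\,d\eta_t=\int(\pi_t\psi)\,d\rho_{t-1}-\int\psi\,d\rho_t$ for $1\le t\le T-1$, as $\int\psi\,d\eta_0=\int\psi(x,U_0)\,m_0^*(dx)-\int\psi\,d\rho_0$ at $t=0$, and as $\int\psi\,d\eta_T=\int(\pi_T\psi)\,d\rho_{T-1}$ at $t=T$. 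Since $E\times W$ is compact metric, $C(E\times W)$ separates finite Borel measures, so these become genuine measure identities:
\[
\rho_0+\eta_0=m_0^*\otimes\delta_{U_0},\qquad \pi_t^{*}\rho_{t-1}=\rho_t+\eta_t\ (1\le t\le T-1),\qquad \pi_T^{*}\rho_{T-1}=\eta_T.
\]
Because $\eta_t\ge 0$, each identity exhibits $\rho_t$ and $\eta_t$ as the two nonnegative pieces of the same total mass $\pi_t^{*}\rho_{t-1}$ (resp.\ $m_0^*\otimes\delta_{U_0}$ at $t=0$); in particular $\eta_t\le\pi_t^{*}\rho_{t-1}$.

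Next I would define a Markovian hazard field by Radon--Nikodym differentiation: choose versions $h_t:=d\eta_t/d(\pi_t^{*}\rho_{t-1})\in[0,1]$ for $1\le t\le T$ and $h_0:=d\eta_0/d(m_0^*\otimes\delta_{U_0})\in[0,1]$. Two facts drive the construction. Since $\rho_t$ is carried by $D\times W$ (as $m_t$ is a subprobability on $D$), the identity $\pi_t^{*}\rho_{t-1}=\rho_t+\eta_t$ forces $h_t=1$ off $D\times W$, which will yield $\overline\tau\le\tau_D^{\overline X}$; and the terminal identity $\pi_T^{*}\rho_{T-1}=\eta_T$ forces $h_T=1$, guaranteeing that $\overline\tau$ is $I$-valued. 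I would then take for $(\overline X,\overline U)$ the canonical Markov chain with kernels $(\pi_t)_{t\in I^*}$ and initial law $m_0^*\otimes\delta_{U_0}$, enlarge the space by an independent family $(\Theta_t)_{t\in I}$ of uniform variables on $[0,1]$, complete everything, and let $\overline{\mathbb F}$ be the completed natural filtration of $(\overline X,\overline U,\Theta)$. Independence ensures that $(\overline X,\overline U)$ remains an $\overline{\mathbb F}$-Markov chain with the prescribed kernels and initial law (the immersion property), and that $\overline\tau:=\inf\{t\in I:\Theta_t\le h_t(\overline X_t,\overline U_t)\}$ is an $\overline{\mathbb F}$-stopping time with $\overline\tau\le\tau_D^{\overline X}$.

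Finally I would verify item (4) by induction on $t$, proving the pair of identities $\overline{\mathbb P}[(\overline X_t,\overline U_t)\in\cdot,\ \overline\tau>t]=\rho_t$ and $\overline{\mathbb P}[(\overline X_t,\overline U_t)\in\cdot,\ \overline\tau=t]=\eta_t$. The base case at $t=0$ follows from the definition of $h_0$ and $\rho_0+\eta_0=m_0^*\otimes\delta_{U_0}$. For the inductive step, the event $\{\overline\tau>t-1\}$ carries law $\rho_{t-1}$ for $(\overline X_{t-1},\overline U_{t-1})$; by the Markov property and the independence of $\Theta_t$, the mass surviving to time $t$ is distributed as $\pi_t^{*}\rho_{t-1}$, and the test $\Theta_t\le h_t(\overline X_t,\overline U_t)$ splits it into the stopping part $h_t\cdot\pi_t^{*}\rho_{t-1}=\eta_t$ and the continuation part $(1-h_t)\cdot\pi_t^{*}\rho_{t-1}=\rho_t$. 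Dividing these identities by $\overline{\mathbb P}[\overline U_t=u]=p_t(u)>0$ for $u\in\Omega_t$ (the marginal law of $\overline U$ equals that of $U$) recovers exactly $m_t(u)$ and $\mu_t(u)$. I expect the main obstacle to be not the induction, which is routine, but the construction of the hazard field together with the enlargement: one must check that the Radon--Nikodym versions can be taken in $[0,1]$ with the correct boundary and terminal behaviour, and that appending the independent randomization $(\Theta_t)$ genuinely preserves the Markov kernels of $(\overline X,\overline U)$ while making $\overline\tau$ a bona fide $\overline{\mathbb F}$-stopping time dominated by $\tau_D^{\overline X}$.
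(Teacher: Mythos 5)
Your proposal is correct, but it takes a genuinely different route from the paper's. The paper proves the theorem by the same initial aggregation (packaging $p_t(u)m_t(u)$ and $p_t(u)\mu_t(u)$ into measures $\bar m_t,\bar\mu_t$ on $E\times W$ satisfying the unconditional LP constraint), but then invokes the abstract representation result of Appendix A (Theorem \ref{proba rep}), whose proof is by convex duality: one shows the equality of the strong and LP values $V^S=V^{LP}$ via the Snell envelope and dynamic programming (using the continuity of the kernels, Assumption \ref{assump existence}(3), to get a continuous value function that can be used as a test function), then applies Hahn--Banach separation to conclude $\mathcal{R}=\overline{\text{conv}}(\mathcal{R}_0)=\mathcal{R}_1$, and finally disintegrates elements of $\mathcal{A}_1$ into randomized stopping kernels; the domination $\overline\tau\le\tau_D^{\overline X}$ is obtained a posteriori by checking $\overline{\mathbb P}(\tau_D^{\overline X}<\tilde\tau)=0$ from $\bar m_s(D^c\times W)=0$ and setting $\overline\tau:=\tilde\tau\wedge\tau_D^{\overline X}$. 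Your argument is instead direct and constructive: localizing the constraint in time with test functions $\varphi(s,x,u)=\mathds{1}_{s=t}\psi(x,u)$ (continuous, since $I$ and $W$ are discrete) yields the forward identities $\rho_0+\eta_0=m_0^*\otimes\delta_{U_0}$, $\pi_t^{*}\rho_{t-1}=\rho_t+\eta_t$, $\pi_T^{*}\rho_{T-1}=\eta_T$, and the hazard densities $h_t\in[0,1]$ together with independent uniforms realize the stopping, with the representation checked by a routine induction; your version choice $h_t\equiv 1$ off $D\times W$ and $h_T\equiv 1$ is legitimate (the density equals $1$ a.e.\ there, and versions may be modified off the support) and builds $\overline\tau\le\tau_D^{\overline X}$ directly into the construction. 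What each approach buys: yours avoids duality and the Snell envelope entirely and in particular never uses the continuity of the kernels --- only that $C(E\times W)$ separates finite Borel measures on a compact metric space --- and it produces a strictly sharper object, namely a \emph{feedback} (Markovian) randomization with explicit kernel $\kappa(x,u;\{t\})=h_t(x_t,u_t)\prod_{s<t}\left(1-h_s(x_s,u_s)\right)$, which would in fact directly furnish the kernel $\kappa^\star\in\mathcal{K}^{X,U}$ needed later for the $\varepsilon$-Nash construction. The paper's route, in exchange, establishes structural facts reused elsewhere ($\mathcal{R}$ is the closed convex hull of the measures induced by strict Markovian stopping times, and $V^S=V^{LP}$), which your hazard construction does not give. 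One step you should make explicit: dividing by $p_t(u)$ at the end requires $\overline{\mathbb P}(\overline U_t=u)=p_t(u)$ for $u\in\Omega_t$, which holds because Assumption \ref{assump existence}(2) makes the $u$-marginal of $\pi_t$ independent of $x$, so $\overline U$ is a Markov chain with the same kernels and initial law as $U$.
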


\begin{proof}
For $B\in \mathcal{B}(E)$ and $u\in W$, define the following measures on $E$
$$\bar m_0(B\times \{u\}):=p_0(u)m_0^*(u)(B\cap D),$$
$$
\bar m_t(B\times \{u\}):=
\begin{cases}
p_t(u)m_t(u)(B\cap D), \quad &\text{if} \quad u\in \Omega_t\\
0, \quad &\text{if} \quad u\notin \Omega_t,
\end{cases}
\quad\quad\quad\quad t\in I\setminus\{T\},
$$
$$
\bar \mu_t(B\times \{u\}):=
\begin{cases}
p_t(u)\mu_t(u)(B), \quad &\text{if} \quad u\in \Omega_t\\
0, \quad &\text{if} \quad u\notin \Omega_t,
\end{cases}
\quad\quad\quad\quad t\in I.
$$
These measures satisfy for all $\varphi\in C(I\times E\times W)$,
\begin{multline*}
\sum_{t=0}^T\int_{E\times W}\varphi(t, x, u) \bar \mu_t(dx, du) \\=\int_{E\times W} \varphi(0, x,u)\bar m_0(dx, du) + \sum_{t=0}^{T-1}\int_{E\times W}\mathcal{L}(\varphi)(t, x, u) \bar m_t(dx, du).
\end{multline*}
By Theorem \ref{proba rep}, there exists $(\overline{\Omega}, \overline{\mathcal{F}}, \overline{\mathbb F}, \overline{\mathbb P}, \tilde \tau, \overline{X}, \overline{U})$ verifying
\begin{enumerate}[(1)]
\item $(\overline{\Omega}, \overline{\mathcal{F}}, \overline{\mathbb P})$ is a complete probability space endowed with a complete filtration $\overline{\mathbb F}$ and supporting the random variables $(\tilde\tau, \overline{X}, \overline{U})$.
\item $(\overline{X}, \overline{U})$ is an $\overline{\mathbb F}$-Markov chain valued in $E\times W$ with transition kernels $(\pi_t)_{t\in I^*}$ and initial distribution $\bar m_0$.
\item $\tilde \tau$ is an $\overline{\mathbb F}$-stopping time valued in $I$.
\item The measures have the following representation:
$$\bar m_t(B\times \{u\})=\overline{\mathbb P}[\overline{X}_t\in B, \overline{U}_t = u, t<\tilde \tau], \quad B\in \mathcal{B}(E), \quad u\in W, \quad t\in I\setminus\{T\},$$
$$\bar \mu_t(B\times \{u\}):=\overline{\mathbb P}[\overline{X}_t\in B, \overline{U}_t = u, \tilde \tau = t], \quad B\in \mathcal{B}(E), \quad u\in W, \quad t\in I.$$
\end{enumerate}
Now, we have that
\begin{align*}
\overline{\mathbb P}(\tau_D^{\overline{X}}<\tilde \tau)&=\overline{\mathbb P}(\cup_{t=0}^{T-1}\{\tau_D^{\overline{X}}\leq t<\tilde\tau\})\leq \sum_{t=0}^{T-1}\overline{\mathbb P}((\cup_{s=0}^t\{\overline{X}_s\in D^c\})\cap\{t<\tilde\tau\})\\
&\quad \leq \sum_{t=0}^{T-1}\sum_{s=0}^t\overline{\mathbb P}(\overline{X}_s\in D^c, t<\tilde\tau)\leq \sum_{t=0}^{T-1}\sum_{s=0}^t \bar m_s(D^c\times W)=0.
\end{align*}
This computation allows to define the $\overline{\mathbb F}$-stopping time $\overline{\tau}:=\tilde\tau\wedge\tau_D^{\overline{X}}$, which is equal to $\tilde \tau$ $\overline{\mathbb P}$-a.s. It is straightforward to show that
$$m_t(u)(B)=\overline{\mathbb P}[\overline{X}_t\in B, t<\overline{\tau}|\overline{U}_t = u], \quad B\in \mathcal{B}(D),\quad t\in I\setminus\{T\}, \quad u\in \Omega_t,$$
$$\mu_t(u)(B):=\overline{\mathbb P}[\overline{X}_t\in B, \overline{\tau} = t|\overline{U}_t=u], \quad B\in \mathcal{B}(E), \quad t\in I,\quad u\in \Omega_t.$$

\end{proof}

\subsection{Existence of LP MFG Nash equilibria}

In order to show the existence of an LP MFG Nash equilibrium we first show the compactness of the set of admissible measures and then we show that there exists a fixed point to the best response map via Kakutani-Fan-Glicksberg's fixed point theorem.

\paragraph{Topology.} We say that $(m^n)_{n\geq 1}$ converges to $m$ in $\mathcal{C}_m$ if for all $t\in I\setminus\{T\}$ and $u\in \Omega_t$, $(m^n_t(u))_{n\geq 1}$ converges to $m_t(u)$ weakly. We say that $(\mu^n)_{n\geq 1}$ converges to $\mu$ in $\mathcal{C}_\mu$ if for all $t\in I$ and $u\in \Omega_t$, $(\mu^n_t(u))_{n\geq 1}$ converges to $\mu_t(u)$ weakly. We recall that for a compact metric space $K$, the set $\mathcal{P}^{sub}(K)$ is compact for the topology of weak convergence.

\begin{theorem}\label{compact discrete}
The set $\mathcal{R}$ is compact.
\end{theorem}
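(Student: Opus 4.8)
The plan is to exhibit $\mathcal{R}$ as a closed subset of the ambient space $\mathcal{C}_\mu\times\mathcal{C}_m$, to argue that this ambient space is compact, and then to conclude since a closed subset of a compact space is compact.

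First I would establish the compactness of $\mathcal{C}_\mu\times\mathcal{C}_m$. Because $E$ and $D$ are compact metric spaces, the fact recalled just before the statement gives that $\mathcal{P}^{sub}(E)$ and $\mathcal{P}^{sub}(D)$ are compact (and metrizable) for the topology of weak convergence. Each $\Omega_t$ is finite and $I$ is finite, so $\mathcal{C}_\mu=\prod_{t\in I}\mathcal{P}^{sub}(E)^{\Omega_t}$ and $\mathcal{C}_m=\prod_{t\in I\setminus\{T\}}\mathcal{P}^{sub}(D)^{\Omega_t}$ are \emph{finite} products of compact spaces, hence compact, and their product topology is exactly the coordinatewise weak-convergence topology introduced in the Topology paragraph.

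Next I would show that $\mathcal{R}$ is closed. For each fixed $\varphi\in C(I\times E\times W)$, let $F_\varphi$ denote the functional sending $(\mu,m)$ to the difference of the left- and right-hand sides of the constraint defining $\mathcal{R}$. Then $\mathcal{R}=\bigcap_{\varphi}F_\varphi^{-1}(\{0\})$, so it suffices to prove that each $F_\varphi$ is continuous on $\mathcal{C}_\mu\times\mathcal{C}_m$. Since the sums over $t\in I$ and over $u\in\Omega_t$ are finite and the weights $p_t(u)$ are constants, continuity of $F_\varphi$ reduces to the continuity, for each fixed $(t,u)$, of the two maps
$$\mu_t(u)\mapsto \int_E \varphi(t,x,u)\,\mu_t(u)(dx),\qquad m_t(u)\mapsto \int_D \mathcal{L}(\varphi)(t,x,u)\,m_t(u)(dx).$$
The first map is continuous for weak convergence because $x\mapsto\varphi(t,x,u)$ is continuous and bounded on the compact set $E$. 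For the second I must check that $x\mapsto\mathcal{L}(\varphi)(t,x,u)$ is continuous and bounded on $D$; writing $\mathcal{L}(\varphi)(t,x,u)=\int_{E\times W}\varphi(t+1,x',u')\,\pi_{t+1}(x,u;dx',du')-\varphi(t,x,u)$, the integral term is continuous in $x$ as the pairing of the bounded continuous integrand $\varphi(t+1,\cdot,\cdot)$ with the weakly continuous kernel $\pi_{t+1}(\cdot,u;\cdot)$ granted by Assumption \ref{assump existence}(3), while $\varphi(t,\cdot,u)$ is itself continuous; boundedness is automatic by compactness. Restricting this continuous function to $D$ and using weak convergence $m^n_t(u)\to m_t(u)$ then yields continuity of the second map as well, so $F_\varphi$ is continuous and $\mathcal{R}$ is closed.

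I expect the only genuinely nontrivial point to be the continuity of $x\mapsto\mathcal{L}(\varphi)(t,x,u)$, which is exactly what the continuity of the transition kernel in Assumption \ref{assump existence}(3) is designed to supply; everything else is routine bookkeeping about finite products and integration of bounded continuous functions against weakly convergent subprobability measures on compact spaces.
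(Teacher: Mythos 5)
Your proposal is correct and follows essentially the same route as the paper: compactness of the ambient product $\mathcal{C}_\mu\times\mathcal{C}_m$ from compactness of $\mathcal{P}^{sub}(E)$ and $\mathcal{P}^{sub}(D)$, plus closedness of $\mathcal{R}$ via continuity of the constraint functionals, with the only nontrivial point being the continuity of $x\mapsto\mathcal{L}(\varphi)(t,x,u)$ supplied by Assumption~\ref{assump existence}(3). Your phrasing of closedness as $\mathcal{R}=\bigcap_\varphi F_\varphi^{-1}(\{0\})$ is a cosmetic repackaging of the paper's sequential argument (the spaces are metrizable, so the two are equivalent), and your treatment of the kernel-continuity step is, if anything, slightly more explicit than the paper's.
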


\begin{proof}
Relative compactness follows since $\mathcal{P}^{sub}(E)$ and $\mathcal{P}^{sub}(D)$ are compact and henceforth $\mathcal{C}_\mu$ and $\mathcal{C}_m$ are also compact. Let us check that $\mathcal{R}$ is closed. Consider a sequence $(\mu^n, m^n)_{n\geq 1}\subset \mathcal{R}$ converging to some $(\mu, m)$ and let us show that $(\mu, m)\in \mathcal{R}$. For all $\varphi\in C(I\times E \times W)$ and $n\geq 1$,
\begin{multline*}
\sum_{t=0}^T\sum_{u\in \Omega_t}p_t(u)\int_{E}\varphi(t, x, u) \mu_t^n(u)(dx) \\=\sum_{u\in \Omega_0}p_0(u)\int_{D} \varphi (0, x, u) m_0^*(u)(dx)
+ \sum_{t=0}^{T-1}\sum_{u\in \Omega_t}p_t(u)\int_{D}\mathcal{L}(\varphi)(t, x, u) m_t^n(u)(dx).
\end{multline*}
Since for each $t\in I$ and $u\in \Omega_t$ the function $x\mapsto \varphi(t, x, u)$ is continuous and bounded we get that
$$\lim_{n\rightarrow\infty}\int_{E}\varphi(t, x, u) \mu_t^n(u)(dx) = \int_{E}\varphi(t, x, u) \mu_t(u)(dx).$$
Now using that for each $t\in I\setminus \{N\}$ and $u\in \Omega_t$ the function $x\mapsto \mathcal{L}(\varphi)(t, x, u)$ is continuous (by the continuity assumption on the transition kernel) and bounded, we have
$$\lim_{n\rightarrow\infty}\int_{D}\mathcal{L}(\varphi)(t, x, u) m_t^n(u)(dx) = \int_{D}\mathcal{L}(\varphi)(t, x, u) m_t(u)(dx).$$
We deduce that $(\mu, m)\in \mathcal{R}$.

\end{proof}

\noindent In order to find an equilibrium we define the following best response set valued mapping.

\begin{definition}
Define the set valued mapping $\Theta:\mathcal{R} \rightarrow 2^{\mathcal{R}}$ as 
$$\Theta(\bar\mu,\bar m)=\underset{(\mu,  m) \in \mathcal{R}}{\arg \max } \;\Gamma [\bar \mu, \bar m]( \mu,  m).$$
\end{definition}

\begin{remark}
Note that the set of LP MFG Nash equilibria coincides with the set of fixed points of $\Theta$.
\end{remark}

\noindent Before stating the existence theorem, we prove a preliminary lemma concerning the continuity of the reward functional.

\begin{lemma}\label{cont reward}
The map $\mathcal{R}^2\ni((\bar\mu, \bar m), (\mu, m))\mapsto \Gamma [\bar \mu, \bar m]( \mu,  m)\in \mathbb R$ is continuous.
\end{lemma}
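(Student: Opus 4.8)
The plan is to exploit the finiteness of the index sets to reduce the claim to the continuity of a single summand, and then to handle the simultaneous dependence on the two measure arguments by a standard add-and-subtract decomposition. Since $I$ and each $\Omega_t$ are finite and the weights $p_t(u)$ are fixed constants, $\Gamma[\bar\mu,\bar m](\mu,m)$ is a finite linear combination of terms of the form $\int_D f_t(x,u,\bar m_t(u))\,m_t(u)(dx)$ and $\int_E g_t(x,u,\bar\mu_t(u))\,\mu_t(u)(dx)$. A finite sum of continuous maps is continuous, so it suffices to prove that each such term is continuous in $((\bar\mu,\bar m),(\mu,m))$. I treat the $f_t$ term; the $g_t$ term is identical after replacing $(D,m,\bar m)$ by $(E,\mu,\bar\mu)$.

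Fix $t$ and $u$, and take a sequence $((\bar\mu^n,\bar m^n),(\mu^n,m^n))\to((\bar\mu,\bar m),(\mu,m))$ in $\mathcal R^2$. By the definition of the topology on $\mathcal C_m$, this gives $\bar m^n_t(u)\to\bar m_t(u)$ and $m^n_t(u)\to m_t(u)$ weakly. I bound
$$\Big|\int_D f_t(x,u,\bar m^n_t(u))\,m^n_t(u)(dx)-\int_D f_t(x,u,\bar m_t(u))\,m_t(u)(dx)\Big|\le A_n+B_n,$$
where
$$A_n:=\int_D\big|f_t(x,u,\bar m^n_t(u))-f_t(x,u,\bar m_t(u))\big|\,m^n_t(u)(dx),\qquad B_n:=\Big|\int_D f_t(x,u,\bar m_t(u))\,(m^n_t(u)-m_t(u))(dx)\Big|.$$
For $B_n$, the integrand $x\mapsto f_t(x,u,\bar m_t(u))$ is a fixed continuous bounded function on the compact set $D$ (continuity by Assumption \ref{assump existence}(4), boundedness by compactness), so $B_n\to 0$ directly from the weak convergence $m^n_t(u)\to m_t(u)$. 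For $A_n$, since $D$ is compact, $W$ is finite and $\mathcal{P}^{sub}(D)$ is compact for the weak topology, the function $f_t$ is uniformly continuous on $D\times\{u\}\times\mathcal{P}^{sub}(D)$; hence $\bar m^n_t(u)\to\bar m_t(u)$ forces $\sup_{x\in D}|f_t(x,u,\bar m^n_t(u))-f_t(x,u,\bar m_t(u))|\to 0$. Because each $m^n_t(u)$ is a subprobability measure of total mass at most $1$, this yields $A_n\le\sup_{x\in D}|f_t(x,u,\bar m^n_t(u))-f_t(x,u,\bar m_t(u))|\to 0$. Thus each term converges and continuity of $\Gamma$ follows.

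The one delicate point, which the decomposition above is designed to isolate, is that both the measure appearing in the third slot of $f_t$ and the integrating measure vary with $n$ at the same time, so one cannot appeal to weak convergence alone. The resolution is to pass the varying third argument through uniform continuity (using compactness of $D$, finiteness of $W$, and compactness of $\mathcal{P}^{sub}(D)$) to get uniform-in-$x$ convergence of the integrands, while reserving the definition of weak convergence for the term in which the integrand is held fixed; the subprobability mass bound $m^n_t(u)(D)\le 1$ then keeps the first term controlled.
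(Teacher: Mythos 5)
Your proof is correct and takes essentially the same route as the paper: after reducing, via finiteness of $I$ and $\Omega_t$ and linearity of $\Gamma$, to the convergence of each term $\int_D f_t(x,u,\bar m^n_t(u))\,m^n_t(u)(dx)$ (and its $g_t$ analogue), the paper simply invokes Lemma F.1 of \cite{dlt2021} for exactly this joint convergence in both measure slots, whereas you prove that step inline by the add-and-subtract decomposition together with uniform continuity of $f_t$ on the compact product $D\times\{u\}\times\mathcal{P}^{sub}(D)$. Your argument is sound --- metrizability of $\mathcal{P}^{sub}(D)$ under weak convergence (valid since $D$ is compact metric) is what licenses both the sequential criterion and the uniform-continuity step, and the mass bound $m^n_t(u)(D)\le 1$ closes the estimate --- so your write-up is a self-contained substitute for the citation.
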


\begin{proof}
Let $(\bar \mu^n, \bar m^n)_{n\geq 1}\subset \mathcal{R}$ and $(\mu^n, m^n)_{n\geq 1}\subset \mathcal{R}$ be two sequences converging to $(\bar \mu, \bar m)\in \mathcal{R}$ and $(\mu, m)\in \mathcal{R}$ respectively. Now, applying Lemma F.1 in \cite{dlt2021}, we get for all $t\in I\setminus \{T\}$ and $u\in \Omega_t$,
$$\int_{D}f_t(x, u, \bar m_t^n(u))m_t^n(u)(dx)\underset{n\rightarrow\infty}{\longrightarrow}\int_{D}f_t(x, u, \bar m_t(u))m_t(u)(dx),$$
and for all $t\in I$ and $u\in \Omega_t$,
$$\int_{E}g_t(x, u, \bar \mu^n_t(u))\mu_t^n(u)(dx)\underset{n\rightarrow\infty}{\longrightarrow}\int_{E}g_t(x, u, \bar \mu_t(u))\mu_t(u)(dx).$$
Since the reward functional is a linear combination of these quantities over $t$ and $u$, the limit is straightforward.

\end{proof}

\begin{theorem}\label{existence 1 player}
There exists an LP MFG Nash equilibrium.
\end{theorem}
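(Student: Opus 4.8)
The plan is to establish existence of an LP MFG Nash equilibrium by applying the Kakutani-Fan-Glicksberg fixed-point theorem to the best response set-valued mapping $\Theta:\mathcal{R}\rightarrow 2^{\mathcal{R}}$, whose fixed points coincide with the Nash equilibria by the preceding remark. The theorem requires that $\mathcal{R}$ be a nonempty, compact, convex subset of a locally convex topological vector space, and that $\Theta$ have nonempty, closed, convex values and a closed graph (equivalently, be upper hemicontinuous with closed values).

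First I would verify the structural properties of the domain. The ambient space is the product $\mathcal{C}_\mu\times\mathcal{C}_m=\prod_{t\in I}\mathcal{P}^{sub}(E)^{\Omega_t}\times\prod_{t\in I\setminus\{T\}}\mathcal{P}^{sub}(D)^{\Omega_t}$, which embeds in the locally convex space $\prod_t \mathcal{M}^s(E)^{\Omega_t}\times\prod_t\mathcal{M}^s(D)^{\Omega_t}$ equipped with the product of weak topologies. Compactness of $\mathcal{R}$ is exactly Theorem \ref{compact discrete}. Convexity of $\mathcal{R}$ follows immediately because the defining constraint is a family of linear equalities in $(\mu,m)$, so any convex combination of elements of $\mathcal{R}$ again satisfies them. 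Nonemptiness of $\mathcal{R}$ follows from the derivation of the constraint: the occupation measures associated to any fixed $\mathbb{F}$-stopping time (for instance $\tau\equiv T$) produce an element of $\mathcal{R}$.

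Next I would analyze the best response map itself. For fixed $(\bar\mu,\bar m)$, the functional $(\mu,m)\mapsto\Gamma[\bar\mu,\bar m](\mu,m)$ is \emph{linear} in $(\mu,m)$, being an integral of fixed functions against these measures. Maximizing a continuous linear functional over the nonempty compact convex set $\mathcal{R}$ yields a set $\Theta(\bar\mu,\bar m)$ that is nonempty (by compactness and continuity, using Lemma \ref{cont reward} which gives continuity in the second argument), and convex and closed (as the argmax set of a linear functional on a convex compact set, it is a closed face). For the closed graph property, I would take a sequence $((\bar\mu^n,\bar m^n),(\mu^n,m^n))$ with $(\mu^n,m^n)\in\Theta(\bar\mu^n,\bar m^n)$ converging to $((\bar\mu,\bar m),(\mu,m))$ in $\mathcal{R}^2$, and show $(\mu,m)\in\Theta(\bar\mu,\bar m)$. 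By the optimality of $(\mu^n,m^n)$, for every admissible $(\nu,\ell)\in\mathcal{R}$ we have $\Gamma[\bar\mu^n,\bar m^n](\nu,\ell)\leq\Gamma[\bar\mu^n,\bar m^n](\mu^n,m^n)$; passing to the limit using the joint continuity from Lemma \ref{cont reward} gives $\Gamma[\bar\mu,\bar m](\nu,\ell)\leq\Gamma[\bar\mu,\bar m](\mu,m)$ for all $(\nu,\ell)\in\mathcal{R}$, i.e. $(\mu,m)\in\Theta(\bar\mu,\bar m)$.

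The main obstacle, and the reason the preparatory lemmas matter, is the \emph{joint} continuity of the reward functional in both arguments simultaneously, which is precisely what Lemma \ref{cont reward} supplies; the subtlety there lies in the dependence of $f_t$ and $g_t$ on the measure arguments $\bar m_t(u)$ and $\bar\mu_t(u)$ through the weak topology, handled via the continuity Assumption \ref{assump existence}(4) and the convergence result (Lemma F.1 in \cite{dlt2021}) for integrals of continuous functions whose integrand also varies weakly. With compactness, convexity and nonemptiness of $\mathcal{R}$ established, and with $\Theta$ shown to have nonempty closed convex values and a closed graph, the Kakutani-Fan-Glicksberg theorem guarantees a fixed point $(\mu^\star,m^\star)\in\Theta(\mu^\star,m^\star)$, which is the desired LP MFG Nash equilibrium.
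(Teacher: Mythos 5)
Your proposal is correct and follows essentially the same route as the paper: both apply the Kakutani--Fan--Glicksberg theorem to the best-response map $\Theta$, using the compactness of $\mathcal{R}$ (Theorem \ref{compact discrete}) and the joint continuity of $\Gamma$ (Lemma \ref{cont reward}) to obtain nonempty convex values and the closed-graph property, with linearity of $\Gamma[\bar\mu,\bar m](\cdot,\cdot)$ giving convexity of the argmax. Your additional justifications --- nonemptiness of $\mathcal{R}$ via the occupation measures of $\tau\equiv T$, and the explicit passage to the limit in the optimality inequality for the closed graph (legitimate since $\mathcal{R}$ is metrizable, $E$ being compact metric) --- are details the paper leaves implicit, but they do not change the argument.
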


\begin{proof}
We aim to apply Kakutani-Fan-Glicksberg's Theorem, Corollary 17.55 in \cite{aliprantis2007}. First note that $\mathcal{R}$ is nonempty, compact (Theorem \ref{compact discrete}), convex and is included in 
$$\prod_{t\in I}\mathcal{M}^s(E)^{\Omega_t}\times\prod_{t\in I\setminus\{T\}} \mathcal{M}^s(D)^{\Omega_t},$$
which is a locally convex Hausdorff space (endowed with the product topology and the set of finite signed measures endowed with the topology of weak convergence). Moreover, $\Theta$ has nonempty values since $\mathcal{R}$ is compact and for each $(\bar \mu,  \bar m) \in \mathcal{R}$, the map $\mathcal{R}\ni(\mu, m)\mapsto \Gamma [\bar \mu, \bar m]( \mu,  m)\in \mathbb R$ is continuous on $\mathcal{R}$ by Lemma \ref{cont reward}. This map is also linear which gives that $\Theta$ has convex values. Again, by Lemma \ref{cont reward}, the map $\mathcal{R}^2\ni((\bar\mu, \bar m), (\mu, m))\mapsto \Gamma [\bar \mu, \bar m]( \mu,  m)\in \mathbb R$ is continuous on $\mathcal{R}^2$, which shows that $\Theta$ has closed graph. By Kakutani-Fan-Glicksberg's fixed point theorem we conclude that there exists an equilibrium.

\end{proof}

\subsection{Existence of approximate equilibria in the $N$-player game}

In this subsection, we show that an LP MFG Nash equilibrium can be used to obtain $\varepsilon$-Nash equilibria for games with finite number of players.\vspace{5pt}

Let $(\Omega, \mathcal{F}, \mathbb P)$ be a complete probability space and let $(\mathbb F^N)_{N\geq 1}$ be a sequence of complete filtrations modeling the information available to the players in each $N$-player game. Consider players with state processes $X^n=(X^n_t)_{t\in I}$, $n\in \mathbb N^*$. Let $Z$ be the common noise and let $U$ be constructed as before.

\begin{assumption}\label{assump n player}\leavevmode
\begin{enumerate}[(1)]
\item For each $N\geq 1$ and $n\in \{1, \ldots, N\}$, $(X^n, U)$ is an $\mathbb F^N$-Markov chain taking values in $E\times W$ with initial distribution $m_0^*(u)(dx)p_0(u)$ and transition kernels $(\pi_t)_{t\in I^*}$.

\item The sequence of random variables $(X^n)_{n\geq 1}$ (valued in $E^I$) is i.i.d. given $U$ (valued in $W^I$) in the sense of Definition \ref{def cond iid}.
\end{enumerate}
\end{assumption}

We fix $N\geq 1$ and introduce the concepts for the $N$-player game. Let $\mathcal{K}_N$ denote the set of transition kernels $\kappa$ from $\Omega$ to $I$ such that for all $t\in I$ and $B\in \sigma(\{0\}, \ldots, \{t\})$ the mapping $\omega\mapsto \kappa(\omega, B)$ is $\mathcal{F}_t^N$-measurable. We say that $\mathcal{K}_N$ is the set of randomized stopping times. A special class of randomized stopping times are the Markovian randomized stopping times (i.e. the randomness in the kernel comes from observing the underlying state process and common noise process). More precisely, denote by $\Omega_{X, U}:=(E\times W)^I$ the canonical space for the process $(X, U)$, which is endowed with the Borel $\sigma$-algebra, and define the filtration $\mathcal{F}_t^{X, U}:= \sigma(X_s, U_s: s\leq t)$, for $t\in I$. Let $\nu\in \mathcal{P}(\Omega_{X, U})$ be the unique law of the Markov chain with transition kernels $(\pi_t)_{t\in I^*}$ and initial law $m_0^*(u)(dx)p_0(u)$. We denote by $\mathbb F^{X, U, \nu}=(\mathcal{F}_t^{X, U, \nu})_{t\in I}$ the filtration $\mathcal{F}_t^{X, U, \nu}:=\mathcal{F}_t^{X, U}\vee \mathcal{N}_{\nu}(\mathcal{F}_T^{X, U})$. Define $\mathcal{K}^{X, U}$ as the set of transition kernels $\kappa$ from $\Omega_{X, U}$ to $I$ such that for all $t\in I$ and $B\in \sigma(\{0\}, \ldots, \{t\})$, the mapping $(x, u)\mapsto \kappa(x, u; B)$ is $\mathcal{F}_t^{X, U, \nu}$-measurable. 

\begin{lemma}
For all $n\in \{1, \ldots, N\}$ and all $\kappa\in \mathcal{K}^{X, U}$, $\kappa(X^n(\cdot), U(\cdot); \cdot)\in \mathcal{K}_N$. 
\end{lemma}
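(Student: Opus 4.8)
The plan is to realize $\kappa(X^n(\cdot), U(\cdot); \cdot)$ as the pullback of the canonical kernel $\kappa$ under the pathwise map $\Phi^n:\Omega\to\Omega_{X,U}$ defined by $\Phi^n(\omega):=(X^n_s(\omega), U_s(\omega))_{s\in I}$, and then to transfer the measurability of $\kappa$ on the $\nu$-completed canonical filtration into $\mathbb F^N$-measurability on $\Omega$ by controlling null sets. Two preliminary observations drive the argument. First, since by Assumption~\ref{assump n player}(1) the pair $(X^n, U)$ is $\mathbb F^N$-adapted, each coordinate $(X^n_s, U_s)$ with $s\leq t$ is $\mathcal{F}_t^N$-measurable, so $\Phi^n$ is measurable as a map from $(\Omega, \mathcal{F}_t^N)$ to $(\Omega_{X,U}, \mathcal{F}_t^{X,U})$ for every $t\in I$. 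Second, since $(X^n, U)$ is a Markov chain with transition kernels $(\pi_t)_{t\in I^*}$ and initial law $m_0^*\times\delta_{U_0}$, its finite-dimensional distributions over the finite index set $I$ are uniquely determined, so the pushforward $\Phi^n_*\mathbb P$ coincides with $\nu$.

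Next I would fix $t\in I$ and $B\in\sigma(\{0\},\ldots,\{t\})$ and set $\psi_B(x,u):=\kappa(x,u;B)$, which is $\mathcal{F}_t^{X,U,\nu}$-measurable by definition of $\mathcal{K}^{X,U}$. Because $\mathcal{F}_t^{X,U,\nu}=\mathcal{F}_t^{X,U}\vee\mathcal{N}_\nu(\mathcal{F}_T^{X,U})$ is the $\nu$-completion, there is an $\mathcal{F}_t^{X,U}$-measurable function $\tilde\psi_B$ with $\{\psi_B\neq\tilde\psi_B\}\subseteq N_B$ for some $N_B\in\mathcal{F}_T^{X,U}$ satisfying $\nu(N_B)=0$. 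Composing with $\Phi^n$, the function $\tilde\psi_B\circ\Phi^n$ is $\mathcal{F}_t^N$-measurable by the first observation, while $\psi_B\circ\Phi^n$ and $\tilde\psi_B\circ\Phi^n$ differ only on $(\Phi^n)^{-1}(N_B)$, which satisfies $\mathbb P((\Phi^n)^{-1}(N_B))=\nu(N_B)=0$ by the second observation. Since $\mathbb F^N$ is complete, $\mathcal{F}_t^N$ contains this $\mathbb P$-null set, and hence $\psi_B\circ\Phi^n=\kappa(X^n(\cdot), U(\cdot); B)$ is $\mathcal{F}_t^N$-measurable. Taking $t=T$, where $\sigma(\{0\},\ldots,\{T\})=2^I$, shows that $\omega\mapsto\kappa(X^n(\omega), U(\omega); B)$ is measurable for every $B\subseteq I$; combined with the fact that $\kappa(X^n(\omega), U(\omega); \cdot)$ is a probability measure on $I$ for each $\omega$ (as $\kappa$ is a kernel), this confirms that $\kappa(X^n(\cdot), U(\cdot); \cdot)$ is a genuine transition kernel from $\Omega$ to $I$, and the $\mathcal{F}_t^N$-measurability for $B\in\sigma(\{0\},\ldots,\{t\})$ places it in $\mathcal{K}_N$.

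The only delicate point is the interplay between the two completions: the canonical kernel is measurable only with respect to the $\nu$-completed filtration $\mathbb F^{X,U,\nu}$, whereas membership in $\mathcal{K}_N$ requires $\mathbb F^N$-measurability on $\Omega$. The bridge is precisely the identity $\Phi^n_*\mathbb P=\nu$, which guarantees that $\nu$-null sets on the canonical space pull back to $\mathbb P$-null sets on $\Omega$, together with the completeness of $\mathbb F^N$, which absorbs those null sets into each $\mathcal{F}_t^N$. Everything else reduces to a routine composition of measurable maps.
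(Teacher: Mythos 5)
Your proof is correct and follows essentially the same route as the paper's: the paper shows directly that $(X^n, U)$ is $\mathcal{F}_t^N/\mathcal{F}_t^{X,U,\nu}$-measurable by decomposing any $B\in\mathcal{F}_t^{X,U,\nu}$ into an $\mathcal{F}_t^{X,U}$-part and a $\nu$-null part, whose preimages lie in $\mathcal{F}_t^N$ by adaptedness and by completeness of $\mathbb{F}^N$ respectively, while you carry out the equivalent function-level step of selecting an $\mathcal{F}_t^{X,U}$-measurable version of $\kappa(\cdot,\cdot;B)$ and absorbing the discrepancy set. If anything, you are slightly more explicit on a point the paper leaves implicit, namely the identity $\mathbb{P}\circ(X^n,U)^{-1}=\nu$ (from Assumption~\ref{assump n player}(1) and uniqueness of the law of the Markov chain), which is exactly what turns $\nu$-null sets on the canonical space into $\mathbb{P}$-null sets on $\Omega$.
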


\begin{proof}
It suffices to show that for all $n\in \{1, \ldots, N\}$, $(X^n, U)$ is $\mathcal{F}_t^N/\mathcal{F}_t^{X, U, \nu}$-measurable for each $t\in I$. Take $B\in \mathcal{F}_t^{X, U, \nu}$. We can write $B=C\cup D$ with $C\in\mathcal{F}_t^{X, U}$ and $D\in \mathcal{N}_{\nu}(\mathcal{F}_T^{X, U})$. Now, we have that $(X^n, U)^{-1}(B)=(X^n, U)^{-1}(C)\cup (X^n, U)^{-1}(D)\in \mathcal{F}_t^N$. In fact, $(X^n, U)^{-1}(C)\in \mathcal{F}_t^N$ using that $(X^n, U)$ is $\mathbb F^N$-adapted, and $(X^n, U)^{-1}(D)\in \mathcal{N}_{\mathbb P}(\mathcal{F})$.

\end{proof}

We define the set of admissible strategies for the $N$-player game as the product set $\mathcal{K}_N^N$.
For an admissible strategy $\bm{\kappa}:=(\kappa^1, \ldots, \kappa^N)\in \mathcal{K}_N^N$, define the empirical measures
$$m^N_t[\bm{\kappa}](B)=\frac{1}{N}\sum_{n=1}^N\delta_{X^n_t}(B)\mathds{1}_{t<\tau_D^{X^n}}\kappa^n(\{t+1, \ldots, T\}), \quad B\in \mathcal{B}(D),\quad t\in I\setminus\{T\},$$
$$\mu^N_t[\bm{\kappa}](B)=\frac{1}{N}\sum_{n=1}^N\delta_{X^n_t}(B)\mathds{1}_{\tau_D^{X^n}=t}\kappa^n(\{t\}), \quad B\in \mathcal{B}(E),\quad t\in I.$$
Moreover, define the expected gain for player $n\in \{1, \ldots, N\}$ as 
\begin{multline*}J_n^N(\bm{\kappa}):=\mathbb E\Big[\sum_{t=0}^{T\wedge \tau_D^{X^n}-1}f_t(X^n_t, U_t, m^N_t[\bm{\kappa}])\kappa^n(\{t+1, \ldots, T\}) \\+ \sum_{t=0}^{T\wedge \tau_D^{X^n}}g_t(X^n_t, U_t, \mu^N_t[\bm{\kappa}])\kappa^n(\{t\})\Big].\end{multline*}
\begin{remark}
Let $\tau^1, \ldots, \tau^N$ be $\mathbb F^N$-stopping times and define $\kappa^n=\delta_{\tau^n}$. Then $\bm{\kappa}:=(\kappa^1, \ldots, \kappa^N)\in \mathcal{K}_N^N$, and we have:
$$m^N_t[\bm{\kappa}](B)=\frac{1}{N}\sum_{n=1}^N\delta_{X^n_t}(B)\mathds{1}_{t<\tau^n\wedge \tau_D^{X^n}}, \quad \mu^N_t[\bm{\kappa}](B)=\frac{1}{N}\sum_{n=1}^N\delta_{X^n_t}(B)\mathds{1}_{\tau^n\wedge \tau_D^{X^n}=t},$$
and,
$$J_n^N(\bm{\kappa}):=\mathbb E\left[\sum_{t=0}^{\tau^n\wedge \tau_D^{X^n}-1}f_t(X^n_t, U_t, m^N_t[\bm{\kappa}]) + g_{\tau^n\wedge \tau_D^{X^n}}\left(X^n_{\tau^n \wedge \tau_D^{X^n}},  U_{\tau^n \wedge \tau_D^{X^n}}, \mu^N_{\tau^n \wedge \tau_D^{X^n}}[\bm{\kappa}]\right)\right].$$
\end{remark}

\begin{definition}
Given $N\in \mathbb N^*$ and $\varepsilon\geq 0$, we say that $\bm{\kappa}:=(\kappa^1, \ldots, \kappa^N)\in \mathcal{K}_N^N$ is an $\varepsilon$-Nash equilibrium for the $N$-player game if $J^N_n(\bar\kappa, \bm{\kappa}^{-n})-\varepsilon\leq J^N_n(\bm{\kappa})$, for all $\bar\kappa\in \mathcal{K}_N$ and $n=1, \ldots, N$.
\end{definition}

Let $(\mu^\star, m^\star)$ be an LP MFG Nash equilibrium, which exists by Theorem \ref{existence 1 player}. Let $(\overline{\Omega}, \overline{\mathcal{F}}, \overline{\mathbb F}, \overline{\mathbb P}, \overline{\tau}, \overline{X}, \overline{U})$ be the associated probabilistic representation given by Theorem \ref{proba rep U}. The probability measure $P^\star:= \overline{\mathbb P}\circ (\overline{X}, \overline{U}, \overline{\tau})^{-1}\in \mathcal{P}(\Omega_{X, U}\times I)$ disintegrates as $P^\star(dx, du, d\theta)=\nu(dx, du)\kappa^\star(x, u;d\theta)$ for some $\kappa^\star\in \mathcal{K}^{X, U}$ (see e.g. Corollary \ref{coro disint}). Now define the Markovian randomized stopping times as $\kappa^n:=\kappa^\star(X^n(\cdot), U(\cdot); \cdot)$, for $n\geq 1$.\vspace{5pt}

\begin{theorem}
Let Assumptions \ref{assump existence} and \ref{assump n player} hold true. For any $\varepsilon>0$, there is some $N_0\geq 1$ such that if $N\geq N_0$, then $\bm{\kappa}=(\kappa^1, \ldots, \kappa^N)$ is an $\varepsilon$-Nash equilibrium for the $N$-player game.
\end{theorem}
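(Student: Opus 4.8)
The plan is to show that the Markovian randomized strategy $\bm{\kappa}$ built from the LP equilibrium is approximately optimal for each player by comparing the $N$-player gain $J_n^N$ with the mean-field functional $\Gamma[\mu^\star, m^\star]$ and exploiting two sources of approximation: a law-of-large-numbers effect for the empirical measures, and the optimality property of the LP equilibrium. First I would establish a \emph{propagation-of-chaos} type estimate: since the $(X^n)_{n\geq 1}$ are i.i.d.\ conditionally on $U$ (Assumption \ref{assump n player}(2)) and each $\kappa^n=\kappa^\star(X^n,U;\cdot)$ is a deterministic measurable function of $(X^n,U)$, the empirical measures $m_t^N[\bm{\kappa}](\cdot)$ and $\mu_t^N[\bm{\kappa}](\cdot)$ should converge, conditionally on $U$, to the conditional occupation measures $m_t^\star(U_t)$ and $\mu_t^\star(U_t)$ coming from the probabilistic representation of Theorem \ref{proba rep U}. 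The key point is that conditioning on $U$ reduces the common-noise problem to a genuinely i.i.d.\ setting in which a conditional strong law applies; one evaluates this through bounded continuous test functions and uses the continuity of $f_t,g_t$ (Assumption \ref{assump existence}(4)) together with dominated convergence to pass to the limit.

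The second step is to use this convergence to identify $\lim_{N\to\infty}J_n^N(\bm{\kappa})$ with the equilibrium value $\Gamma[\mu^\star,m^\star](\mu^\star,m^\star)$. Because the players are symmetric and exchangeable (conditionally i.i.d.), $J_n^N(\bm{\kappa})$ does not depend on $n$, and replacing the empirical measures by their limits inside $f_t$ and $g_t$ turns the expected gain into the mean-field reward of a single representative agent using the occupation measures $(\mu^\star,m^\star)$; the conditioning identity $\mathbb E[Y_t|\mathcal F^Z_t]=\mathbb E[Y_t|\mathcal F^Z_T]$ noted in the immersion discussion lets one rewrite the conditional expectations as the weighted sums over $u\in\Omega_t$ that define $\Gamma$. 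I would then handle a unilateral deviation: fix player $n$ and an arbitrary $\bar\kappa\in\mathcal K_N$, and note that changing a single player's strategy perturbs the empirical measures $m^N[\cdot],\mu^N[\cdot]$ only by $O(1/N)$ in the relevant bounded-Lipschitz sense, so the mean-field terms felt by player $n$ still converge to $m^\star_t(U_t),\mu^\star_t(U_t)$. Hence $\limsup_N J_n^N(\bar\kappa,\bm{\kappa}^{-n})$ is bounded above by the value of the \emph{single-agent} optimal stopping problem faced against the fixed equilibrium flow $(\mu^\star,m^\star)$, which by the equilibrium property (the best-response characterization encoded in $\Theta$ and Theorem \ref{proba rep U}) equals $\Gamma[\mu^\star,m^\star](\mu^\star,m^\star)$.

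Combining the two steps gives, for each $n$, $\limsup_N\big(J_n^N(\bar\kappa,\bm{\kappa}^{-n})-J_n^N(\bm{\kappa})\big)\leq 0$ uniformly over deviations, which yields the existence of $N_0$ such that the $\varepsilon$-Nash inequality $J_n^N(\bar\kappa,\bm{\kappa}^{-n})-\varepsilon\leq J_n^N(\bm{\kappa})$ holds for all $N\geq N_0$ and all $n$; a uniform choice of $N_0$ across $n$ follows again from exchangeability. The main obstacle I anticipate is the deviation step: one must argue that an \emph{arbitrary} $\bar\kappa\in\mathcal K_N$ — which may depend on the full $N$-player information $\mathbb F^N$, not just on $(X^n,U)$ — cannot beat the conditional single-agent value in the limit. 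This requires showing that the extra information carried by $\mathbb F^N$ beyond $(X^n,U)$ is asymptotically useless against the limiting (deterministic-in-$U$) mean field, i.e.\ that the best response to the fixed flow $(\mu^\star,m^\star)$ can be taken Markovian in $(X^n,U)$; this is precisely where the immersion property of $\mathbb F^U$ in $\mathbb F$ and the randomized-stopping-time representation of Theorem \ref{proba rep U} must be invoked to control the conditional law of $X^n$ given the enlarged filtration and to bound the deviation gain uniformly.
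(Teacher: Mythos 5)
Your proposal is correct in outline, and its first two steps coincide with the paper's proof: the paper also reduces to player $1$ by exchangeability, applies a conditional strong law of large numbers (its De Finetti-type Theorem \ref{clln}, using Assumption \ref{assump n player}(2) and the fact that $\kappa^n=\kappa^\star(X^n,U;\cdot)$ is a measurable function of $(X^n,U)$) to get $m^N_t[\bm{\kappa}]\rightharpoonup m^\star_t(U_t)$ and $\mu^N_t[\bm{\kappa}]\rightharpoonup\mu^\star_t(U_t)$ a.s., and concludes $J_1^N(\bm{\kappa})\to\Gamma[\mu^\star,m^\star](\mu^\star,m^\star)$ by dominated convergence, with the single-deviation insensitivity of the empirical measures handled exactly as you describe. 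Where you genuinely diverge is the deviation step, and here the paper's route is shorter than the one you sketch: rather than proving that an arbitrary $\bar\kappa\in\mathcal{K}_N$ (adapted to the full filtration $\mathbb F^N$) cannot beat a Markovian best response, the paper attaches to $\bar\kappa$ the conditional occupation measures $(\bar\mu,\bar m)$ of player $1$ and invokes Proposition \ref{Dynkin rand} (Dynkin's formula for randomized stopping times) to show $(\bar\mu,\bar m)\in\mathcal{R}$; this needs only that $(X^1,U)$ is an $\mathbb F^N$-Markov chain and that $\bar\kappa$ is adapted, with no Markovian-selection argument at all. Then $\lim_N J_1^N(\bar\kappa,\bm{\kappa}^{-1})=\Gamma[\mu^\star,m^\star](\bar\mu,\bar m)\leq\Gamma[\mu^\star,m^\star](\mu^\star,m^\star)$ follows directly from the definition of the LP MFG Nash equilibrium. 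In other words, the ``main obstacle'' you anticipate is precisely what the linear programming relaxation is designed to absorb: $\mathcal{R}$ already contains the occupation measures of \emph{every} full-information randomized deviation, so no reduction to strict or Markovian stopping times is needed at this point. Your alternative route (randomized-to-strict reduction as in \cite{edgar1982}, invariance of the optimal stopping value under the Markov-preserving enlargement to $\mathbb F^N$, then the inclusion $\mathcal{R}_0\subset\mathcal{R}$ or the identity $V^S=V^{LP}$ from the appendix) can be carried out, but it re-proves machinery the LP framework supplies for free; note that the paper does use the Edgar-type equivalence, though only in the symmetry step identifying $\sup_{\bar\kappa}J^N_n$ with $\sup_{\bar\kappa}J^N_1$. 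One caveat applying to both arguments: since $\bar\kappa$ lives in $\mathcal{K}_N$ and hence may vary with $N$, the convergence $J_1^N(\bar\kappa,\bm{\kappa}^{-1})\to\Gamma[\mu^\star,m^\star](\bar\mu,\bar m)$ must be uniform in $\bar\kappa$; this does hold (the deviator's own contribution to the empirical measures is $O(1/N)$ and $f_t,g_t$ are uniformly continuous on the relevant compact sets), but your phrase ``uniformly over deviations'' deserves this one-line justification, just as the paper's choice of $N_0$ does.
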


\begin{proof}
Let $\varepsilon>0$. By the symmetry of the problem, it suffices to show that for $N$ large enough, 
\begin{equation}\label{conv N player}
\sup_{\bar\kappa \in \mathcal{K}_N} J^N_1(\bar\kappa, \bm{\kappa}^{-1})-\varepsilon\leq J^N_1(\bm{\kappa}).
\end{equation}
In fact, if $n\leq N$, since $(X^n, U)_{n\geq 1}$ is exchangeable, we obtain that $J_n^N(\bm{\kappa})=J_1^N(\bm{\kappa})$. Let us show that
$$\sup_{\bar \kappa\in \mathcal{K}_N}J_n^N(\bar\kappa, \bm{\kappa}^{-n})=\sup_{\bar \kappa\in \mathcal{K}_N}J_1^N(\bar\kappa, \bm{\kappa}^{-1}).$$
We observe that the supremum over randomized stopping times with respect to $\mathbb F^N$ is equal to the supremum over strict stopping times with respect to $\mathbb F^N$ (see Proposition 1.5 in \cite{edgar1982}) and the latter supremum is equal to the supremum over strict stopping times with respect to the complete filtration of $X^1, \ldots, X^N$ and $U$. Now, any strict stopping times with respect to the complete filtration of $X^1, \ldots, X^N$ and $U$ can be represented a.s. by a function of the underlying processes such that under permutation of the $(X^n)_{n\leq N}$ it is also a strict stopping time with respect to the same filtration. Again using that $(X^n, U)_{n\geq 1}$ is exchangeable, we obtain the equality.\vspace{5pt}

\noindent We show now result \eqref{conv N player}. 

\textit{Step 1.} To do so, we first show that
\begin{align}\label{convergence N player 1}
J_1^N(\bm{\kappa})\rightarrow \Gamma[\mu^\star, m^\star](\mu^\star, m^\star).
\end{align}
Let $t\in I\setminus\{T\}$ and $\varphi\in C(E)$. By Theorem \ref{clln},
\begin{align*}
\int_E\varphi(x)m^N_t[\bm{\kappa}](dx) &=\frac{1}{N}\sum_{n=1}^N\varphi(X^n_t)\mathds{1}_{t<\tau_D^{X^n}}\kappa^\star(X^n, U; \{t+1, \ldots, T\})\\
&\underset{N\rightarrow\infty}{\longrightarrow} \mathbb E[\varphi(X^1_t)\mathds{1}_{t<\tau_D^{X^1}}\kappa^\star(X^1, U; \{t+1, \ldots, T\})|U]\\
&=\int_E\varphi(x)m_t^\star(U_t)(dx).
\end{align*}
Taking $\varphi$ from a countable family characterizing the weak convergence (i.e. a sequence $(\varphi_k)_k\subset C(E)$ such that weak convergence of probability measures $\nu^n \rightharpoonup \nu$ is equivalent to $\int \varphi_k d\nu^n \rightarrow \int \varphi_k d\nu$ for all $k$), we get that $m^N_t[\bm{\kappa}]\rightharpoonup m_t^\star(U_t)$ a.s.
The existence of such a countable family characterizing the weak convergence holds by the separability of $C(E)$, but in more general cases one can also use  Theorem 6.6 in Chapter 2 in \cite{parthasarathy2005probability}.
Analogously, $\mu^N_t[\bm{\kappa}]\rightharpoonup \mu_t^\star(U_t)$ a.s. By dominated convergence we get that $\eqref{convergence N player 1}$ holds.

\noindent \textit{Step 2.} Let us show now that 
\begin{equation}\label{convergence N player 2}
\lim_{N\rightarrow\infty}\sup_{\bar\kappa\in \mathcal{K}_N} |J_1^N(\bar\kappa, \bm{\kappa}^{-1}) - \Gamma[\mu^\star, m^\star](\mu^{\bar \kappa}, m^{\bar \kappa})| = 0,
\end{equation}
where $(\mu^{\bar \kappa}, m^{\bar \kappa})$ are the measures associated to $\bar \kappa$:
$$m_t^{\bar \kappa}(u)(B):=\mathbb E[\mathds 1_B(X_t^1)\mathds 1_{t<\tau_D^{X^1}}\bar\kappa (\{t+1, \ldots, T\})|U_t = u],$$
$$\mu_t^{\bar \kappa}(u)(B):=\mathbb E[\mathds 1_B(X_t^1)\mathds 1_{t<\tau_D^{X^1}}\bar\kappa (\{t\})|U_t = u].$$
Note that by Proposition \ref{Dynkin rand}, $(\mu^{\bar \kappa}, m^{\bar \kappa})\in \mathcal{R}$. We will show the convergence only for the ``$m$ part'' since the ``$\mu$ part'' is analogous. \vspace{5pt}

\noindent First we will prove the following result: 
\begin{equation}\label{first result uniform}
\mathbb P\left(\lim_{N\rightarrow\infty}\sup_{\bar \kappa\in \mathcal{K}_N} d_{\text{BL}}(m^N_t[(\bar\kappa, \bm{\kappa}^{-1})], m_t^\star(U_t))=0\right)=1,
\end{equation}
where $d_{\text{BL}}$ is the bounded Lipschitz distance (see \cite{bogachev2007}, volume II, p.192 and Theorem 8.3.2 p.193), which metrizes the topology of weak convergence.

Let $(\varphi_k)_{k\geq 1}\subset C(E)$ be a sequence of functions characterizing weak convergence. By Theorem \ref{clln} we can find $A_k\in \mathcal{F}$ with $\mathbb P(A_k)=1$ and such that on the set $A_k$,
\begin{multline*}
\frac{1}{N}\sum_{n=2}^N\varphi_k(X^n_t)\mathds{1}_{t<\tau_D^{X^n}}\kappa^\star(X^n, U; \{t+1, \ldots, T\}) \underset{N\rightarrow\infty}{\longrightarrow} \\\mathbb E[\varphi_k(X^1_t)\mathds{1}_{t<\tau_D^{X^1}}\kappa^\star(X^1, U; \{t+1, \ldots, T\})|U] =\int_E\varphi_k(x)m_t^\star(U_t)(dx).
\end{multline*}
If we define $A:=\cap_k A_k\in\mathcal{F}$, we have that $\mathbb P(A)=1$ and on the set $A$,
$$\lim_{N\rightarrow \infty}d_{\text{BL}}(\tilde m^N_t[\bm{\kappa}], m_t^\star(U_t)) = 0,$$
where
$$\tilde m^N_t[\bm{\kappa}](B):= \frac{1}{N}\sum_{n=2}^N\delta_{X^n_t}(B)\mathds{1}_{t<\tau_D^{X^n}}\kappa^\star(X^n, U; \{t+1, \ldots, T\}).$$
We deduce that on the set $A$,
$$\sup_{\bar \kappa\in \mathcal{K}_N} d_{\text{BL}}(m^N_t[(\bar\kappa, \bm{\kappa}^{-1})], m_t^\star(U_t))\leq d_{\text{BL}}(\tilde m^N_t[\bm{\kappa}], m_t^\star(U_t)) + \frac{1}{N}\underset{N\rightarrow\infty}{\longrightarrow} 0,$$
which proves \eqref{first result uniform}.\vspace{5pt}

\noindent Since $(t, x, u, m)\mapsto f(t, x, u, m)$ is continuous on a compact set, henceforth uniformly continuous, we can find a bounded modulus of continuity $\omega_f$ associated to $f$ and the bounded Lipschitz distance. In particular,
\begin{align*}
&\sup_{\bar \kappa\in \mathcal{K}_N} \left|\mathbb E\left[\sum_{t=0}^{T\wedge \tau_D^{X^1}-1}f_t(X^1_t, U_t, m^N_t[(\bar\kappa, \bm{\kappa}^{-1})])\bar\kappa(\{t+1, \ldots, T\})\right]\right.\\
&\quad - \left.\mathbb E\left[\sum_{t=0}^{T\wedge \tau_D^{X^1}-1}f_t(X^1_t, U_t, m_t^\star(U_t))\bar\kappa(\{t+1, \ldots, T\})\right]\right|\\
&\leq \sum_{t=0}^{T-1}\sup_{\bar \kappa\in \mathcal{K}_N} \mathbb E[\omega_f(d_{\text{BL}}(m^N_t[(\bar\kappa, \bm{\kappa}^{-1})], m_t^\star(U_t)))]. 
\end{align*}
Now,  by definition of the supremum, there exists a sequence,  depending on $t$, $N$, $\bm{\kappa}^{-1}$, and $m_\cdot^\star(U_\cdot)$, that we denote by $(\bar \kappa_k^{(t, N)})_{k\geq 1}$, such that  $(\bar \kappa_k^{(t, N)})_{k\geq 1}\subset \mathcal{K}_N$ and
$$\sup_{\bar \kappa\in \mathcal{K}_N} \mathbb E[\omega_f(d_{\text{BL}}(m^N_t[(\bar\kappa, \bm{\kappa}^{-1})], m_t^\star(U_t)))] = \sup_{k\geq 1} \mathbb E[\omega_f(d_{\text{BL}}(m^N_t[(\bar \kappa_k^{(t, N)}, \bm{\kappa}^{-1})], m_t^\star(U_t)))].$$
Finally,
\begin{multline*}
\sup_{k\geq 1} \mathbb E[\omega_f(d_{\text{BL}}(m^N_t[(\bar \kappa_k^{(t, N)}, \bm{\kappa}^{-1})], m_t^\star(U_t)))] \leq \\
\mathbb E\left[\omega_f\left(\sup_{k\geq 1} d_{\text{BL}}(m^N_t[(\bar \kappa_k^{(t, N)}, \bm{\kappa}^{-1})], m_t^\star(U_t))\right)\right],
\end{multline*}
where we used that the modulus is nondecreasing. Since the supremum on the right-hand side is measurable and using the convergence result in \eqref{first result uniform}, by dominated convergence, the latter quantity converges to $0$ as $N\rightarrow\infty$. \vspace{5pt}

To conclude, we use \eqref{convergence N player 1} and \eqref{convergence N player 2} to find $N_0$ big enough so that for all $N\geq N_0$ and $\bar\kappa\in \mathcal{K}_N$
$$J_1^N(\bm{\kappa})\geq \Gamma[\mu^\star, m^\star](\mu^\star, m^\star) -\frac{\varepsilon}{2}, \quad \Gamma[\mu^\star, m^\star](\mu^{\bar\kappa}, \bar m^{\bar\kappa})\geq  J_1^N(\bar\kappa, \bm{\kappa}^{-1})-\frac{\varepsilon}{2},$$
leading to
$$J^N_1(\bar\kappa, \bm{\kappa}^{-1})-\varepsilon\leq J^N_1(\bm{\kappa}).$$

\end{proof}

\subsection{Scenario uncertainty}
To fix the ideas, consider a complete probability space $(\Omega, \mathcal{F}, \mathbb P)$ supporting $(X, S, Z)$ satisfying:
\begin{enumerate}[(1)] 
\item The process $X=(X_t)_{t\in I}$ is a Markov chain taking values in $E$ with transition kernels $(\pi_t^X)_{t\in I^*}$ and with initial law $m_0^*$.
\item The random variable $S$ takes values in $\mathcal{S}:=\{1, \ldots, n_S\}$, $n_S\geq 1$.
\item The process $(S, Z_t)_{t\in I}$ is a Markov chain taking values in $\mathcal{S}\times H$, with initial distribution $\Pi_0\times \delta_{z_0}$, where $z_0\in H$ is deterministic, and such that
$$\mathbb P(Z_t = z'|\mathcal{F}_{t-1}^{S, Z})=\pi_t^Z(S, Z_{t-1}; z') \quad a.s.,\quad z'\in H,\; t\in I^*,$$
for some transition kernels $(\pi_t^Z)_{t\in I^*}$ from $\mathcal{S}\times H$ to $H$.
\item The random elements $X$ and $(S, Z)$ are independent.
\end{enumerate}

\begin{remark}
Our probabilistic set-up can include more general cases of scenario uncertainty in which $(S, Z_t)_{t\in I}$ is not Markovian and $X$ can depend on the past of $Z$.
\end{remark}

The random variable $S$ represents the random scenario, which is not observable. However the \textit{prior distribution} $\Pi_0:=\mathcal{L}(S)$ is known. In stochastic control and optimal stopping problems with partial information, we usually want to maximize some expected reward depending on $X$, $Z$ and $S$ in a Markovian way. In the case of partial information, when we do not observe the scenario $S$, the process $Z$ is usually not Markovian by itself. In that case, the problem can be reformulated by replacing $S_t$ with the conditional law of $S_t$ given $\mathcal{F}^{Z}_t$, which is also called the posterior distribution, and obtain a new Markovian setting (e.g. \cite{rieder1975, Pham2005}). However in our framework it will not be necessary to introduce this conditional law process into the state process since we will add the process $U$ presented before, which encompasses it (since it fully characterizes the filtration of $Z$). \vspace{5pt}

\noindent Consider the filtration $\mathbb F:=\mathbb F^{X, U}=\mathbb F^{X, Z}$, then $(X, U)$ is an $\mathbb F$-Markov chain with transition kernels $(\pi_t)_{t\in I^*}$ given by
$$\pi_t((x, u), B\times \{u'\}):=\pi_t^X(x; B)\pi_t^U(u;u'), \quad x\in E, B\in \mathcal{B}(E), u, u'\in W.$$
In particular, if the corresponding assumptions on the initial distribution and the transition kernels are satisfied, we can apply Theorem \ref{existence 1 player}.\vspace{5pt}

Using Lemma \ref{U Markov}, we obtain that $U$ is a Markov chain with transition kernels given for $t\in I$, $u$, $u'\in W$ by
\begin{align*}
\pi^U_t(u; u') :=
\begin{cases}
\sum_{z\in H}\mathds{1}_{u + M[t, z]=u'}\mathbb P(Z_t=z|U_{t-1}=u) \quad &\text{if} \quad  \mathbb P(U_{t-1}=u)>0, \\
\mathds{1}_{u'=u} \quad &\text{if} \quad  \mathbb P(U_{t-1}=u)=0.
\end{cases}
\end{align*}
We observe that $\mathbb P\left(Z_t=z|U_{t-1}=u\right)=\mathbb P\left(Z_t=z |Z_1 = \Psi_1(u), \ldots, Z_{t-1} = \Psi_{t-1}(u)\right)$. Moreover, when $\mathbb P(Z_1=z_1, \ldots, Z_t=z_t)>0$, setting $z_0 = 0$, we have
\begin{align*}
\mathbb P(Z_1=z_1, \ldots, Z_t=z_t) &= \sum_{s\in \mathcal{S}} \mathbb P(Z_0=z_0, \ldots, Z_t=z_t|S = s)\mathbb P(S=s)\\
&= \sum_{s\in \mathcal{S}} \left[\prod_{k=1}^t\mathbb P(Z_k=z_{k}|Z_{k-1}=z_{k-1}, S=s)\right]\mathbb P(S=s)\\
&= \sum_{s\in \mathcal{S}} \left[\prod_{k=1}^t\pi^Z_t(s, z_{k-1};z_k)\right]\Pi_0(s),
\end{align*}
which allows to fully determine $\mathbb P(Z_t=z|U_{t-1}=u)$.

\subsection{Extension to several populations}

We extend the above results to the case of several populations which will be the situation in our application. We assume that there is a total number of $K\in \mathbb N^*$ different populations of agents.\vspace{5pt}


\noindent We are given $K$ compact metric spaces $(E_k, d_k)$, $k\in \{1, \ldots, K\}$, and compact subsets $D_k\subset E_k$ for each $k\in\{1, \ldots, K\}$.\vspace{5pt}

\noindent Let $(\Omega, \mathcal F, \mathbb P)$ be a complete probability space together with a complete filtration $\mathbb F$ and supporting $(\bm{X}, Z)$ where $\bm{X}:=(X^1, \ldots, X^K)$ satisfying:
\begin{enumerate}[(1)]
\item For $k\in\{1, \ldots, K\}$, the state process $X^k=(X_t^k)_{t\in I}$ is an $\mathbb F$-adapted process taking values in $E_k$.
\item The common noise process $Z=(Z_t)_{t\in I}$ is an $\mathbb F$-adapted process taking values in $H$.
\end{enumerate}

\noindent Let $U$, $W$ and $(\Omega_t)_{t\in I}$ be defined as in the previous subsection. We are given the following reward functions for $k\in\{1, \ldots, K\}$ and $t\in I$:
$$f^k_t:D_k\times W\times \prod_{i=1}^K\mathcal{P}^{sub}(D_i)\rightarrow \mathbb R, \quad t\in I\setminus\{T\},\quad \quad g_t^k:E_k\times W\times  \prod_{i=1}^K \mathcal{P}^{sub}(E_i)\rightarrow \mathbb R.$$
In this subsection, we let the following assumptions hold true.

\begin{assumption}\label{assump existence K pop}\leavevmode
\begin{enumerate}[(1)]
\item $X_0^k\in D_k$ a.s. for $k\in\{1, \ldots, K\}$.

\item For each $k\in\{1, \ldots, K\}$, $(X^k, U)$ is an $\mathbb F$-Markov chain with transition kernels $(\pi_t^k)_{t\in I^*}$ such that for all $t\in I^*$, $x, \bar x\in E_k$, $u\in W$ and $u'\in W$, $\pi_t^k(x, u;E_k\times \{u'\})=\pi_t^k(\bar x, u;E_k\times \{u'\})$.

\item For each $t\in I^*$, and $k\in\{1, \ldots, K\}$, $\pi_t^{k}$ is continuous seen as a function from $E_k\times W$ to $\mathcal{P}(E_k\times W)$.

\item For each $t$ and $k\in\{1, \ldots, K\}$, the functions $(x, u, \bm{m})\mapsto f_t^k(x, u, \bm{m})$ and $(x, u, \bm{\mu})\mapsto g_t^k(x, u, \bm{\mu})$ are continuous.
\end{enumerate}
\end{assumption}

\noindent For $k\in\{1, \ldots, K\}$, denote by $\mathcal{L}_k$ the operator
$$\mathcal{L}_k(\varphi)(t, x, u):= \int_{E_k\times W} [\varphi(t+1, x', u')-\varphi(t, x, u)]\pi_{t+1}^k(x, u, dx', du'),$$
where $\varphi\in M_b(I\times E_k\times W)$ and $(t, x, u)\in I\setminus\{T\}\times E_k\times W$. We define also $m_0^{*, k}(u)(B):= \mathbb P(X_0^k\in B|U_0 = u)$. As in the previous section we define the sets $\mathcal{C}_\mu^k := \prod_{t\in I}\mathcal{P}^{sub}(E_k)^{\Omega_t}$ and $\mathcal{C}_m^k := \prod_{t\in I\setminus\{T\}}\mathcal{P}^{sub}(D_k)^{\Omega_t}$, for $k\in \{1, \ldots, K\}$.

\begin{definition}
For $k\in\{1, \ldots, K\}$, let $\mathcal{R}_k$ be the set of pairs $(\mu, m)\in \mathcal{C}_\mu^k\times \mathcal{C}_m^k$ such that for all $\varphi\in C(I\times E\times W)$,
\begin{multline*}
\sum_{t=0}^T\sum_{u\in \Omega_t}p_t(u)\int_{E_k}\varphi(t, x, u) \mu_t(u)(dx) =\sum_{u\in \Omega_0}p_0(u)\int_{D_k} \varphi(0, x, u)m_0^{*, k}(u)(dx)\\
+ \sum_{t=0}^{T-1}\sum_{u\in \Omega_t}p_t(u)\int_{D_k}\mathcal{L}_k(\varphi)(t, x, u) m_t(u)(dx).
\end{multline*}
\end{definition}

\noindent With some abuse of notation we will write $(\bm{\mu}, \bm{m})\in \prod_{k=1}^K\mathcal{R}_k$ instead of $((\mu^1, m^1), \ldots, (\mu^K, m^K))\in \prod_{k=1}^K\mathcal{R}_k$.

\begin{definition}
For $(\bar{\bm{\mu}}, \bar{\bm{m}})\in \prod_{k=1}^K\mathcal{R}_k$ and $k\in\{1, \ldots, K\}$, let $\Gamma_k[\bar{\bm{\mu}}, \bar{\bm{m}}]: \mathcal{R}_k\rightarrow \mathbb R$ be the reward functional of the population $k$ associated to $(\bar{\bm{\mu}}, \bar{\bm{m}})$ and defined by
\begin{multline*}
\Gamma_k[\bar{\bm{\mu}}, \bar{\bm{m}}] (\mu, m)=\sum_{t=0}^{T-1}\sum_{u\in \Omega_t}p_t(u) \int_{D_k} f_t^k\left(x, u,\bar{\bm{m}}_t(u)\right)m_t(u)(dx) \\
+ \sum_{t=0}^T\sum_{u\in \Omega_t}p_t(u)\int_{E_k} g_t^k(x, u, \bar{\bm{\mu}}_t(u))\mu_t(u)(dx).
\end{multline*}
\end{definition}

\begin{definition}
We say that $(\bm{\mu}^\star, \bm{m}^\star)\in \prod_{k=1}^K\mathcal{R}_k$ is an LP MFG Nash equilibrium if for all $k\in\{1, \ldots, K\}$ and all $(\mu, m)\in \mathcal{R}_k$, 
$$\Gamma_k[\bm{\mu}^\star, \bm{m}^\star](\mu, m)\leq \Gamma_k[\bm{\mu}^\star, \bm{m}^\star] (\mu^{k, \star}, m^{k, \star}).$$
\end{definition}

Consider the best response map
$\Theta:\prod_{k=1}^K\mathcal{R}_k \rightarrow 2^{\prod_{k=1}^K\mathcal{R}_k}$ defined by 
$$\Theta(\bar{\bm{\mu}}, \bar{\bm{m}})=\prod_{k=1}^K\underset{(\mu,  m) \in \mathcal{R}_k}{\arg \max } \;\Gamma_k[\bar{\bm{\mu}}, \bar{\bm{m}}] (\mu, m).$$
Using a similar proof to Theorem \ref{existence 1 player}, but using the new best response map $\Theta$ we obtain the following existence result.

\begin{theorem}\label{nash K}
There exists an LP MFG Nash equilibrium.
\end{theorem}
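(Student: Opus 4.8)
The plan is to reduce the multi-population existence result to the single-population case already established in Theorem \ref{existence 1 player}, by verifying that the new best response map $\Theta$ defined on the product set $\prod_{k=1}^K\mathcal{R}_k$ satisfies the hypotheses of the Kakutani-Fan-Glicksberg fixed-point theorem (Corollary 17.55 in \cite{aliprantis2007}). The key observation is that the set of LP MFG Nash equilibria in the multi-population sense coincides exactly with the set of fixed points of this $\Theta$: indeed, $(\bm{\mu}^\star, \bm{m}^\star)$ is a fixed point if and only if for each population $k$, the pair $(\mu^{k,\star}, m^{k,\star})$ maximizes $\Gamma_k[\bm{\mu}^\star, \bm{m}^\star](\cdot, \cdot)$ over $\mathcal{R}_k$, which is precisely the equilibrium condition.

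The key steps are as follows. First, I would establish that each $\mathcal{R}_k$ is nonempty, compact, and convex: compactness follows by applying Theorem \ref{compact discrete} to each population separately (the proof there is entirely population-by-population and carries over verbatim once $E, D, \mathcal{L}, m_0^*$ are replaced by $E_k, D_k, \mathcal{L}_k, m_0^{*,k}$), and convexity is immediate from the linearity of the constraint defining $\mathcal{R}_k$. Consequently the product $\prod_{k=1}^K\mathcal{R}_k$ is nonempty, compact, and convex, and sits inside the locally convex Hausdorff space $\prod_{k=1}^K\left(\prod_{t\in I}\mathcal{M}^s(E_k)^{\Omega_t}\times\prod_{t\in I\setminus\{T\}}\mathcal{M}^s(D_k)^{\Omega_t}\right)$ endowed with the product topology of weak convergence. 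Second, I would verify that for fixed $(\bar{\bm{\mu}}, \bar{\bm{m}})$, each map $(\mu, m)\mapsto \Gamma_k[\bar{\bm{\mu}}, \bar{\bm{m}}](\mu, m)$ is continuous and linear on $\mathcal{R}_k$; this ensures $\Theta$ takes nonempty (by compactness) and convex (by linearity of the objective) values. Third, to obtain the closed graph property of $\Theta$, I would prove the analogue of Lemma \ref{cont reward}: the joint map $((\bar{\bm{\mu}}, \bar{\bm{m}}), (\mu, m))\mapsto \Gamma_k[\bar{\bm{\mu}}, \bar{\bm{m}}](\mu, m)$ is jointly continuous on $\left(\prod_{k}\mathcal{R}_k\right)\times \mathcal{R}_k$. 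This follows from the same argument as Lemma \ref{cont reward}, invoking Lemma F.1 in \cite{dlt2021} to handle the convergence of integrals of the form $\int f_t^k(x, u, \bar{\bm{m}}_t(u))\,m_t(u)(dx)$ in which both the measure and the measure-argument of the integrand vary; the only new ingredient is that $f_t^k$ and $g_t^k$ now depend on the \emph{full vector} $\bar{\bm{m}}_t(u)\in\prod_i\mathcal{P}^{sub}(D_i)$ rather than a single measure, but since all $K$ components converge simultaneously in the product topology and $f_t^k, g_t^k$ are jointly continuous by Assumption \ref{assump existence K pop}(4), the convergence is preserved.

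With joint continuity in hand, the closed-graph property of the product-valued map $\Theta$ follows: since $\Theta(\bar{\bm{\mu}}, \bar{\bm{m}})$ is a Cartesian product of argmax sets, it suffices to check the closed graph for each coordinate argmax, and a standard argument (if $(\bar{\bm{\mu}}^n, \bar{\bm{m}}^n)\to(\bar{\bm{\mu}}, \bar{\bm{m}})$ and $(\mu^n, m^n)\in\arg\max_k$ with $(\mu^n, m^n)\to(\mu, m)$, then passing to the limit in the optimality inequality via joint continuity shows $(\mu, m)$ is optimal for the limit) closes the graph. The Kakutani-Fan-Glicksberg theorem then yields a fixed point, which is the desired equilibrium. \textbf{The main obstacle} I anticipate is the careful verification of the joint continuity lemma in the multi-population setting, specifically ensuring that Lemma F.1 in \cite{dlt2021} applies when the mean-field argument is a vector of measures from distinct populations; however, because the product topology makes simultaneous convergence of all coordinates equivalent to coordinatewise convergence, and the reward functionals decouple into a finite sum over $t$ and $u$ of terms each depending continuously on the relevant measures, this reduces to the single-population case applied coordinatewise and presents no genuine new difficulty — the multi-population result is essentially a notational extension of the single-population argument.
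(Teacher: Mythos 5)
Your proposal is correct and takes essentially the same route as the paper: the paper's proof of Theorem \ref{nash K} is a one-line remark that the argument of Theorem \ref{existence 1 player} applies verbatim to the product best-response map $\Theta$ on $\prod_{k=1}^K\mathcal{R}_k$, i.e., componentwise compactness and convexity of each $\mathcal{R}_k$, joint continuity of each $\Gamma_k$ (via the analogue of Lemma \ref{cont reward} built on Lemma F.1 of \cite{dlt2021}), and Kakutani--Fan--Glicksberg. Your explicit verification of the multi-population joint-continuity step and of the coordinatewise closed-graph property simply spells out what the paper leaves implicit, with no substantive deviation.
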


\section{Application to energy transition under scenario uncertainty}
In this section we extend the model of \cite{adt2021} to account for transition scenario uncertainty. The aim of this stylized model is to simulate the electricity market dynamics (arrivals and departures of producers) over a long time period (e.g., 20 years), accounting for individual optimization objectives of the producers, in the presence of exogenous electricity demand. In the model there are three types of producers: the conventional producers (e.g., gas-fired power plants), who aim to exit the market at the optimal time, the potential renewable projects (e.g., wind power plants) who aim to enter the market at the optimal time, and the baseline producers (e.g., nuclear), who are present in the market during the entire simulation. The conventional producers determine the amount of electricity they produce but have random production costs and the renewable producers have zero production cost but produce a random amount of electricity per unit time. The demand is deterministic, and it is assumed that there is a large number of producers of both types, and that their production costs of conventional plants and the capacity factors of renewable plants are all independent. As a result, in the MFG limit, the randomness disappears and the electricity price becomes deterministic. 

The assumption of deterministic electricity demand and independent cost and capacity factor processes is not very realistic: in practice, electricity demand is affected by economic situation, production costs depend on fuel prices and capacity factors are determined by weather patterns, which may affect different producers in a similar way. All this calls for the inclusion of common noise in the model, which would not disappear in the limit and also affect the electricity prices. Indeed, although the model does not focus on short-term electricity prices but rather on longer-term averages, these are still random and depend on all the above factors. 

In this section we therefore extend this model based on the theory of optimal stopping games with common noise, developed in Section \ref{abstract}. Since we are primarily interested in the impact of transition scenario uncertainty on the electricity market dynamics, and the price of carbon is the key variable distinguishing transition scenarios, we assume in our extension that the carbon price is random and impacts the production costs of conventional power plants as well as the electricity demand. As is usually the case in IAM scenarios, the carbon price should be interpreted as a proxy for all transition costs rather than a specific emission pricing mechanism. To keep the model relatively simple, we still assume that the capacity factors of renewable power plants are independent, but it would be easy to introduce dependence through an additional random factor.  

\begin{figure}
\centerline{\includegraphics[width=\textwidth]{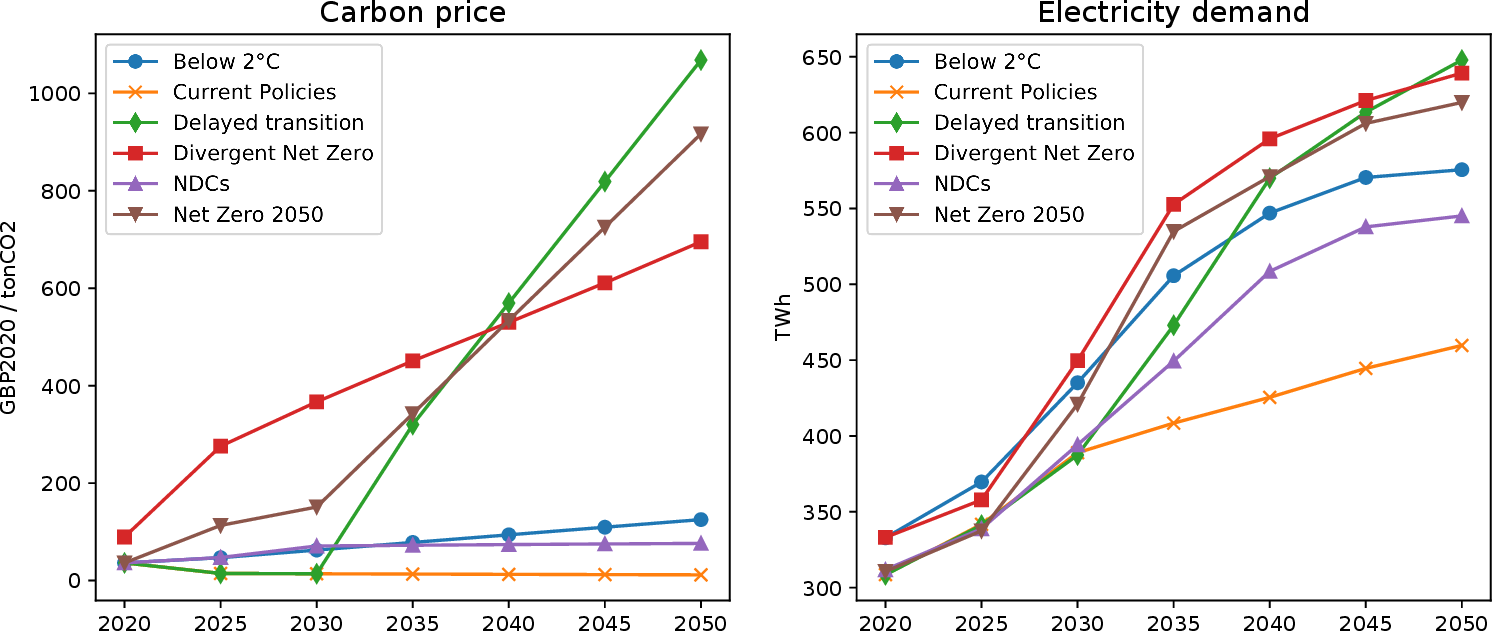}}
\caption{Evolution of carbon price (left) and annual electricity consumption in the 6 scenarios from the NGFS scenario database (REMIND-MAgPIE 3.0-4.4 model downscaled to UK, release 3, available from \url{https://data.ene.iiasa.ac.at/ngfs/}).}
\label{pricedemand}
\end{figure}


\subsection{Model description}\label{sec n player}

We consider a terminal horizon $T_0>0$ (measured in years) and discretize the interval $[0, T_0]$ into $T$ intervals of equal size  $\Delta_t := T_0/T$. 
In the numerical illustrations $\Delta_t$ will be equal to 3 months, allowing to account for annual seasonality. All quantities are understood as averages over period of length $\Delta_t$.  As before, we will denote by $I:=\{0,1, \ldots, T\}$ and $I^*:=\{1, \ldots, T\}$ the sets of time indices. We assume that all random variables considered in this section are defined on the same complete probability space $(\Omega, \mathcal{F}, \mathbb P)$.

\paragraph{Common noise and demand.} The economic agents in our model are subject to common random shocks in the carbon price. We denote by $Z=(Z_t)_{t\in I}$ the stochastic process modeling the carbon price (measured in GBP/tonCO2 in the examples) and suppose that $Z$ takes values in a finite subset $H\subset \mathbb R^+$ and has a deterministic initial value $z_0\in H$.
Moreover we assume that these common shocks depend on the economic scenario, which is not observed by the agents. We consider a finite number of possible scenarios or regimes and we denote by $\mathcal{S}$ the set of all scenarios. Let $S$ be a random variable taking values in $\mathcal{S}$ representing the economic scenario. The law $\Pi_0:=\mathcal{L}(S)\in \mathcal{P}(\mathcal{S})$ is called the prior scenario distribution. Conditional on each scenario, the agents know the distribution of the common random shocks. More precisely, we assume that $(S, Z_t)_{t\in I}$ is a Markov chain with initial condition $\Pi_0 \times \delta_{z_0}$ characterized by the transition kernels $(\pi^Z_t)_{t\in I^*}$ from $\mathcal{S}\times H$ to $H$, i.e.
$\mathbb P(Z_{t}=z'|\mathcal{F}^{S, Z}_{t-1})=\pi_t^Z(S, Z_{t-1}; z')$ a.s.
Finally we assume that the deseasonalized demand process $D$ (averaged over periods of length $\Delta_t$ and measured in GW) is of the form
$$D_t=d(t)+\beta (Z_t-z_0), \quad t\in I,$$
for some deterministic function $d:I\rightarrow \mathbb R^+$ and some parameter $\beta\geq 0$ (in GW$\times$tonCO2/GBP). This means that the demand is positively correlated with the carbon price, which is justified by the fact that as the carbon price increases, carbon-intensive sectors of the industry are forced to electrify and contribute to electricity demand (see Figure \ref{pricedemand}). 

To make our model more realistic, we assume that the demand decomposes as average peak demand $D^p$ (Mon-Fri, 7AM-8PM) and average off-peak demand $D^o$ and we multiply the deseasonalized demand by a deterministic function $(\lambda_t)_{t\in I}\subset\mathbb R^+$, which represents the seasonal cycle correction. More precisely, for all $t\in I$, we have
$$\lambda_t D_t=c_pD_t^p+c_oD_t^o,$$
where $c_p=65/168$ is the proportion of peak hours per week and $c_o=1-c_p$ is the proportion of off-peak hours per week. Moreover we assume that the ratio peak demand/off-peak demand $(D_t^p/D_t^o)_{t\in I}$ is known and constant in time. Denoting by $r_d\geq 1$ this ratio, we can deduce the expressions $D_t^p=\bar c_p \lambda_t D_t$ and $D_t^o=\bar c_o \lambda_t D_t$, where
$\bar c_p:= \frac{r_d}{r_dc_p + c_o}$ and $\bar c_o:= \frac{1}{r_dc_p + c_o}$. $D^p_t$ thus denotes the average demand during peak periods of the time interval $[t\Delta_t,(t+1)\Delta_t]$, and $D^{o}_t$ thus denotes the average demand during off-peak periods of this time interval. 

\paragraph{Conventional producers.} The state of each conventional producer will be described by its baseline marginal cost per MWh produced (in GBP/MWh). In the model of \cite{adt2021}, the baseline marginal cost process follows the CIR dynamics:
\begin{align}
dB^i_t = k(\theta - B^i_t) dt + \delta \sqrt{B^i_t} dW^i_t.\label{adtbaseline}
\end{align}
In the present setting, we approximate the dynamics \eqref{adtbaseline} with a discrete-time Markov chain. Namely, for $\Delta x>0$ and $C_{\text{max}}>C_{\text{min}}\geq 0$, we define the baseline marginal cost processes of the conventional producers $(B^i)_{1\leq i\leq N_C}$ to be independent homogeneous Markov chains on the space $E_1 = \{x_k:=C_{\text{min}} + k \Delta x: k=0, \ldots, n_1\}$ (with $n_1$ such that $C_{\text{min}} + n_1 \Delta x=C_{\text{max}}$) with i.i.d. initial conditions $(\xi^i)_{1\leq i\leq N_C}$ of law $m_0^*$ and with the same transition kernel. The transition probabilities are defined as follows: if $x\in \{x_1, \ldots, x_{n_1-1}\}$, we set
\begin{align*}\pi^B(x, x+\Delta x)&:= \frac{\sigma^2(x)/2 + \Delta x \max (b(x), 0)}{\sigma^2(x)+\Delta x|b(x)|},\\ \pi^B(x, x-\Delta x)&:= \frac{\sigma^2(x)/2 - \Delta x \min (b(x), 0)}{\sigma^2(x)+\Delta x|b(x)|},\end{align*}
otherwise we set $\pi^B(x_0, x_1):=1$ and $\pi^B(x_{n_1}, x_{n_1-1}):=1$ (i.e. the Markov chain reflects on the boundary). Here we take $b(x)=k(\theta - x)$ and $\sigma(x)=\delta \sqrt{x}$ to approximate the continuous-time dynamics \eqref{adtbaseline} (see p. 96 in \cite{kushner2001}).


\vspace{5pt}
When operating at fraction $\alpha$ of its total capacity\footnote{The installed productive capacity is assumed to be the same for all the conventional producers and fixed over time.}, the conventional producer $i\in \{1, \ldots, N_C\}$  has a marginal unit cost (in GBP/MWh) of
$$C_t^i(\alpha):=B^i_t + \tilde\beta Z_t+c(\alpha),\quad t\in I,$$
where $\tilde \beta \geq 0$ (in tonCO2/MWh) is the emission intensity and $c:[0, 1]\rightarrow\mathbb R_+$ is a deterministic $C^1$ function with $c(0) = 0$ and positive derivative on $]0, 1[$. For each step $t\in I$ the conventional producer $i$ observes peak and off-peak electricity price per unit\footnote{This price refers to peak/off-peak electricity futures price with delivery period $[t,t+\Delta_t]$, observed at $t$.}, measured in GBP/MWh, and denoted by $P^p_t$ and $P^{o}_t$, which applies to the entire interval $[t\Delta_t,(t+1)\Delta_t]$. The producer then chooses peak and off-peak capacity utilization rates $\alpha^p_t\in [0,1]$ and $\alpha^{o}_t\in [0,1]$, for the entire period, in order to maximize its profit per unit of capacity (in GBP/MWh): the average profit for peak hours is
\begin{equation}\label{opti capa}
P^p_t\alpha^p_t - \int_0^{\alpha^p_t} C_t^i(y)dy = (P^p_t-B_t^i-\tilde \beta Z_t)\alpha^p_t - \int_0^{\alpha^p_t} c(y)dy,  
\end{equation}
and similarly for off-peak hours.

Introduce the function $\bar \alpha:\mathbb R\rightarrow\mathbb [0, 1]$ defined by
$$
\bar \alpha(y):=
\begin{cases}
0 &\text{ if } y\leq 0,\\
c^{-1}(y) &\text{ if } y\in ]0, c(1)],\\
1 &\text{ if } y> c(1).
\end{cases}
$$
Then $\bar \alpha$ is continuous and it is $C^1$ and increasing on $]0, c(1)[$. Moreover the unique solution of the maximization problem \eqref{opti capa} is given by $\alpha^\star = \bar \alpha(P^p_t-B_t^i-\tilde \beta Z_t)$. The conventional producer $i$ will therefore use the fraction $\bar \alpha(P^p_t-B_t^i-\tilde \beta Z_t)$ of its capacity, making a gain of $G(P^p_t-B_t^i-\tilde \beta Z_t)$ per unit time in peak hours, where
$$G(x):=x\bar\alpha(x)-\int_0^{\bar \alpha (x)}c(y)dy= \int_0^{\bar \alpha (x)}(x-c(y))dy = \int_0^x \bar\alpha(z)dz,$$
the third equality coming from the change of variables $z=c(y)$ and integration by parts.
For the entire period, the average gain per unit time is therefore equal to
$$
c_p G(P^p_t-B_t^i-\tilde \beta Z_t)+c_{o} G(P^{o}_t-B_t^i-\tilde \beta Z_t).
$$

\paragraph{Renewable producers.} We consider a population of renewable producers who generate electricity with zero cost but with a stochastic capacity factor\footnote{As for the conventional producers, the installed productive capacity of the renewable producers is assumed to be the same and fixed over time.}. In the model of \cite{adt2021}, the stochastic capacity factor of renewable producers follows the Jacobi dynamics
\begin{align}
dR^i_t = \bar k(\bar \theta - R^i_t) dt + \bar\delta \sqrt{R^i_t(1-R^i_t)} d\overline W^i_t\label{adtrenewable}
\end{align}
In the present setting, we once again approximate the dynamics \eqref{adtrenewable} with a discrete-time Markov chain. For  $\Delta \bar x>0$ and $1\geq R_{\text{max}}>R_{\text{min}}\geq 0$, we define the stochastic capacity factors of the renewable producers $(R^i)_{1\leq i\leq N_R}$ to be independent homogeneous Markov chains on the space $E_2 = \{\bar x_k:=R_{\text{min}} + k \Delta \bar x: k=0, \ldots, n_2\}$ (with $n_2$ such that $R_{\text{min}} + n_2 \Delta \bar x=R_{\text{max}}$) with i.i.d. initial conditions $(\bar\xi^i)_{1\leq i\leq N_R}$ of law $\bar m_0^*$ and with the same transition kernel. The transition probabilities are defined as follows: if $\bar x\in \{\bar x_1, \ldots, \bar x_{n_2-1}\}$, we set
\begin{align*}\pi^R(\bar x, \bar x+\Delta \bar x)&:= \frac{\bar \sigma^2(\bar x)/2 + \Delta \bar x \max (\bar b(\bar x), 0)}{\bar \sigma^2(\bar x)+\Delta \bar x|\bar b(\bar x)|},\\\quad \pi^R(\bar x, \bar x-\Delta \bar x)&:= \frac{\bar \sigma^2(\bar x)/2 - \Delta \bar x \min (\bar b(\bar x), 0)}{\bar \sigma^2(\bar x)+\Delta \bar x|\bar b(\bar x)|},\end{align*}
otherwise we set $\pi^R(\bar x_0, \bar x_1):=1$ and $\pi^R(\bar x_{n_2}, \bar x_{n_2-1}):=1$. Here we consider $\bar b(x)=\bar k(\bar \theta - x)$ and $\bar \sigma(x)=\bar \delta \sqrt{x(1-x)}$ to approximate the continuous-time dynamics \eqref{adtrenewable}. Recall that the process $R^i_t$ models the capacity factor averaged over periods of length $\Delta_t$; the randomness thus accounts for longer-term climate variability rather than high-frequency wind speed fluctuations. 
For renewable production, we do not make a distinction between peak and off-peak periods: we simply assume that the total output of $i$-th renewable producer per unit of installed capacity during the peak hours of the period $[t\Delta_t, (1+t)\Delta_t]$ is equal to $\Delta_t c_p R^i_t$, and the total output during off-peak hours is given by $\Delta_t c_o R^i_t$. Admittedly, this is not very realistic, as, for example, wind is stronger during off-peak hours. This could be addressed by including correction factors, but, to keep our stylized model tractable, we leave this for further research. 

\paragraph{Information and decision variables.} We assume that the information available to all economic agents is the same and is represented through the complete natural filtration of $(B^i)_{1\leq i\leq N_C}$, $(R^i)_{1\leq i \leq N_R}$ and $Z$, which is denoted by $\mathbb F=(\mathcal{F}_t)_{t\in I}$. We denote by $\mathcal{T}$ the set of $\mathbb F$-stopping times valued in $I$. The conventional producer $i\in \{1, \ldots, N_C\}$ aims to choose a stopping time denoted by $\tau_i\in \mathcal{T}$ to \textit{exit} the market in order to maximize a gain functional to be defined later. In contrast, the (potential) renewable producer $i\in \{1, \ldots, N_R\}$ aims to choose a stopping time denoted by $\bar \tau_i\in \mathcal{T}$ to build the renewable power plant and \textit{enter} the market.

\paragraph{Electricity supply and price formation.} We assume that the installed productive capacity of each conventional producer is $I_C/N_C$, where $I_C>0$ (in GW) is the total installed productive capacity of all conventional producers (who may leave the market) at time $0$. We consider a third class of economic agents who are conventional producers that do not leave the market at any time and produce a baseline electricity supply. These are for example nuclear power plants and more generally any power plants that are unlikely to leave the market during the simulation period. For a given price $p\geq 0$, the baseline supply of the conventional producers is $S^b(p)$ (in GW), where $S^b:\mathbb R_+\rightarrow\mathbb R_+$ is an increasing continuous function with $S^b(0)=0$ and such that there exists some $\tilde p\geq 0$ verifying 
$$S^b(\tilde p)\geq \bar c_p \lambda_{max}(d_{max} + \beta (z_{max}-z_0)),$$
with $d_{max}:=\max_{t\in I}d(t)$, $\lambda_{max}:=\max_{t\in I}\lambda_t$ and $z_{max}:=\max_{z\in H}z$. This is a technical assumption which guarantees that for a sufficiently high price, there is enough baseline supply to satisfy the maximum possible demand level. In our price formation mechanism we will however impose a price cap to be able to model electricity shortages.
At a given price level $p\geq 0$ at step $t\in I$, the total supply of conventional producers per unit time is given by
\begin{align}
S^c_t(p)=\sum_{i=1}^{N_C}\frac{I_C}{N_C}\bar \alpha(p-B^i_t-\tilde \beta Z_t)\mathds{1}_{t<\tau_i} + S^b(p).\label{capconv.eq}
\end{align}

We assume also that there is another class of renewable producers who are already in the market at time $0$. These are simply the renewable plants which have already been constructed at the start of simulation. We suppose that there is a total number of $\tilde N_R$ of these renewable producers and each one has an installed productive capacity of $I^b_R/\tilde N_R$, where $I^b_R>0$ (in GW) is the total installed productive capacity of these renewable producers. Furthermore we assume that their stochastic capacity factors are given by i.i.d. processes $(\tilde R^i)_{1\leq i\leq \tilde N_R}$ with the same distribution as $R^1$ and which are independent from the other processes. Finally, the installed productive capacity of each potential renewable producer (i.e. not yet in the market) aiming to enter the market will be $I_R/N_R$, where $I_R>0$ (in GW) is the maximum renewable capacity which can potentially be added until time $T$. The total supply of renewable producers per unit time at step $t\in I$ is
\begin{align}
S^r_t=\sum_{i=1}^{\tilde N_R}\frac{I_R^b}{\tilde N_R}\tilde R_t^i + \sum_{i=1}^{N_R}\frac{I_R}{N_R}R^i_t\mathds{1}_{t\geq \bar \tau_i}.\label{capren.eq} 
\end{align}

The market price (in GBP/MWh) is determined by the merit order mechanism, applied separately to peak and off-peak periods.\footnote{Recall that we are not dealing with day-ahead prices but with futures prices with delivery period $\Delta_t$. Our merit order mechanism consists simply in matching of demand and supply in the futures market. Electricity futures contracts are traded separately for peak and off-peak periods.} The renewable producers bid their entire (stochastic) capacity at zero price. If the renewable supply is not sufficient to match the demand, the conventional suppliers fill the residual demand. We assume that the price is capped to some value $p_{max}>0$. Consider first the peak hours of the period $[t\Delta_t, (1+t)\Delta_t]$. The total demand for these hours is given by $\Delta_t c_p D^p_t$, the renewable supply is $\Delta_t c_p S^r_t$, and the conventional supply at price level $p$ is $\Delta_t c_pS^c_t(p)$.

The peak price for this interval is thus given by the random variable
$$P_t^p:=\inf\{p\geq 0: (D_t^p-S^r_t)^+\leq S_t^c(p)\}\wedge p_{max}.$$
Observe that the set $\{p\geq 0: (D_t^p-S^r_t)^+\leq S_t^c(p)\}$ is nonempty since by assumption $\tilde p$ belongs to it. The off-peak price $P^o$ is defined in the same way, but replacing the peak demand $D^p$ by the off-peak demand $D^o$. \vspace{5pt}

We remark that the producers impact the price  through the empirical measures associated to the state processes and stopping times of the two populations of agents:
$$\frac{1}{N_C}\sum_{i=1}^{N_C}\delta_{B^i_t}(dx)\mathds{1}_{t<\tau_i}, \quad \frac{1}{N_R}\sum_{i=1}^{N_R}\delta_{R^i_t}(dx)\mathds{1}_{t\geq \bar \tau_i}.$$
This structure will allow us to simplify the expression of the price in the mean-field limit (i.e. when the number of agents in each population goes to infinity). Because of the common random shocks that do not average out in the limit, the price will be an adapted functional of the stochastic carbon price $Z$.

\paragraph{Expected gain of conventional and renewable producers.} The conventional producer $i\in \{1, \ldots, N_C\}$ aims to maximize its expected discounted profit (in GBP per MW of installed capacity) over the time period $[0, T_0]$ by choosing when to exit the market, i.e., solves the following problem:
\begin{multline*}
\sup_{\tau_i\in \mathcal{T}}\mathbb E\Big[  \sum_{t=0}^{\tau_i-1}e^{-\rho t\Delta_t} \left(c_pG(P_t^p-B_t^i-\tilde \beta Z_t) + c_oG(P_t^o-B_t^i-\tilde \beta Z_t)- \kappa_C\right)\Delta_t\\+K_Ce^{-(\gamma_C+\rho)\tau_i\Delta_t}\Big], 
\end{multline*}
where $\kappa_C\geq 0$ is the fixed operating cost (in GBP/MWyr), $e^{-\gamma_C \tau_i\Delta_t}K_C$ is the value recovered per MW of installed capacity (in GBP/MW) if the plant is sold at the year $\tau_i\Delta_t$ with depreciation rate $\gamma_C$ and $\rho$ is the discount rate.

On the other hand, the renewable producer $i\in \{1, \ldots, N_R\}$ bids its full (intermittent) capacity after entering the market, and wants to maximize its expected profit (in GBP per MW of installed capacity) over the time period $[0, T_0]$ by choosing when to enter the market, i.e.
$$\sup_{\bar\tau_i\in \mathcal{T}}\mathbb E\left[\sum_{t = \bar\tau_i}^{T-1} e^{-\rho t\Delta_t}([c_pP_t^p+c_oP_t^o] R^i_t-\kappa_R)\Delta_t + K_Re^{-\rho T_0-\gamma_R(T_0-\bar \tau_i\Delta_t)} - K_R e^{-\rho \bar \tau_i\Delta_t}\right],$$
where $\kappa_R\geq 0$ is the fixed operating cost (in GBP/MWyr), $K_R$ is the fixed cost per kW of building the plant (in GBP/MW), 
$e^{-\gamma_R(T_0-\bar \tau_i\Delta_t)}K_R$ is the value per kW of the plant at time $T_0$ (in GBP/MW) with $\gamma_R$ being the depreciation rate. 

\paragraph{Nash equilibria.} Since the expected gain functionals depend on the electricity price, which is determined by the decisions of both classes of agents, all optimization problems are coupled and it is natural to look for Nash equilibrium strategies. If the conventional producers choose the stopping times $\bm{\tau}=(\tau_1, \ldots, \tau_{N_C})\in \mathcal{T}^{N_C}$ and the renewable producers choose the stopping times $\bar{\bm{\tau}}=(\bar \tau_1, \ldots, \bar \tau_{N_R})\in \mathcal{T}^{N_R}$, then we denote by $V_i(\bm{\tau}, \bar{\bm{\tau}})$ the corresponding expected profit per unit of capacity of the conventional producer $i\in \{1, \ldots, N_C\}$ and by $\bar V_j(\bm{\tau}, \bar{\bm{\tau}})$ the corresponding expected profit per unit of capacity of the renewable producer $j\in \{1, \ldots, N_R\}$. We will say that $(\bm{\tau}^\star, \bar{\bm{\tau}}^\star)$ is a Nash equilibrium if for all $i\in \{1, \ldots, N_C\}$ and $\tau_i\in \mathcal{T}$,
$$V_i((\tau_i, \bm{\tau}^{\star,-i}), \bar{\bm{\tau}}^\star)\leq V_i(\bm{\tau}^\star, \bar{\bm{\tau}}^\star),$$
and for all $j\in \{1, \ldots, N_R\}$ and $\bar \tau_j\in \mathcal{T}$,
$$\bar V_j(\bm{\tau}^\star, (\bar \tau_j, \bar{\bm{\tau}}^{\star, -j}))\leq \bar V_j(\bm{\tau}^\star, \bar{\bm{\tau}}^\star).$$
Here we used the notation $(y, x^{-i}):=(x_1, \ldots, x_{i-1}, y, x_{i+1}, \ldots, x_d)$ for some $y\in \mathbb R$ and $x\in \mathbb R^d$.
In other words, a Nash equilibrium is a tuple of strategies (stopping times in our case) where no agent has an incentive to deviate from its individual strategy. 


Our main goal is to understand the evolution of the market in equilibrium (i.e. when players choose the Nash equilibrium strategies). In particular, we want to understand the renewable penetration under common random shocks on the carbon price and under scenario uncertainty. To study our model we make use of MFGs with common noise and partial information, which are introduced in the next section.

\subsection{MFGs formulation}
\label{model_mfg}
The game theoretic framework for day-ahead electricity markets described in Section \ref{sec n player} is not tractable for a large number of conventional and renewable producers. However, since the interactions of the agents via the price are of the mean-field type and there is symmetry of the agents dynamics inside each class, we can expect that the problem is well approximated by an MFG with common noise and two populations. Here the common noise is the process $Z$ interpreted as  the carbon price. Note that $Z$ is not a Markov process in general due to its dependence on the economic scenario, which, to the best of our knowledge, contrasts with the literature on MFGs with common noise. From now on we consider the asymptotic formulation. We place ourselves in a complete probability space $(\Omega, \mathcal{F}, \mathbb P)$ big enough to carry all the random variables described below.

\paragraph{Probabilistic set-up.} In the MFG setting we consider a representative agent for each class. The representative conventional producer is characterized by its baseline marginal cost per MWh $B$ which is a homogeneous Markov chain on $E_1$ with initial distribution $m_0^*$ and transition kernel $\pi^B$. On the other hand, the stochastic capacity factor of the representative renewable producer is given by a homogeneous Markov chain $R$ on $E_2$ with initial distribution $\bar m_0^*$ and transition kernel $\pi^R$. We consider the common noise process $Z$, the economic scenario $S$ and the normalized demand process $D$ as described in Section \ref{sec n player}. We assume that $B$, $R$ and $(S, Z)$ are independent. We endow the probability space with the completed natural filtration of $B$, $R$ and $Z$, denoted by $\mathbb F=(\mathcal{F}_t)_{t\in I}$. We denote by $\mathcal{T}$ the set of $\mathbb F$-stopping times valued in $I$. Construct $U$, $W$, $(\Omega_t)_{t\in I}$ and $(\Psi_t)_{t\in I}$ as in Section \ref{abstract}.

\paragraph{Electricity supply and price formation.} Define the supply mappings $S^c:H\times\mathcal{P}^{sub}(E_1)\times \mathbb R_+\rightarrow\mathbb R$ and $S^r: I\times \mathcal{P}^{sub}(E_2)\rightarrow\mathbb R$ by
$$S^c(z, m, p):=I_C \int_{E_1}\bar\alpha(p - x-\tilde \beta z)m(dx) + S^b(p),$$
$$S^r(t, \bar m):= (I_R^b + I_R)\int_{E_2}x\;\eta^R_t(dx)- I_R\int_{E_2}x\;\bar m(dx),$$
where $(t, z, m, \bar m, p)\in I\times H\times  \mathcal{P}^{sub}(E_1)\times \mathcal{P}^{sub}(E_2)\times \mathbb R_+$ and $\eta^R_t=\mathcal{L}(R_t)$. Here $m$ corresponds to the distribution of the baseline cost of the representative conventional producer who is still in the game and $\bar m$ corresponds to the distribution of the stochastic capacity factor of the representative potential renewable project who has not yet made the decision to build the plant. Recall that $I_C$ is the total initial capacity of conventional plants which may leave the market, $I_R$ is the total potential capacity of renewable plants which may enter the market but are not yet constructed at time $t=0$, and $I^b_R$ is the total capacity of renewable plants which are already in the market at time $t=0$. These mappings should be compared with equations \eqref{capconv.eq} and \eqref{capren.eq} in the setting of finite number of agents.

Define the price function $P:I\times H\times \mathbb R_+^2\times  \mathcal{P}^{sub}(E_1)\times \mathcal{P}^{sub}(E_2)\rightarrow [0, p_{max}]^2$ by 
$$P(t, z, d, m, \bar m)=(\bar P(t, z, d_1, m, \bar m), \bar P(t, z, d_2, m, \bar m)),$$ 
where $\bar P:I\times H\times \mathbb R_+\times  \mathcal{P}^{sub}(E_1)\times \mathcal{P}^{sub}(E_2)\rightarrow [0, p_{max}]$ is defined by
$$\bar P(t, z, d, m, \bar m)=\inf\{p\geq 0: (d-S^r(t, \bar m))^+\leq S^c(z, m, p)\}\wedge p_{max}.$$
The first component of $P$ will correspond to the peak price and the second component to the off-peak price.

\paragraph{Reward mappings.} Define the reward functions for the conventional producer $f:I\setminus\{T\}\times E_1\times H\times \mathbb R_+^2\rightarrow \mathbb R$ and $g:I\rightarrow \mathbb R$ by 
\begin{align*}
f(t, x, z, p)&:=e^{-\rho t\Delta_t}(c_p G(p_1-x-\tilde \beta z) + c_o G(p_2-x-\tilde \beta z) -\kappa_C)\Delta_t,\\
g(t)&:=K_C e^{-(\gamma_C+\rho) t \Delta_t}.
\end{align*}
Define also the reward functions for the renewable producer $\bar f:I\setminus\{T\}\times E_2\times \mathbb R_+^2\rightarrow \mathbb R$ and $\bar g:I\rightarrow \mathbb R$ by 
\begin{align*}
\bar f(t, x, p)&:= - e^{-\rho t\Delta_t}([c_p p_1 + c_o p_2] x- \kappa_R)\Delta_t, \\
\bar g(t)&:=K_Re^{-\rho T_0-\gamma_R(T_0- t\Delta_t)} - K_R e^{-\rho t\Delta_t}.
\end{align*}
Observe that we defined $\bar f$ with a minus sign. This will be justified in the definition of the strong formulation for the MFG problem.

\paragraph{Strong MFG equilibrium with strict stopping time.} We give here the definition of a strong MFG equilibrium with strict stopping time for this particular problem with respect to the initial probability space. 

\begin{definition}[\textit{Strong MFG equilibrium with strict stopping time}]\leavevmode
\begin{enumerate}[(1)]
\item Fix for each $t\in I\setminus\{T\}$,  $m_t:W\rightarrow \mathcal{P}^{sub}(E_1)$ and $\bar m_t:W\rightarrow \mathcal{P}^{sub}(E_2)$, and find the solution to the optimal stopping problems
\begin{equation}\label{opti strong 1}
\sup_{\tau \in \mathcal{T}}\; \mathbb{E}\left[\sum_{t=0}^{\tau-1} f\left(t, B_t, Z_t, P(t, Z_t, D_t^p, D_t^o, m_t(U_t), \bar m_t(U_t))\right) + g\left(\tau\right)\right],
\end{equation}
\begin{equation}\label{opti strong 2}
\sup_{\bar\tau \in \mathcal{T}}\; \mathbb{E}\left[\sum_{t=\bar\tau}^{T-1} (-\bar f)\left(t, R_t, P(t, Z_t, D_t^p, D_t^o, m_t(U_t), \bar m_t(U_t))\right) + \bar g\left(\bar \tau\right)\right].
\end{equation}
Remark that problem \eqref{opti strong 2} is equivalent to the following problem
\begin{equation}\label{opti strong 2 bis}
\sup_{\bar\tau \in \mathcal{T}}\; \mathbb{E}\left[\sum_{t=0}^{\bar\tau-1} \bar f\left(t, R_t, P(t, Z_t, D_t^p, D_t^o, m_t(U_t), \bar m_t(U_t))\right) + \bar g\left(\bar \tau\right)\right],
\end{equation}
which is the one that we will use later.
\item Denoting by $\tau^{m, \bar m}$ and $\bar \tau^{m, \bar m}$ some optimal stopping times associated to $(m, \bar m)$ (solutions of the problems \eqref{opti strong 1} and \eqref{opti strong 2 bis} respectively), find $(m, \bar m)$ such that
$$m_t(B) = \mathbb P[B_t\in B, t<\tau^{m, \bar m}|U_t = u], \quad B\in \mathcal{B}(E_1),\quad t\in I\setminus\{T\},\quad u\in \Omega_t,$$
$$\bar m_t(B) = \mathbb P[R_t\in B, t<\bar \tau^{m, \bar m}|U_t = u], \quad B\in \mathcal{B}(E_2),\quad t\in I\setminus\{T\},\quad u\in \Omega_t.$$
\end{enumerate}
\end{definition}

\noindent This problem is difficult to solve for general coefficients, so we will work with the linear programming formulation presented in the previous section. This formulation allows to prove existence of equilibria under general assumptions and facilitates their numerical computation.

\paragraph{Linear programming formulation.} 

As explained in the previous section, the linear programming formulation consists in replacing the expectations concerning the processes and stopping times by integrals of occupation measures. Using a similar definition to Definition \ref{Set of constraints}, we denote by $\mathcal{R}$ (resp. $\bar{\mathcal{R}}$) the set of pairs $(\mu, m)$ (resp. $(\bar\mu, \bar m)$) satisfying the LP constraint associated to $(B, U)$ (resp. $(R, U)$).

\noindent Now define the peak and off-peak demand functions: for $t\in I$ and $u\in W$,
$$d_t^p(u):=\bar c_p\lambda_t(d(t) + \beta (\Psi_t(u)-z_0)), \quad d_t^o(u):=\bar c_o\lambda_t(d(t) + \beta (\Psi_t(u)-z_0)),$$
where $\Psi_t$ is defined by \eqref{def psi}, and maps $U_t$ to $Z_t$.

\begin{definition}
For $(\mu', m',\bar \mu', \bar m')\in \mathcal{R}\times \bar{\mathcal{R}}$, let $\Gamma[\mu', m',\bar \mu', \bar m']: \mathcal{R}\rightarrow \mathbb R$ be the reward functional of the conventional producer associated to $(\mu', m',\bar \mu', \bar m')$ and defined by
\begin{multline*}
\Gamma[\mu', m',\bar \mu', \bar m'] (\mu, m) = \sum_{t=0}^T\sum_{u\in \Omega_t}p_t(u)\int_{E_1}g(t)\mu_t(u)(dx) + \\\sum_{t=0}^{T-1}\sum_{u\in \Omega_t}p_t(u)\int_{E_1}f(t, x, \Psi_t(u), P(t, \Psi_t(u), d_t^p(u), d_t^o(u), m_t'(u), \bar m_t'(u)))m_t(u)(dx) .    
\end{multline*}
Analogously, we define the reward functional of the renewable producer associated to $(\mu', m',\bar \mu', \bar m')$, $\bar\Gamma[\mu', m',\bar \mu', \bar m']: \bar{\mathcal{R}}\rightarrow \mathbb R$, by
\begin{multline*}
\bar\Gamma[\mu', m',\bar \mu', \bar m'] (\bar\mu, \bar m) = \sum_{t=0}^T\sum_{u\in \Omega_t}p_t(u)\int_{E_2}\bar g(t)\bar \mu_t(u)(dx) \\+ \sum_{t=0}^{T-1}\sum_{u\in \Omega_t}p_t(u)\int_{E_2}\bar f(t, x, P(t, \Psi_t(u), d_t^p(u), d_t^o(u), m_t'(u), \bar m_t'(u)))\bar m_t(u)(dx).    
\end{multline*}
\end{definition}

\noindent We aim to apply Theorem \ref{nash K} with two populations in order to show the existence of an LP MFG Nash equilibrium in our framework. Let us verify Assumption \ref{assump existence K pop}. In our setting we do not have absorption of the state processes so that $D_1=E_1$ and $D_2=E_2$. The transition kernels are continuous since we work with finite sets endowed with the discrete topology. Let us show the continuity of the reward functions. The function $f$ is continuous using that $G$ is continuous and the functions $\bar f$, $g$ and $\bar g$ are obviously continuous. Now the function $(t, z, d, m, \bar m) \mapsto \bar P(t, z, d, m, \bar m)$ is continuous by Lemma \ref{price cont}. Assumption \ref{assump existence K pop} is verified and by Theorem \ref{nash K} there exists an LP MFG Nash equilibrium. We present now a uniqueness result for the price in an equilibrium situation. The proof follows the same ideas as in \cite{adt2021} and is therefore omitted to save space.

\begin{proposition}
Assume that the function $S^b$ is $C^1$ on $\mathbb R_+$ and its derivative is bounded below by some constant $c_b>0$. If $(\mu^1, m^1, \bar\mu^1, \bar m^1), (\mu^2, m^2, \bar\mu^2, \bar m^2) \in \mathcal{R}\times \bar{\mathcal{R}}$ are two LP MFG Nash equilibria, then for all $t\in I\setminus\{T\}$ and $u\in \Omega_t$,
\begin{align*}
P(t, \Psi_t(u), d_t^p(u), d_t^o(u), m_t^1(u), \bar m_t^1(u)) = P(t, \Psi_t(u), d_t^p(u), d_t^o(u), m_t^2(u), \bar m_t^2(u)).
\end{align*}
\end{proposition}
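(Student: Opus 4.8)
The plan is to recast the two equilibria as fixed points of a single price-response map and exploit its built-in negative feedback, closing with the strict monotonicity coming from $(S^b)'\ge c_b>0$. To set up, I encode a candidate price by a family $p=(p_{t,u})_{t\in I,\,u\in\Omega_t}$ with each $p_{t,u}\in[0,p_{max}]^2$. Given such $p$, I solve the two optimal stopping problems \eqref{opti strong 1} and \eqref{opti strong 2 bis} on the common probability space carrying $B,R,Z,U$, obtain optimal stopping times, and read off the induced conditional occupation measures $m^{(p)}_t(u)$ and $\bar m^{(p)}_t(u)$; feeding these back through the merit-order rule yields a new price field $\Phi(p)_{t,u}:=P(t,\Psi_t(u),d_t^p(u),d_t^o(u),m^{(p)}_t(u),\bar m^{(p)}_t(u))$. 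By construction an equilibrium price field is exactly a fixed point of $\Phi$, so the two equilibria $(\mu^1,m^1,\bar\mu^1,\bar m^1)$ and $(\mu^2,m^2,\bar\mu^2,\bar m^2)$ give two fixed points $p^1,p^2$ of $\Phi$, and it suffices to prove $p^1=p^2$.

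The argument then rests on two monotonicities. First, since $G$ is non-decreasing, the flow reward $f$ is non-decreasing and $\bar f$ non-increasing in the price argument; a pathwise comparison of the reward processes in the stopping problems (performed on the common space, with the same $B,R,Z,U$ for both equilibria, and with a consistent selection such as the largest optimal stopping time) shows that a pointwise-larger field $p$ makes the conventional producer stop weakly later and the renewable producer enter weakly earlier. Conditioning on $\{U_t=u\}$, this gives that $m^{(p)}_t(u)$ is non-decreasing and $\bar m^{(p)}_t(u)$ non-increasing in $p$. Second, from the explicit formulas $m\mapsto S^{c}$ and $p\mapsto S^{c}$ are non-decreasing (as $\bar\alpha\ge 0$) while $\bar m\mapsto S^{r}$ is non-increasing (as $x\ge 0$ on $E_2$), so $\bar P$ is non-increasing in $m$ and non-decreasing in $\bar m$. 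Composing the two shows $\Phi$ is \emph{antitone}: raising the price lowers the response price through both populations.

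To conclude, I work componentwise and set $Q:=p^1\vee p^2$ and $q:=p^1\wedge p^2$. Since $Q\ge p^i$, antitonicity of $\Phi$ together with $\Phi(p^i)=p^i$ yields $\Phi(Q)\le p^1\wedge p^2=q$, and symmetrically $\Phi(q)\ge Q$, so $\Phi(Q)\le q\le Q\le\Phi(q)$. Here the hypothesis $(S^b)'\ge c_b>0$ enters decisively: it makes $p\mapsto S^{c}(z,m,p)$ a strictly increasing homeomorphism below the cap, so that $\bar P$ is a strictly monotone function of the residual demand $(d-S^r)^+$. This strictness is what upgrades the weak sandwich to equality — either by showing that a strict gap $Q>q$ at some $(t,u)$ would propagate into a strict reversal through $\Phi$ and contradict $\Phi(Q)\le q\le Q\le\Phi(q)$, or by running the monotone iteration $q_{n+1}=\Phi(Q_n)$, $Q_{n+1}=\Phi(q_n)$ and using the contraction afforded by $c_b$ to force the two monotone envelopes to coincide. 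In either case $Q=q$, i.e. $p^1=p^2$, which is precisely the asserted equality of prices.

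The main obstacle I anticipate is the forward–backward coupling: the occupation measure $m^{(p)}_t(u)$ depends on the \emph{entire future} of the price field through the forward-looking optimal stopping rule, so the monotone comparison cannot be localized in time and a naive induction on $t$ fails; this is exactly why the comparison must be carried out globally through the antitone map $\Phi$ rather than time slice by time slice. The second delicate point is the rigorous passage from weak to strict inequalities, where the lower bound $c_b$ on $(S^b)'$ is indispensable and where one must separately handle the saturated region $\{p=p_{max}\}$, on which the prices coincide trivially and the comparison is vacuous.
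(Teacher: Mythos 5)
Your proposal does not close, and the two places where it fails are exactly the two places you flag as ``delicate.'' First, the reduction of the equilibria to fixed points of your selection-based map $\Phi$ is unjustified: the objects in the statement are \emph{LP} equilibria, whose measures $(m^i,\bar m^i)$ correspond in general to \emph{randomized} stopping times (Theorem \ref{proba rep U}), not to any pure selection such as the largest optimal stopping time. In a war-of-attrition structure like this one, randomization is precisely why the LP relaxation is used, so there is no reason that $m^i=m^{(p^i)}$ for your selection, i.e.\ $\Phi(p^i)=p^i$ may simply be false; repairing this forces $\Phi$ to be set-valued, and your pathwise monotone comparison of stopping times (which compares a fixed selection across two reward fields) no longer applies. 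Second, and more fundamentally, the concluding step is a gap, not a technicality: for an \emph{antitone} map, the sandwich $\Phi(Q)\le q\le Q\le\Phi(q)$ is perfectly consistent with $Q>q$ --- antitone maps generically admit $2$-cycles $(q,Q)$ with $\Phi(Q)=q$, $\Phi(q)=Q$, and strict antitonicity only yields $\Phi(Q)<\Phi(q)$, which does not contradict the sandwich. Your fallback, ``the contraction afforded by $c_b$,'' is not available either: the bound $(S^b)'\ge c_b$ gives Lipschitz control of the price as a function of the supply measures, but the other half of the composition --- the map from a price field to the occupation measures through optimal stopping --- has no Lipschitz or even continuity modulus (optimal stopping rules jump under arbitrarily small perturbations of the reward), so no contraction constant for $\Phi$ can be extracted. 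Games with strategic substitutes (negative feedback, as here) are exactly the class where lattice/monotone-map arguments do \emph{not} deliver uniqueness without additional structure.

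The paper itself omits the proof and refers to \cite{adt2021}, where the argument is a Lasry--Lions-type monotonicity computation that sidesteps both obstacles: one writes the four optimality inequalities ($\Gamma[\mathrm{eq}^i](\mu^i,m^i)\ge\Gamma[\mathrm{eq}^i](\mu^j,m^j)$ and likewise for $\bar\Gamma$, $i\ne j$), sums them --- the terms in $g,\bar g$ cancel since they do not depend on the mean field --- and uses the convexity of $G$ (recall $G'=\bar\alpha$ is nondecreasing) together with the market-clearing identity $(d-S^r)^+=S^c(z,m,p)$ below the price cap to bound the resulting expression by $-c_b$ times a weighted sum of squared price differences; nonnegativity of the sum then forces the prices to coincide at every $(t,u)$, with the capped region handled trivially since there the two prices both equal $p_{max}$. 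Note that this argument works directly with the LP measures (randomized strategies included), requires no selection of optimizers and no comparative statics of stopping times, and uses $c_b$ exactly where it has traction. If you want to salvage a fixed-point-flavored uniqueness proof, you would need to verify a monotonicity condition of Lasry--Lions type anyway, at which point the direct computation is shorter.
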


\subsection{Numerical computation via the Linear Programming Fictitious Play algorithm}
\label{numerics}
The Linear Programming Fictitious Play algorithm is presented in detail in \cite{dlt2022} for the case of continuous time MFGs without common noise and with one class of agents. We present here an algorithm (Algorithm \ref{algo lpfp}) for discrete time MFGs in the common noise setting and with two populations. 

\begin{algorithm}
\caption{LPFP algorithm}\label{algo lpfp}
\SetAlgoLined
\KwData{A number of steps $N_{iter}$ for the equilibrium approximation; a pair $(\mu^{(0), g}, m^{(0), g})\in \mathcal{R}$; a pair $(\bar\mu^{(0), g}, \bar m^{(0), g})\in \bar{\mathcal{R}}$}
\KwResult{Approximate LP MFG Nash equilibrium}
\For{$\ell=0, 1, \ldots, N_{iter}-1$}{
Compute a linear programming best response $(\mu^{(\ell+1)}, m^{(\ell+1)})$ to $(\mu^{(\ell), g}, m^{(\ell), g}, \bar\mu^{(\ell), g}, \bar m^{(\ell), g})$ by solving the linear programming problem
$$\argmax_{(\mu, m)\in \mathcal{R}}\Gamma[\mu^{(\ell), g}, m^{(\ell), g}, \bar\mu^{(\ell), g}, \bar m^{(\ell), g}](\mu, m).$$
\\
Compute a linear programming best response $(\bar\mu^{(\ell+1)}, \bar m^{(\ell+1)})$ to $(\mu^{(\ell), g}, m^{(\ell), g}, \bar\mu^{(\ell), g}, \bar m^{(\ell), g})$ by solving the linear programming problem
$$\argmax_{(\bar\mu, \bar m)\in \bar{\mathcal{R}}}\bar\Gamma[\mu^{(\ell), g}, m^{(\ell), g}, \bar\mu^{(\ell), g}, \bar m^{(\ell), g}](\bar\mu, \bar m).$$
\\
Set $(\mu^{(\ell+1), g}, m^{(\ell+1), g}):=\frac{\ell}{\ell+1}(\mu^{(\ell), g}, m^{(\ell), g}) + \frac{1}{\ell+1}(\mu^{(\ell+1)},  m^{(\ell+1)})$. 
\\
Set $(\bar\mu^{(\ell+1), g}, \bar m^{(\ell+1), g}):=\frac{\ell}{\ell+1}(\bar \mu^{(\ell), g}, \bar m^{(\ell), g}) + \frac{1}{\ell+1}(\bar \mu^{(\ell+1)},  \bar m^{(\ell+1)})$. 
}
\end{algorithm}

The computation of the best responses in the algorithm is done by solving the linear programming problems. Observe that by linearity, the constraint defining the set $\mathcal{R}$ (resp. $\bar{\mathcal{R}}$) is satisfied for all $\varphi\in C(I\times E_1\times W)$ (resp. $C(I\times E_2\times W)$) if and only if it is satisfied for the subset of indicator functions, which gives us a finite number of constraints. Henceforth, the algorithm consists in iteratively solving finite dimensional linear programming problems, for which there exist fast solvers\footnote{see \url{http://plato.asu.edu/ftp/lpsimp.html} for a comparison of different LP solvers.}. We solve the LP problems by using the Gurobi\footnote{\url{https://www.gurobi.com/}} solver in Python (Version 9.5.1). 

\section{Illustration}
\label{illustration}
\subsection{Parameters}

We specify here all the parameters used in our model. We refer to \cite{adt2021} for the calibration of the parameters non-related to the carbon price.

\paragraph{Time:} We set the time horizon to $T_0=18$ years and the time steps to $3$ months, i.e. $T= 72$ and $\Delta_t=0.25$. We set the discount rate to $\rho=0.086$.

{\paragraph{Common noise:} To cover the range of carbon prices across the scenarios, we fix the possible values of the carbon price to be $H=\{50, 75, 100, 125, 150, 175, 200\}$ and denote its elements by $z_0<z_1<\ldots<z_6$. We assume that the carbon price $Z$ starts at $z_0$ in every scenario, and then, in each scenario $s\in \mathcal{S}$ at each adjustment date $t\in J$ can stay in the same state with probability $p_t(s)\in [0, 1]$ or jump to the next state with probability $1-p_t(s)$. The adjustment dates of the carbon price are $J = \{10,20,30,40,50,60\}$; with $\Delta_t = 0.25$ this means that the carbon price adjustment happens every $2.5$ years. More precisely, the dynamics of $Z$ are as follows: for $t\in J$ and $i=0,1,\dots5 $, we set $\pi_t^Z(s, z_i;z_{i})=p_t(s)$ and $\pi_t^Z(s, z_i;z_{i+1})=1-p_t(s)$, otherwise we set $\pi_t^Z(s, z_i;z_i)=1$. 
We recall that the carbon price in our model is a proxy for all costs carbon emitting companies face in the context of energy transition, rather than a specific emission trading scheme price, thus it is natural to model it as an increasing process.

In order to quantify the impact of scenario uncertainty, we will consider two different settings. 
\begin{itemize}
\item \textbf{Setting 1:} In the first setting, there are two possible scenarios $\mathcal{S}=\{0, 1\}$ with uniform prior distribution. In the first scenario $S=0$, the probabilities $(p_t(0))_{t\in J}$ are all equal to $0.9$, meaning that with high probability the carbon price will stay low. In the second scenario $S=1$, the probabilities $(p_t(1))_{t\in J}$ are all equal to $0.1$, meaning that with high probability the carbon price will increase at the adjustment dates. 
\item \textbf{Setting 2:} In the second setting, there is only one scenario $\mathcal{S}=\{0\}$, meaning that there is no uncertainty on the scenario, and the probabilities $(p_t(0))_{t\in J}$ are all equal to $0.5$, implying that each carbon price trajectory has equal probability.
\end{itemize}}

\paragraph{Conventional dynamics:} Following \cite{adt2021}, we take $C_{\text{min}}=0$, $C_{\text{max}}=70$, $\theta = 33.4-\tilde\beta z_0$, $k = 0.5$, $\delta=\text{std}_C\sqrt{\frac{2k}{\theta}}$ where $\text{std}_C=11$. In the spirit of \cite{kushner2001} p. 96, we choose $\Delta x$ so that
$$\frac{\Delta x^2}{\sigma^2(x) + \Delta x|b(x)|}\approx \Delta_t.$$
We evaluate $\sigma$ and $b$ at $x=\theta$, which is the expectation of the stationary distribution of the CIR process. Since $b(\theta)=0$, we obtain $\Delta x\approx \sigma(\theta)\sqrt{\Delta_t} = \delta \sqrt{\theta\Delta_t}$. We define $n_1=\lfloor (C_{\text{max}}-C_{\text{min}})(\delta \sqrt{\theta\Delta_t})^{-1}\rfloor$ and then $\Delta x=(C_{\text{max}}-C_{\text{min}})/n_1$. We finally define $m_0^*$ as the discretization (by evaluating the density on the grid and then normalizing) of a gamma distribution with shape parameter $2\theta k/\delta^2$ and scale parameter $\delta^2/2k$, which is the stationary distribution of the CIR process.

\paragraph{Renewable dynamics:} Once again, following \cite{adt2021}, we take $R_{\text{min}}=0.3$, $R_{\text{max}}=0.6$, $\bar\theta = 0.43$, $\bar k = 0.5$, $\bar\delta=\text{std}_R\sqrt{\frac{2\bar k}{\bar\theta(1-\bar\theta)-\text{std}_R^2}}$ where $\text{std}_R=0.044$. We define $n_2=\lfloor (R_{\text{max}}-R_{\text{min}})(\bar\delta \sqrt{\bar\theta(1-\bar\theta)\Delta_t})^{-1}\rfloor$ and set $\Delta \bar x=(R_{\text{max}}-R_{\text{min}})/n_2$. We define $\bar m_0^*$ as the discretization of a beta distribution with shape parameters $2\bar k\bar \theta/\bar\delta^2$ and scale parameter $2\bar k(1-\bar \theta)/\bar \delta^2$, which is the stationary distribution of the Jacobi process.

{\paragraph{Demand:} As in the paper \cite{adt2021}, we use the peak/off-peak ratio $r_d=1.29$ and we take the seasonal cycle $\lambda_t$ to be equal to 1.10 in the first quarter, 0.93 in the second quarter, 0.91 in the third quarter and 1.06 in the last quarter. We fix the carbon price sensitivity of the demand $\beta$ in an ad hoc manner to $\beta = 0.015$. Finally, we take the baseline demand level $d$ from the British government demand projections\footnote{\url{https://www.gov.uk/government/publications/updated-energy-and-emissions-projections-2019}, Annex F.}.}

\paragraph{Capacity levels:} As in \cite{adt2021}, we use the conventional baseline supply function $S^b(p)=\frac{12.1}{p_{max}}p$ where $p_{max}=150$, the conventional capacity $I_C=35.9$, installed renewable capacity $I_R^b=35.6$ and potential renewable capacity $I_R=47$.

\paragraph{Conventional rewards:} We set the emission intensity  to $\tilde\beta = 0.429\footnote{see \url{https://www.rte-france.com/en/eco2mix/co2-emissions}}$. As in \cite{adt2021}, we use the fixed operating cost $\kappa_C=30$ (in GBP per kW per year), scrap value of the plant $K_C=0$ and an ad hoc supply function $\bar\alpha:\mathbb R\rightarrow [0, 1]$ defined by
$$\bar\alpha(y)=\mathds{1}_{y> c_{max}}+\frac{1}{2}\left(1+\sin\left(-\frac{\pi}{2} + \frac{\pi}{c_{max}}y\right)\right)\mathds{1}_{0\leq y\leq c_{max}},$$
where $c_{max}=0.5$ corresponds to $c(1)$. This choice of $\bar\alpha$ leads to 
\begin{align*}G(x) =\int_0^x\bar\alpha(y)dy & =\left(x-\frac{c_{max}}{2}\right)\mathds{1}_{x>c_{max}}\\
&+\frac{1}{2}\left(x-\frac{c_{max}}{\pi}\cos\left(-\frac{\pi}{2}+\frac{\pi}{c_{max}}x\right)\right)\mathds{1}_{0\leq x\leq c_{max}}.
\end{align*}

\paragraph{Renewable rewards:} As in \cite{adt2021}, we use the fixed operating cost $\kappa_R=17.21$ GBP per kW per year, investment cost $K_R=1377$ GBP per kW of capacity and depreciation rate $\gamma_R=\ln(2)/10$.

\subsection{Results}

In our setting, there is a total number of $2^6 = 64$ possible trajectories of the carbon price $Z$. We select the lowest carbon price trajectory $\omega_{min}=(50, \ldots, 50)$ and the highest carbon price trajectory $\omega_{max}=(50, 75, 100, 125, 150, 175, 200)$ to plot the quantities of interest conditionally on $Z$ taking these paths. For comparison, we also solve the MFG model without common noise where the carbon price follows the (deterministic) trajectories $\omega_{min}$ and $\omega_{max}$ with probability 1. 
In the latter case, the agents know the exact trajectory of the carbon price which they use for choosing their stopping times, whereas in the stochastic case the realized carbon price trajectory is the same, but, when making their decisions, the agents take into account the risk of a carbon price rise at each date. The difference between the stochastic and deterministic case therefore quantifies the impact of climate policy uncertainty on the agents' behavior.  

In Figure \ref{error} we represent the exploitabilities of the conventional and the renewable producers, that is, the gain of the agents from switching from the current iteration to the best response, in log-log scale (base 10), as function of the number of iterations.  The exploitability is the standard measure of convergence for this type of algorithms (see \cite{dlt2022} for a precise definition in the LP approach), which shows how far the current iteration is from the optimum. For a convergent algorithm, the exploitability should tend to zero as function of the number of iterations, and from the graphs we see, at least empirically, that this happens at the rate of $O(N^{-1})$.

\begin{figure}[h]
    \centering
    \subfloat{%
        \includegraphics[width=0.5\textwidth]{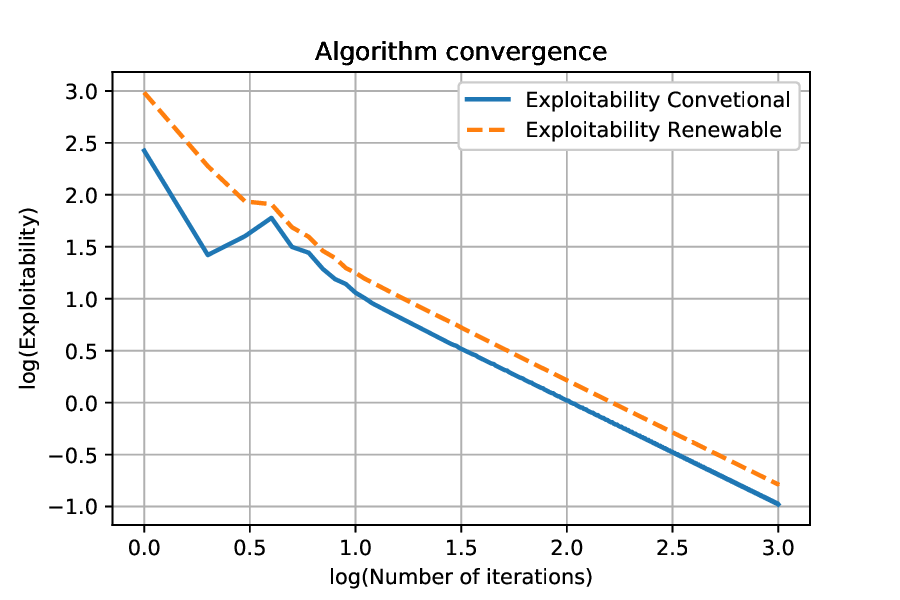}%
        \label{error 1}%
        }%
    \subfloat{%
        \includegraphics[width=0.5\textwidth]{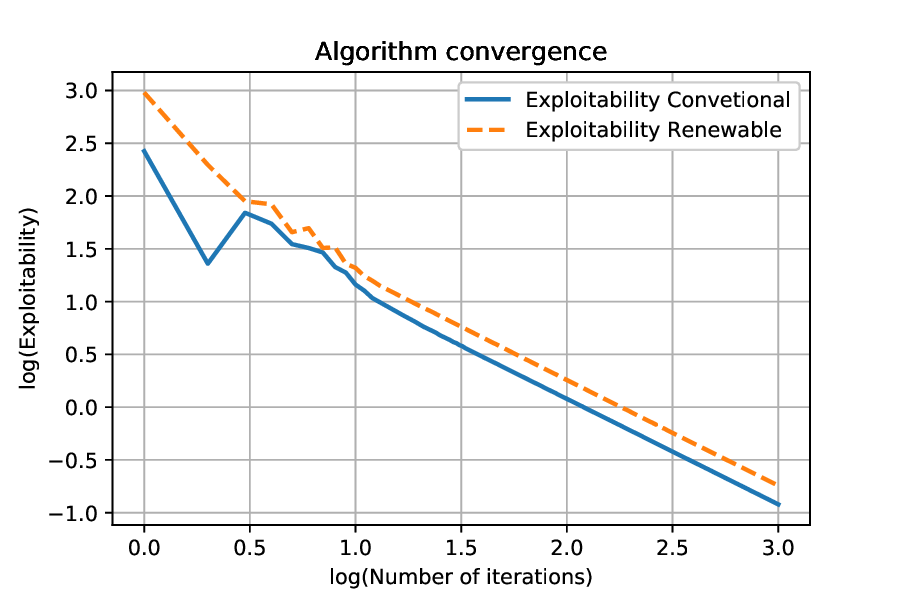}%
        \label{error 2}%
        }%
    \caption{Convergence of the algorithm (left: Setting 1, right: Setting 2): log-log plot (base 10) of the exploitability.}
    \label{error}
\end{figure}

{Figure \ref{capacity} shows the total installed capacity of conventional and renewable producers as function of time for our sample trajectories $\omega_{min}$ and $\omega_{max}$ of the carbon price, for the MFG without common noise, as well as the expectation over all possible trajectories of the carbon price. As expected, higher carbon prices lead to faster growth of renewable penetration and faster departures of conventional producers. In the presence of ``upward" uncertainty (when the carbon price is stochastic and follows the lowest carbon price trajectory $\omega_{min}$), the renewable power plants are built earlier, and the conventional capacity decreases faster than when the price follows the deterministic trajectory $\omega_{min}$, even if the realized carbon price trajectory is the same in both cases. When there is a credible possibility of a carbon price rise, even if this possibility is not realized, conventional producers exit the market to avoid the risk of paying a high carbon price, and renewable producers enter the market to benefit from potential opportunities created by the departure of the conventional producers. Similarly, in the presence of ``downward" uncertainty (when the carbon price is stochastic and follows the highest carbon price trajectory $\omega_{max}$), the conventional producers leave later, and the renewable plants are built later than with a deterministic trajectory. 

Comparing top and bottom graphs of Figure \ref{capacity} illustrates the impact of scenario uncertainty as compared to a random carbon price trajectory within a single scenario. In the top graphs, there are two scenarios, but after the first price adjustment date the uncertainty resolves almost completely (if there is an upward price move, we are very likely to be in the high-price scenario); as a result, after this date the trajectories of conventional and renewable installed capacity are close to the deterministic ones. On the other hand, in the bottom graphs, even after the first adjustment date, the price is still random and the capacities are further away from the ones corresponding to deterministic price trajectories. 


Figure \ref{proportions}   shows the peak and off-peak prices for our sample trajectories $\omega_{min}$ and $\omega_{max}$ of the carbon price, for the MFG without common noise, as well as the expectation over all possible trajectories of the carbon price. As expected, higher carbon prices lead to higher peak electricity prices. Interestingly, the impact on off-peak prices is less clear: higher carbon prices increase renewable penetration, which leads to lower electricity prices in summer, when demand is low, but they also increase the costs of conventional generation, which may push winter prices up, even in off-peak periods. 

Carbon price uncertainty, especially that of the ``upward" kind, although it does not modify the production costs directly, pushes some conventional producers, anticipating potential carbon price hikes, out of the market, which leads to higher peak prices compared to a deterministic trajectory. This effect is stronger in Setting 2, where price uncertainty remains for the entire period than in Setting 1, where uncertainty resolves after the first price adjustment date. On the other hand, off-peak prices in summer are somewhat lower in the presence of carbon price uncertainty than for a deterministic carbon price trajectory. This is due to the fact that in these periods generation is predominantly renewable, and under carbon price uncertainty renewable penetration is higher than for the deterministic price path. }

\begin{figure}[h]
    \centering
    \subfloat{%
        \includegraphics[width=0.565\textwidth]{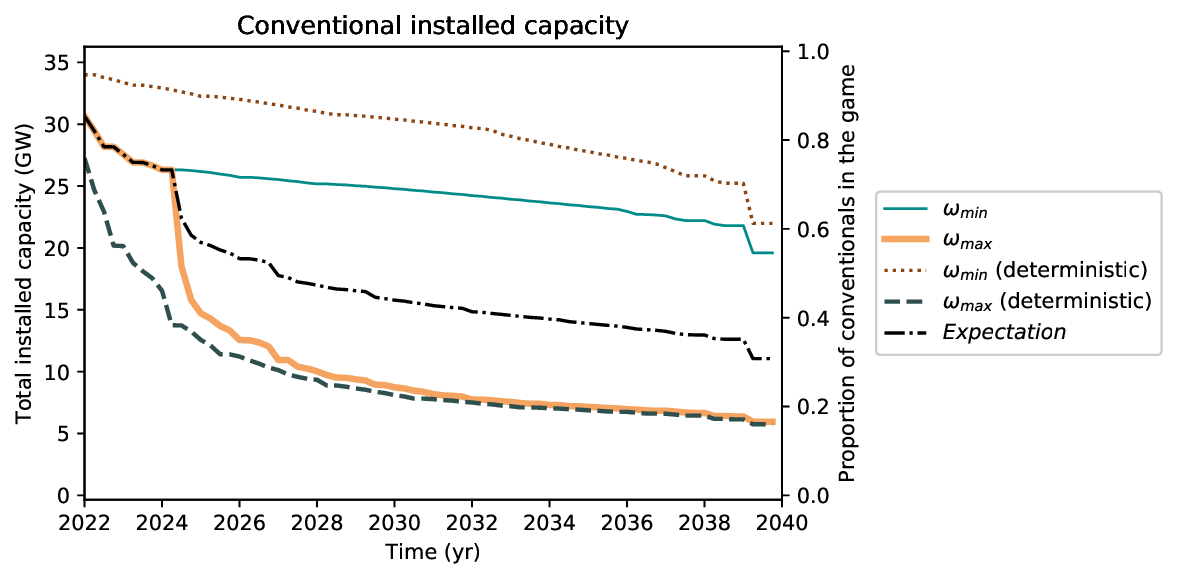}%
        \label{prop_conv 1}%
        }%
    \subfloat{%
        \includegraphics[width=0.435\textwidth]{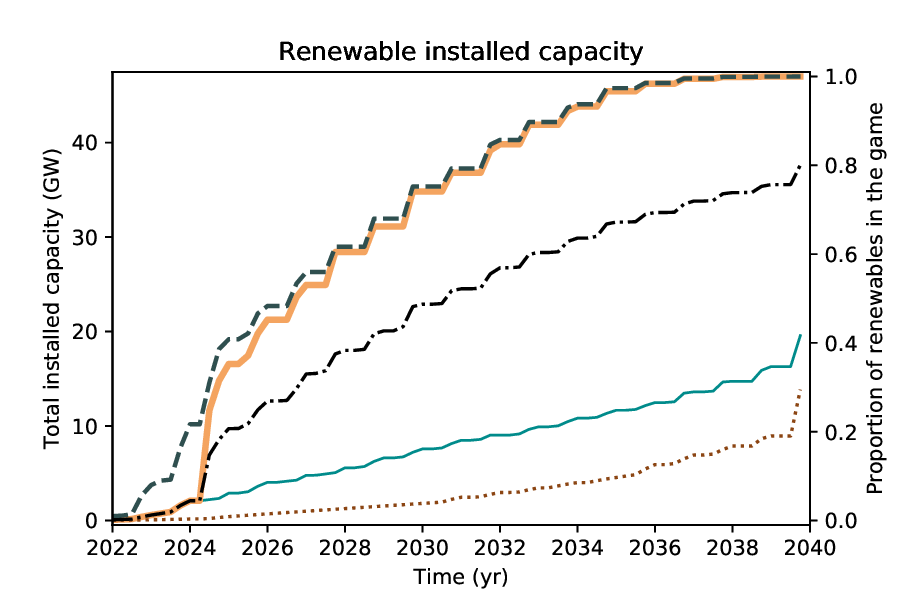}%
        \label{prop_ren 1}%
        }%
        \\
        
        \subfloat{%
        \includegraphics[width=0.565\textwidth]{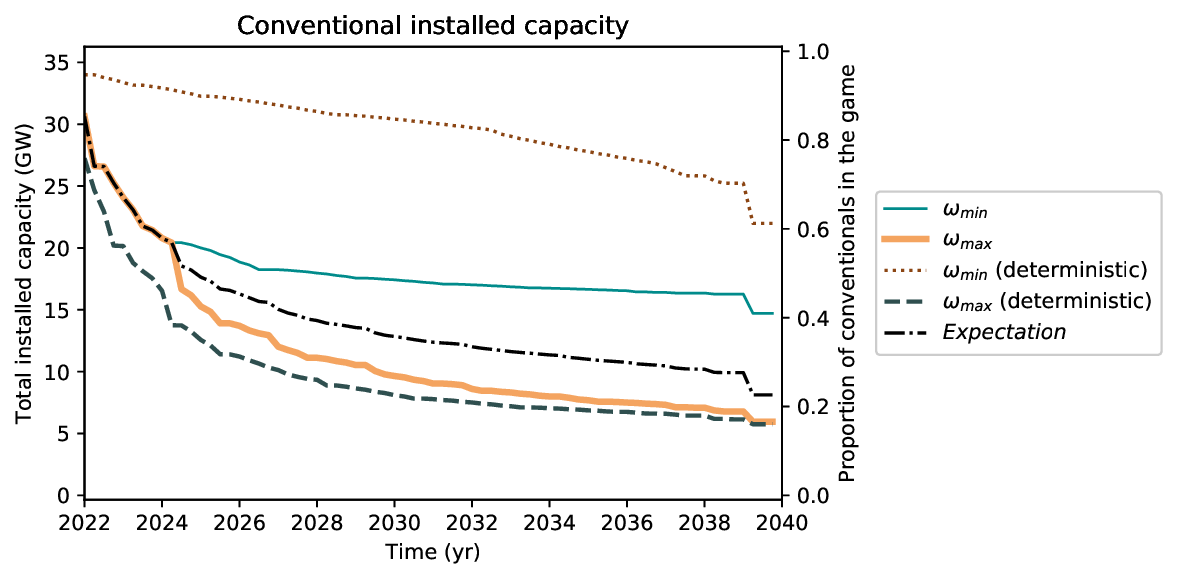}%
        \label{prop_conv 2}%
        }%
    \subfloat{%
        \includegraphics[width=0.435\textwidth]{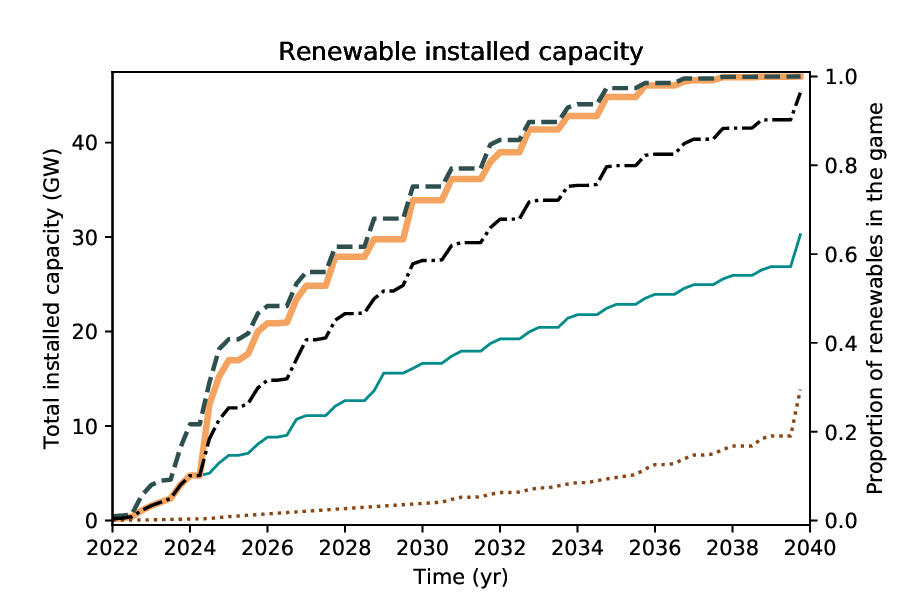}%
        \label{prop_ren 2}%
        }%
    \caption{Total installed capacities and proportions of conventional and renewable producers. Top: Setting 1, bottom: Setting 2.}
    \label{capacity}
\end{figure}

%

\begin{figure}[h]
    \centering
    \subfloat{%
        \includegraphics[width=0.554\textwidth]{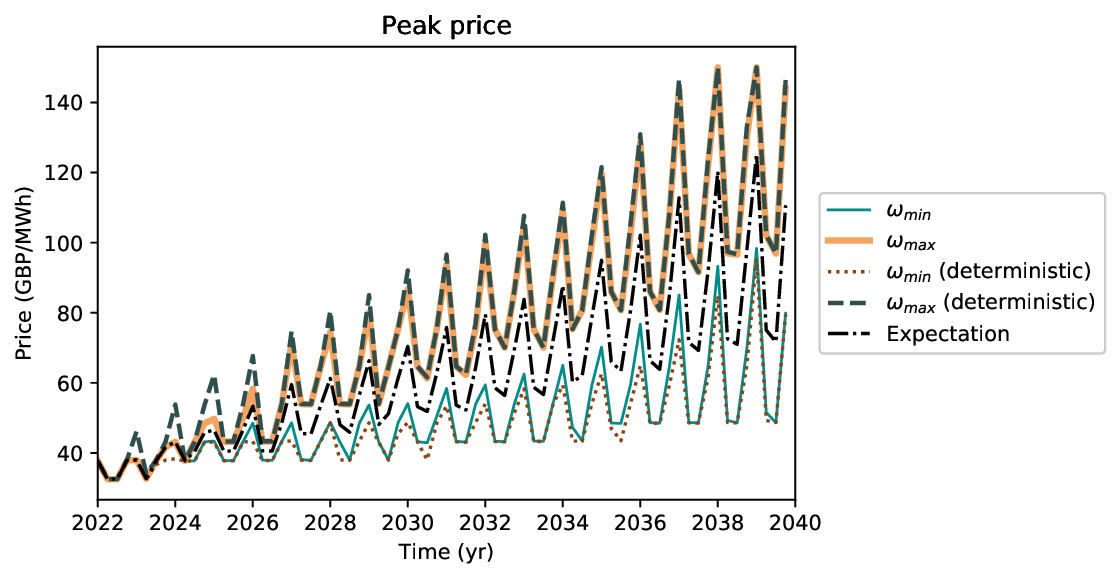}%
        \label{p_price 1}%
        }%
    \hfill%
    \subfloat{%
        \includegraphics[width=0.446\textwidth]{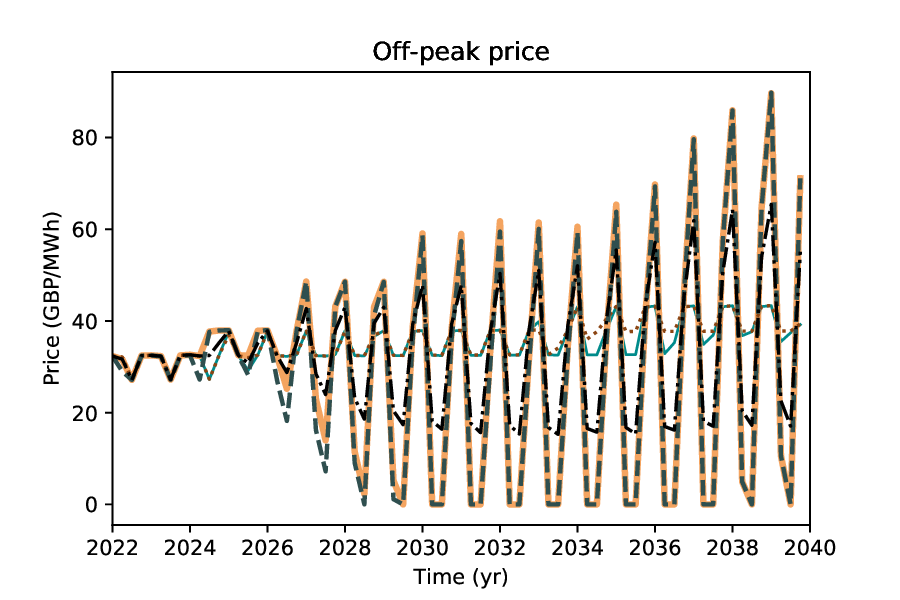}%
        \label{op_price 1}%
        }%
        \\
    \subfloat{%
        \includegraphics[width=0.554\textwidth]{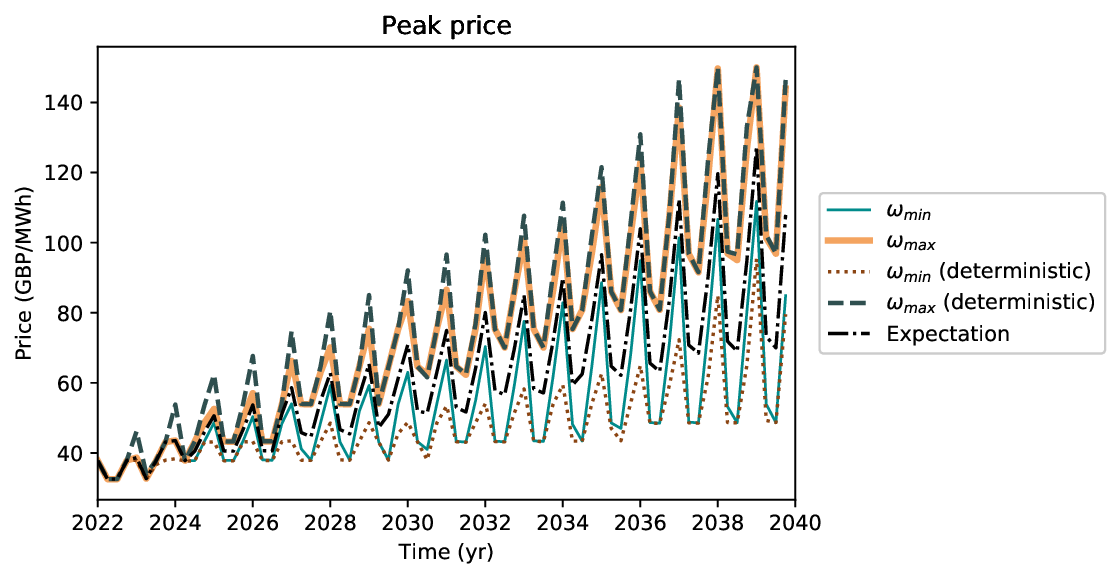}%
        \label{p_price 2}%
        }%
    \hfill%
    \subfloat{%
        \includegraphics[width=0.446\textwidth]{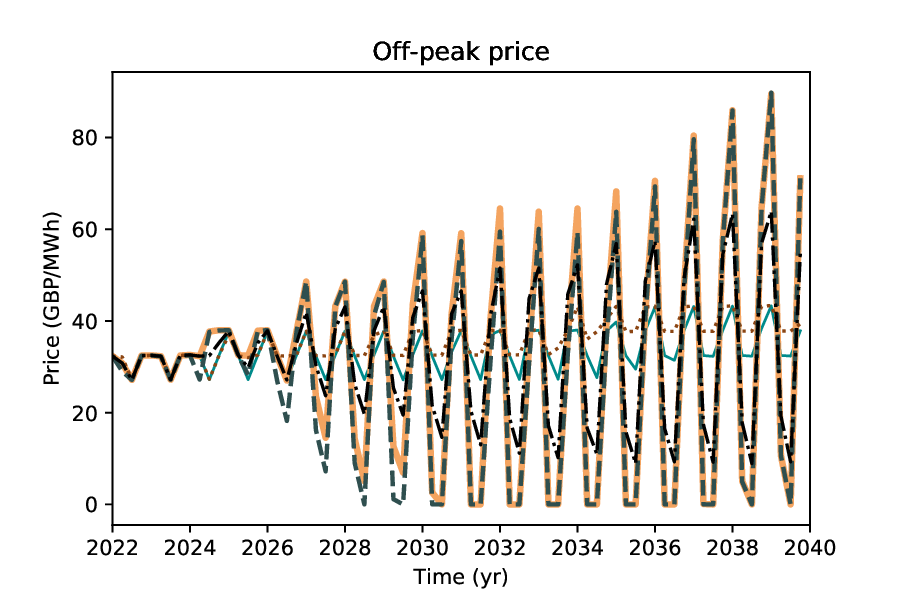}%
        \label{op_price 2}%
        }%
    \caption{Peak and Off-peak prices. Top: Setting 1, bottom: Setting 2.}
    \label{proportions}
\end{figure}

%
%
%

\section*{Statements and declarations}
\subsection*{Competing interests}
The authors have no competing interests to declare that are relevant to the content of this article.
\subsection*{Funding}
Peter Tankov gratefully acknowledges financial support from the ANR (project EcoREES ANR-19-CE05- 0042) from ADEME (Agency for Ecological Transition) in the context of SECRAET project, and from the FIME Research Initiative of the Europlace Institute of Finance.
\subsection*{Data availability}
The electricity demand projections used in this paper are available from \url{https://www.gov.uk/government/publications/updated-energy-and-emissions-projections-2019}, Annex F. The NGFS scenarios used for illustrative purposes (Figure \ref{pricedemand}) are available from \url{https://data.ene.iiasa.ac.at/ngfs/}.
\appendix
\section{Complementary results on the linear programming approach for MFGs of optimal stopping}

In this appendix, we provide a theoretical analysis of the linear programming approach in discrete time in an abstract and general case. We show by a duality argument that the relaxed set is equal to the closed convex hull of the set induced by Markovian stopping times. In particular, this result allows us to derive a probabilistic representation of the relaxed set in terms of randomized stopping times (in the sense  of \cite{carmona2017}).

\paragraph{Markovian and randomized stopping times in the canonical space}

We consider a discrete time setting with horizon $T\in \mathbb N^*$ and we let $I=\{0, \ldots, T\}$ be the set of time indices and $I^*=\{1, \ldots, T\}$. We are given a nonempty compact metric space $(E, d)$ and consider two canonical spaces: $\Omega_X=E^I$ and $\overline \Omega = E^I\times I$, where $E^I$ is the set of functions from $I$ to $E$ endowed with the pointwise convergence topology (which coincides with the uniform convergence topology here). Denote by $\mathcal{B}(\Omega_X)$ and $\mathcal{B}(\overline \Omega)$ the respective Borel $\sigma$-algebras. Let $X$ be the identity map on $\Omega_X$ and let $\overline X$ and $\overline \tau$ be the projections on $\overline\Omega$, i.e.
$\overline X(\overline \omega)=x$ and $\overline \tau(\overline \omega)=\theta$ for each $\overline\omega=(x, \theta)\in \overline \Omega$. On $(\Omega_X, \mathcal{B}(\Omega_X))$ we define the filtration $\mathcal{F}^X_t:=\sigma(X_s:s\leq t)$, for $t\in I$, and on $(\overline \Omega, \mathcal{B}(\overline\Omega))$ we define the filtrations $\mathcal{F}^{\overline X}_t:=\sigma(\overline X_s:s\leq t)$ and $\mathcal{F}^{\overline \tau}_t:=\sigma(\{\overline\tau=0\}, \ldots, \{\overline\tau=t\})$, for $t\in I$. Note that $\mathbb F^{\overline\tau}$ is the smallest filtration for which $\overline\tau$ is a stopping time. Define $\overline{\mathcal{F}}_t:=\mathcal{F}^{\overline X}_t\vee \mathcal{F}^{\overline\tau}_t$, for $t\in I$. For any probability $\op$ on $(\overline\Omega, \mathcal{B}(\overline\Omega))$, we denote by $\mathbb F^{\overline X, \op}=(\mathcal{F}_t^{\overline X, \op})_{t\in I}$ and $\overline{\mathbb F}^{\op}=(\overline{\mathcal F}^{\op}_t)_{t\in I}$ the filtrations
$$\mathcal{F}_t^{\overline X, \op}:=\mathcal{F}_t^{\overline X}\vee \mathcal{N}_{\op}(\overline{\mathcal{F}}_T), \quad \overline{\mathcal{F}}_t^{\op}:=\overline{\mathcal{F}}_t \vee \mathcal{N}_{\op}(\overline{\mathcal{F}}_T).$$
We are given transition kernels $(\pi_t)_{t\in I^*}$ on $E$ and an initial law $m_0^*$ on $E$. We assume that the transition kernels are continuous as maps from $E$ to $\mathcal{P}(E)$. Let $\nu\in \mathcal{P}(E^I)$ be the unique law of the Markov chain with transition kernels $(\pi_t)_{t\in I^*}$ and initial law $m_0^*$. We denote by $\mathbb F^{X, \nu}=(\mathcal{F}_t^{X, \nu})_{t\in I}$ the filtration
$$\mathcal{F}_t^{X, \nu}:=\mathcal{F}_t^{X}\vee \mathcal{N}_{\nu}(\mathcal{F}_T^X).$$

\begin{definition}
Denote by $\mathcal{T}$ the set of measurable functions $\tau:\Omega_X\rightarrow I$ such that $\tau$ is an $\mathbb F^{X, \nu}$-stopping time.
\end{definition}

\begin{definition}
Let $\mathcal{A}_0$ be the set of probabilities $\op$ on $(\overline\Omega, \mathcal{B}(\overline\Omega))$ such that
\begin{enumerate}[(1)]
\item Under $\op$, $\overline X$ is a Markov chain with initial distribution $m_0^*$ and transition kernels $(\pi_t)_{t\in I^*}$.
\item $\overline \tau$ is an $\mathbb F^{\overline X, \op}$-stopping time.
\end{enumerate}
\end{definition}

\begin{lemma}[\textit{Characterization of $\mathcal{A}_0$ by Markovian stopping times}]\label{disint Markov}
It holds that $$\mathcal{A}_0=\{\nu(dx)\delta_{\tau(x)}(d\theta): \tau\in \mathcal{T}\}.$$
\end{lemma}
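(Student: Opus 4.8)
The plan is to prove the two inclusions separately, the workhorse in both directions being the fact that $\overline X:\overline\Omega=\Omega_X\times I\to\Omega_X$ is simply the coordinate projection, so that for any $\op\in\mathcal A_0$ one has $\op\circ\overline X^{-1}=\nu$ by condition (1), while $\mathcal F_t^{\overline X}=\{S\times I: S\in\mathcal F_t^X\}$. Consequently, for $S,B\subset\Omega_X$ one has $\op\big(\overline X^{-1}(S\triangle B)\big)=\nu(S\triangle B)$, which is exactly what lets me transport $\nu$-null sets on $\Omega_X$ to $\op$-null sets on $\overline\Omega$ and back. The only delicate point throughout is to keep track of the completed filtrations $\mathcal F_t^{X,\nu}=\mathcal F_t^X\vee\mathcal N_\nu(\mathcal F_T^X)$ and $\mathcal F_t^{\overline X,\op}=\mathcal F_t^{\overline X}\vee\mathcal N_{\op}(\overline{\mathcal F}_T)$, using repeatedly that a set lies in such a completion iff it differs from a set of the uncompleted $\sigma$-algebra by a null set.

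For the inclusion $\supseteq$, I would fix $\tau\in\mathcal T$ and set $\op:=\nu(dx)\delta_{\tau(x)}(d\theta)$. This is a probability on $\overline\Omega$ whose $\overline X$-marginal is $\nu$, so $\overline X$ is a Markov chain with initial law $m_0^*$ and kernels $(\pi_t)_{t\in I^*}$, giving condition (1). For condition (2), since $\tau$ is an $\mathbb F^{X,\nu}$-stopping time I write $\{\tau\le t\}=B_t\triangle N$ with $B_t\in\mathcal F_t^X$ and $N\in\mathcal N_\nu(\mathcal F_T^X)$. By construction $\op(\overline\tau=\tau(\overline X))=1$, so $\{\overline\tau\le t\}$ differs by an $\op$-null set from $\overline X^{-1}(\{\tau\le t\})=\overline X^{-1}(B_t)\,\triangle\,\overline X^{-1}(N)$; here $\overline X^{-1}(B_t)\in\mathcal F_t^{\overline X}$, and $\overline X^{-1}(N)$ is contained in $\overline X^{-1}(C)\in\overline{\mathcal F}_T$ for the $\nu$-null $C\supset N$, hence is $\op$-null. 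Therefore $\{\overline\tau\le t\}\in\mathcal F_t^{\overline X,\op}$ for every $t$, i.e. $\overline\tau$ is an $\mathbb F^{\overline X,\op}$-stopping time, and $\op\in\mathcal A_0$.

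For the inclusion $\subseteq$, I start from $\op\in\mathcal A_0$. Adaptedness of $\overline\tau$ makes it $\mathcal F_T^{\overline X,\op}$-measurable, hence $\op$-a.s. equal to an $\mathcal F_T^{\overline X}$-measurable variable; since $\overline\tau$ takes values in the finite set $I$ this representation is elementary, and Doob--Dynkin yields an $\mathcal F_T^X$-measurable $\tau:\Omega_X\to I$ with $\overline\tau=\tau(\overline X)$ $\op$-a.s. It remains to check $\tau\in\mathcal T$. Fixing $t$, the relation $\overline\tau=\tau(\overline X)$ gives $\overline X^{-1}(\{\tau\le t\})=\{\overline\tau\le t\}$ up to an $\op$-null set, and the latter lies in $\mathcal F_t^{\overline X,\op}$, so $\overline X^{-1}(\{\tau\le t\})=\overline X^{-1}(B_t)$ up to an $\op$-null set for some $B_t\in\mathcal F_t^X$; applying the identity $\op\big(\overline X^{-1}(\{\tau\le t\}\triangle B_t)\big)=\nu(\{\tau\le t\}\triangle B_t)$ shows $\{\tau\le t\}\triangle B_t$ is $\nu$-null, whence $\{\tau\le t\}\in\mathcal F_t^{X,\nu}$. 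Finally, $\overline\tau=\tau(\overline X)$ a.s. together with the fact that the $\overline X$-marginal of $\op$ equals $\nu$ gives $\int\Phi\,d\op=\int\Phi(x,\tau(x))\,\nu(dx)$ for all bounded measurable $\Phi$, i.e. $\op=\nu(dx)\delta_{\tau(x)}(d\theta)$, completing the inclusion. I expect the main obstacle to be precisely this two-way null-set bookkeeping, ensuring that the completions are handled symmetrically so that the stopping-time property genuinely transfers through the projection $\overline X$ in both directions; once the identity $\op\big(\overline X^{-1}(S\triangle B)\big)=\nu(S\triangle B)$ is in place, the rest is routine.
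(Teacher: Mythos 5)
Your proof is correct and takes essentially the same route as the paper's: both directions hinge on the disintegration $\op(dx,d\theta)=\nu(dx)\delta_{\tau(x)}(d\theta)$ obtained via Doob--Dynkin, and on transporting null sets through the projection $\overline X$ using $\op\circ\overline X^{-1}=\nu$, exactly as in the paper's decomposition $\{\tau(\overline X)\leq t\}=(C\times I)\cup(D\times I)$. Your explicit identity $\op\bigl(\overline X^{-1}(S\triangle B)\bigr)=\nu(S\triangle B)$ simply spells out the completion bookkeeping that the paper compresses into ``it is easy to check.''
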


\begin{proof}
Let $\op\in \mathcal{A}_0$. By the first condition, the first marginal of $\op$ coincides with $\nu$. Since $\overline\tau$ is $\mathcal F_T^{\overline X, \op}$-measurable, there exists some measurable function $\tau:\Omega_X\rightarrow I$ such that $\op(\overline \tau= \tau(\overline X))=1$. In particular we can disintegrate $\op$ as $\op(dx, d\theta)=\nu(dx)\delta_{\tau(x)}(d\theta)$. Using that $\overline \tau$ is an $\mathbb F^{\overline X, \op}$-stopping time and that $\overline \tau= \tau(\overline X)$ $\op$-a.s., we get that $\tau(\overline X)$ is an $\mathbb F^{\overline X, \op}$-stopping time. Moreover, for any $t\in I$ and $B\in \mathcal F_t^{\overline X, \op}$, it is easy to check that $(X, \tau(X))^{-1}(B)\in \mathcal F_t^{X, \nu}$. In particular, taking $B=\{\tau(\overline{X})\leq t\}$, we get $\{\tau(X)\leq t\}\in \mathcal F_t^{X, \nu}$, implying that $\tau\in \mathcal{T}$.

Now let $\tau\in \mathcal{T}$ and define $\op(dx, d\theta):=\nu(dx)\delta_{\tau(x)}(d\theta)$. Since $\op\circ \overline{X}^{-1}=\nu$, the first condition is verified. Define the set $C:=\{\overline{\tau}=\tau(\overline X)\}\in \overline{\mathcal{F}}_T$ which has probability $1$ under $\op$. We have
\begin{align*}
\{\tau(\overline X)\leq t\} = \{x\in \Omega_X:\tau(x)\leq t\}\times I = (C_1\times I)\cup(C_2\times I),
\end{align*}
where $C_1\in \mathcal{F}_t^X$ and $C_2\in \mathcal{N}_\nu(\mathcal{F}_T^X)$ are such that $\{x\in \Omega_X:\tau(x)\leq t\}=C_1\cup C_2$. We can deduce that $C_1\times I\in \mathcal{F}_t^{\overline{X}}$ and $C_2\times I\in \mathcal{N}_{\op}(\overline{\mathcal{F}}_T)$, so that $\{\tau(\overline X)\leq t\}\in \mathcal{F}_t^{\overline{X}, \op}$. Finally,
$$\{\overline{\tau}\leq t\}=(\{\tau(\overline X)\leq t\} \cap C)\cup(\{\overline{\tau}\leq t\}\cap C^c)\in \mathcal{F}_t^{\overline{X}, \op}.$$

\end{proof}

\noindent Let $\mathcal{L}$ be the generator associated to $(\pi_t)_{t\in I^*}$. For any $\varphi\in C(I\times E)$, define the process $M(\varphi)$ by $M_0(\varphi)=\varphi(0, \overline X_0)$ and
$$M_t(\varphi) = \varphi(t, \overline X_t) - \sum_{s=0}^{t-1}\mathcal{L}(\varphi)(s, \overline X_s), \quad t\in I^*.$$
By taking the conditional expectation, one can show the following lemma.

\begin{lemma}[\textit{Equivalence between the Markov property and the martingale problem}]\label{lemma equiv mart}
Let $\overline{\mathbb G}$ be a filtration including $\mathbb F^{\overline{X}}$ and $\op$ be a probability on $(\overline{\Omega}, \mathcal{B}(\overline{\Omega}))$ such that $\op\circ \overline{X}_0^{-1}=m_0^*$. Then, the following are equivalent:
\begin{enumerate}[(1)]
\item Under $\op$, $\overline X$ is a $\overline{\mathbb G}$-Markov chain with transition kernels $(\pi_t)_{t\in I^*}$.
\item Under $\op$, for any $\varphi\in C(I\times E)$, the process $M(\varphi)$ is a $\overline{\mathbb G}$-martingale.
\end{enumerate}
\end{lemma}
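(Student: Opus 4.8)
The plan is to reduce both conditions to a single family of pointwise almost-sure identities and then to observe that they coincide. First I would exploit the telescoping structure of $M(\varphi)$: for $t\in I\setminus\{T\}$ one computes
$$M_{t+1}(\varphi)-M_t(\varphi)=\varphi(t+1,\overline X_{t+1})-\varphi(t,\overline X_t)-\mathcal L(\varphi)(t,\overline X_t)=\varphi(t+1,\overline X_{t+1})-\int_E\varphi(t+1,x')\,\pi_{t+1}(\overline X_t,dx'),$$
where the last equality is just the definition of $\mathcal L$. Since $E$ is compact and $\varphi$ is continuous, both $\varphi$ and $\mathcal L(\varphi)$ are bounded, so $M(\varphi)$ is bounded (hence integrable) and, because $\overline X$ is $\mathbb F^{\overline X}$-adapted and $\mathbb F^{\overline X}\subseteq\overline{\mathbb G}$, it is $\overline{\mathbb G}$-adapted; moreover the integral term above is $\overline{\mathcal G}_t$-measurable as a measurable function of $\overline X_t$. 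Taking the $\overline{\mathcal G}_t$-conditional expectation of the increment, I conclude that $M(\varphi)$ is a $\overline{\mathbb G}$-martingale under $\op$ if and only if, for every $t\in I\setminus\{T\}$,
$$\mathbb E^{\op}\!\left[\varphi(t+1,\overline X_{t+1})\,\middle|\,\overline{\mathcal G}_t\right]=\int_E\varphi(t+1,x')\,\pi_{t+1}(\overline X_t,dx')\quad\op\text{-a.s.}\qquad(\star)$$

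For the implication $(1)\Rightarrow(2)$ this is immediate: if $\overline X$ is a $\overline{\mathbb G}$-Markov chain with transition kernels $(\pi_t)_{t\in I^*}$, then applying the defining conditional-law property to the bounded continuous function $\varphi(t+1,\cdot)\in C(E)$ yields precisely $(\star)$, and by the equivalence established above $M(\varphi)$ is a $\overline{\mathbb G}$-martingale for every $\varphi\in C(I\times E)$. For the converse $(2)\Rightarrow(1)$, I would first specialize $(\star)$ to test functions $\varphi$ that are independent of the time variable: given any $g\in C(E)$, set $\varphi(s,x):=g(x)$, which lies in $C(I\times E)$ because $I$ carries the discrete topology, so that $(\star)$ reads $\mathbb E^{\op}[g(\overline X_{t+1})\mid\overline{\mathcal G}_t]=\int_E g\,d\pi_{t+1}(\overline X_t,\cdot)$ for every continuous $g$. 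Together with the assumed initial law $\op\circ\overline X_0^{-1}=m_0^*$, this is the Markov property tested against continuous functions.

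The remaining, and main, point is to upgrade this identity from continuous to all bounded measurable test functions, which is where the compactness of $E$ is used. I would let $\mathcal H$ be the set of bounded $\mathcal B(E)$-measurable $g$ for which $\mathbb E^{\op}[g(\overline X_{t+1})\mid\overline{\mathcal G}_t]=\int_E g\,d\pi_{t+1}(\overline X_t,\cdot)$ holds $\op$-a.s.\ (for each fixed $t$). Then $\mathcal H$ is a vector space containing the constants, it is stable under bounded monotone limits by dominated convergence applied simultaneously to the conditional expectation on the left and to the kernel integral on the right (discarding the countable union of the exceptional null sets), and it contains the algebra $C(E)$, which separates points and generates $\mathcal B(E)$ since $E$ is a compact metric space. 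The functional monotone class (multiplicative system) theorem then gives $\mathcal H=M_b(E)$, which is exactly the Markov property with transition kernels $(\pi_t)_{t\in I^*}$, completing the proof. The only delicate step is this last extension; the rest is the bookkeeping of the telescoping identity and boundedness.
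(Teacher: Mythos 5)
Your proof is correct and follows exactly the route the paper intends: the authors give no written proof, remarking only that the lemma follows ``by taking the conditional expectation,'' which is precisely your telescoping computation reducing the martingale property to the identity $(\star)$ and then reading off both implications. The only detail you add beyond the paper's sketch---upgrading $(\star)$ from $C(E)$ to all bounded measurable test functions via the multiplicative-system theorem (valid here since $E$ is compact metric, so $C(E)$ is a multiplicative class generating $\mathcal{B}(E)$)---is a standard extension that the authors implicitly elide, and your handling of it, including the countable union of exceptional null sets, is sound.
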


\begin{definition}
Let $\mathcal{A}_1$ be the set of probabilities $\op$ on $(\overline \Omega, \mathcal{B}(\overline \Omega))$ such that under $\op$, $\overline X$ is an $\overline{\mathbb F}$-Markov chain with initial distribution $m_0^*$ and transition kernels $(\pi_t)_{t\in I^*}$. By Lemma \ref{lemma equiv mart}, $\mathcal{A}_1$ is exactly the set of probability measures $\op$ on $(\overline \Omega, \mathcal{B}(\overline \Omega))$ such that $\op\circ \overline X_0^{-1}=m_0^*$ and under $\op$, for any $\varphi\in C(I\times E)$, the process $M(\varphi)$ is an $\overline{\mathbb F}$-martingale.
\end{definition}

\begin{lemma}\label{inclusion strict}
We have the inclusion $\mathcal{A}_0\subset \mathcal{A}_1$.
\end{lemma}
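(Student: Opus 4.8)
The plan is to fix $\op\in\mathcal{A}_0$ and prove directly that $\overline X$ is an $\overline{\mathbb F}$-Markov chain, which is precisely the defining property of $\mathcal{A}_1$. Since the first defining property of $\mathcal{A}_0$ gives $\op\circ\overline X_0^{-1}=m_0^*$ and $\overline{\mathbb F}\supset\mathbb{F}^{\overline X}$, Lemma \ref{lemma equiv mart} applied with $\overline{\mathbb G}=\overline{\mathbb F}$ reduces the task to showing that for every $\varphi\in C(I\times E)$ the process $M(\varphi)$ is an $\overline{\mathbb F}$-martingale, i.e.
$$\mathbb{E}_{\op}[M_{t+1}(\varphi)\mid\overline{\mathcal{F}}_t]=M_t(\varphi),\qquad t\in I\setminus\{T\}.$$

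First I would establish the key filtration inclusion $\overline{\mathcal{F}}_t\subset\mathcal{F}^{\overline X,\op}_t$. Because $\overline\tau$ is an $\mathbb{F}^{\overline X,\op}$-stopping time, for every $s\le t$ we have $\{\overline\tau=s\}=\{\overline\tau\le s\}\setminus\{\overline\tau\le s-1\}\in\mathcal{F}^{\overline X,\op}_s\subset\mathcal{F}^{\overline X,\op}_t$; since these events generate $\mathcal{F}^{\overline\tau}_t$ we obtain $\mathcal{F}^{\overline\tau}_t\subset\mathcal{F}^{\overline X,\op}_t$, and combined with $\mathcal{F}^{\overline X}_t\subset\mathcal{F}^{\overline X,\op}_t$ this yields $\overline{\mathcal{F}}_t=\mathcal{F}^{\overline X}_t\vee\mathcal{F}^{\overline\tau}_t\subset\mathcal{F}^{\overline X,\op}_t$.

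Next, from the first defining property of $\mathcal{A}_0$ (that $\overline X$ is a Markov chain in its own filtration), Lemma \ref{lemma equiv mart} with $\overline{\mathbb G}=\mathbb{F}^{\overline X}$ gives that $M(\varphi)$ is an $\mathbb{F}^{\overline X}$-martingale. As $\mathcal{F}^{\overline X,\op}_t=\mathcal{F}^{\overline X}_t\vee\mathcal{N}_{\op}(\overline{\mathcal{F}}_T)$ differs from $\mathcal{F}^{\overline X}_t$ only by $\op$-null sets, conditional expectations are a.s.\ unchanged and $M(\varphi)$ remains an $\mathbb{F}^{\overline X,\op}$-martingale. Then, using the inclusion $\overline{\mathcal{F}}_t\subset\mathcal{F}^{\overline X,\op}_t$ and the tower property,
$$\mathbb{E}_{\op}[M_{t+1}(\varphi)\mid\overline{\mathcal{F}}_t]=\mathbb{E}_{\op}\big[\mathbb{E}_{\op}[M_{t+1}(\varphi)\mid\mathcal{F}^{\overline X,\op}_t]\mid\overline{\mathcal{F}}_t\big]=\mathbb{E}_{\op}[M_t(\varphi)\mid\overline{\mathcal{F}}_t]=M_t(\varphi),$$
the last equality holding because $M_t(\varphi)$ is $\mathcal{F}^{\overline X}_t$-measurable, hence $\overline{\mathcal{F}}_t$-measurable. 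This is the desired martingale property, so $\op\in\mathcal{A}_1$.

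The argument is essentially bookkeeping with the three filtrations $\mathbb{F}^{\overline X}\subset\overline{\mathbb F}\subset\mathbb{F}^{\overline X,\op}$, so I expect no deep obstacle. The one point that must be handled carefully, and which makes everything work, is the observation that $\overline\tau$, being a stopping time for the null-set augmentation of the $\overline X$-filtration, adds no information to $\overline{\mathbb F}$ beyond what is ($\op$-a.s.) already carried by $\mathbb{F}^{\overline X}$; this sandwiches $\overline{\mathbb F}$ between $\mathbb{F}^{\overline X}$ and its augmentation and lets the martingale property pass from the smallest filtration to the middle one via the tower property. I would also double-check that the null sets defining $\mathcal{F}^{\overline X,\op}_t$ are drawn from the full $\sigma$-algebra $\overline{\mathcal{F}}_T$ carrying $\overline\tau$, since this is exactly what guarantees that the events $\{\overline\tau\le s\}$ can be captured in $\mathcal{F}^{\overline X,\op}_s$.
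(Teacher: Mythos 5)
Your proof is correct and rests on the same key observation as the paper's: since $\overline\tau$ is an $\mathbb{F}^{\overline X,\op}$-stopping time, the filtration $\overline{\mathbb F}$ sits between $\mathbb{F}^{\overline X}$ and its $\op$-augmentation $\mathbb{F}^{\overline X,\op}$, so conditional expectations given $\overline{\mathcal F}_t$ reduce to those given the augmented $\overline X$-filtration. The paper verifies the Markov property directly via the chain $\mathbb E^{\op}[\varphi(\overline X_t)\mid\overline{\mathcal F}_{t-1}]=\mathbb E^{\op}[\varphi(\overline X_t)\mid\mathcal F_{t-1}^{\overline X,\op}]=\mathbb E^{\op}[\varphi(\overline X_t)\mid\overline X_{t-1}]$, whereas you route through the equivalent martingale-problem formulation of Lemma \ref{lemma equiv mart} and the tower property, which is a purely presentational difference.
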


\begin{proof}
Let $\op\in \mathcal{A}_0$. Since $\overline\tau$ is an $\mathbb F^{\overline X, \op}$-stopping time, we have that $\overline{\mathbb F}^{\op}=\mathbb F^{\overline X, \op}$. For any $\varphi\in C(E)$, we obtain
$$\mathbb E^{\op}[\varphi(\overline X_t)|\overline{\mathcal F}_{t-1}]=\mathbb E^{\op}[\varphi(\overline X_t)|\overline{\mathcal F}_{t-1}^{\op}]=\mathbb E^{\op}[\varphi(\overline X_t)|\mathcal F_{t-1}^{\overline X, \op}]=\mathbb E^{\op}[\varphi(\overline X_t)|\overline X_{t-1}].$$

\end{proof}

\begin{lemma}[\textit{Immersion property}]\label{Immersion property}
If $\op \in \mathcal{A}_1$, then $\mathbb F^{\overline X}$ is immersed in $\overline{\mathbb F}$ under $\op$.
\end{lemma}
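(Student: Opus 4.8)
The plan is to reduce immersion to the equivalent statement in terms of conditional expectations: it suffices to show that for every $t\in I$ and every bounded $\mathcal{F}^{\overline X}_T$-measurable random variable $\eta$ one has $\mathbb E^{\op}[\eta\mid \overline{\mathcal F}_t]=\mathbb E^{\op}[\eta\mid \mathcal{F}^{\overline X}_t]$. Once this is established, the conditional independence of $\overline{\mathcal F}_t$ and $\mathcal{F}^{\overline X}_T$ given $\mathcal{F}^{\overline X}_t$ demanded by the definition follows routinely: for bounded $\xi$ which is $\overline{\mathcal F}_t$-measurable and bounded $\eta$ which is $\mathcal{F}^{\overline X}_T$-measurable, using $\mathcal{F}^{\overline X}_t\subset\overline{\mathcal F}_t$ to pull $\xi$ out of the inner conditional expectation gives $\mathbb E^{\op}[\xi\eta\mid \mathcal{F}^{\overline X}_t]=\mathbb E^{\op}[\xi\,\mathbb E^{\op}[\eta\mid\overline{\mathcal F}_t]\mid \mathcal{F}^{\overline X}_t]=\mathbb E^{\op}[\xi\mid \mathcal{F}^{\overline X}_t]\,\mathbb E^{\op}[\eta\mid \mathcal{F}^{\overline X}_t]$.

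First I would invoke a monotone class argument to reduce the displayed identity to the case $\eta=\prod_{s=0}^T\varphi_s(\overline X_s)$ with each $\varphi_s\in C(E)$. Factoring $\eta=\eta_{\mathrm{past}}\,\eta_{\mathrm{fut}}$ with $\eta_{\mathrm{past}}=\prod_{s=0}^t\varphi_s(\overline X_s)$ and $\eta_{\mathrm{fut}}=\prod_{s=t+1}^T\varphi_s(\overline X_s)$, the factor $\eta_{\mathrm{past}}$ is measurable with respect to both $\mathcal{F}^{\overline X}_t$ and $\overline{\mathcal F}_t$, so it pulls out of both conditional expectations and the claim reduces to $\mathbb E^{\op}[\eta_{\mathrm{fut}}\mid \overline{\mathcal F}_t]=\mathbb E^{\op}[\eta_{\mathrm{fut}}\mid \mathcal{F}^{\overline X}_t]$.

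The core of the proof is then a backward induction exploiting the $\overline{\mathbb F}$-Markov property, which holds by the very definition of $\mathcal{A}_1$. Setting $\Phi_T:=\varphi_T$ and, for $t<s<T$, $\Phi_s(x):=\varphi_s(x)\int_E\Phi_{s+1}(x')\,\pi_{s+1}(x;dx')$, the identity $\mathbb E^{\op}[\psi(\overline X_{s+1})\mid\overline{\mathcal F}_s]=\int_E\psi(x')\,\pi_{s+1}(\overline X_s;dx')$ applied from time $T$ down to time $t$ yields $\mathbb E^{\op}[\eta_{\mathrm{fut}}\mid \overline{\mathcal F}_t]=\int_E\Phi_{t+1}(x')\,\pi_{t+1}(\overline X_t;dx')=:\Psi(\overline X_t)$, which is $\sigma(\overline X_t)$-measurable and hence $\mathcal{F}^{\overline X}_t$-measurable. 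Conditioning this equality further on $\mathcal{F}^{\overline X}_t\subset\overline{\mathcal F}_t$ and using the tower property shows that $\mathbb E^{\op}[\eta_{\mathrm{fut}}\mid \mathcal{F}^{\overline X}_t]=\Psi(\overline X_t)$ as well, so the two conditional expectations coincide, completing the reduction.

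The main obstacle, and precisely the point where the hypothesis $\op\in\mathcal{A}_1$ is indispensable, is that every Markov transition in the backward recursion must be computed with respect to the enlarged filtration $\overline{\mathbb F}$ rather than the raw filtration $\mathbb F^{\overline X}$. It is this stronger $\overline{\mathbb F}$-Markov property that guarantees the extra information carried by $\mathcal{F}^{\overline\tau}_t$ does not affect the law of the future of $\overline X$ — which is exactly what immersion asserts. If $\overline X$ were merely a Markov chain for its own filtration, the conditioning on $\overline{\mathcal F}_t$ would not be guaranteed to collapse to a function of $\overline X_t$, and immersion could fail.
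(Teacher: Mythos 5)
Your proof is correct and takes essentially the same route as the paper's: both arguments rest on the single key fact that, by the $\overline{\mathbb F}$-Markov property defining $\mathcal{A}_1$, the conditional expectation of a functional of the future of $\overline X$ given $\overline{\mathcal F}_t$ collapses to a function of $\overline X_t$, hence is $\mathcal{F}_t^{\overline X}$-measurable. The paper verifies the conditional independence directly via a four-function computation and leaves this key step implicit (its second equality, $\mathbb E^{\op}[\psi_{t+}(\overline X)\mid\overline{\mathcal F}_t]=\mathbb E^{\op}[\psi_{t+}(\overline X)\mid\mathcal{F}_t^{\overline X}]$), whereas you prove it explicitly by backward induction through the kernels and then pass through the equivalent hypothesis-(H) formulation --- a more detailed write-up of the same argument.
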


\begin{proof}
We need to show that for all $t\in I$, under $\op$, $\overline{\mathcal{F}}_t$ is conditionally independent of $\mathcal{F}^{\overline X}_T$ given $\mathcal{F}^{\overline X}_t$. Let $\varphi_t:\overline \Omega\rightarrow \mathbb R$ be bounded and $\overline{\mathcal{F}}_t$-measurable, $\psi_t:E^I\rightarrow \mathbb R$ be bounded and $\mathcal{F}_t^{\overline X}$-measurable, $\psi_{t+}:E^I\rightarrow \mathbb R$ be bounded and $\sigma(\overline X_{t+1}, \ldots, \overline X_T)$-measurable and $\phi_{t}:E^I\rightarrow \mathbb R$ be bounded and $\mathcal{F}_t^{\overline X}$-measurable. We have
\begin{align*}
\mathbb E^{\op}[\varphi_t(\overline X, \overline \tau)\psi_t(\overline X)\psi_{t+}(\overline X)\phi_{t}(\overline X)]& = \mathbb E^{\op}[\varphi_t(\overline X, \overline \tau)\psi_t(\overline X)\mathbb E^{\op}[\psi_{t+}(\overline X)|\overline{\mathcal{F}}_t]\phi_{t}(\overline X)]\\
&= \mathbb E^{\op}[\varphi_t(\overline X, \overline \tau)\psi_t(\overline X)\mathbb E^{\op}[\psi_{t+}(\overline X)|\mathcal{F}_t^{\overline X}]\phi_{t}(\overline X)]\\
&= \mathbb E^{\op}[\mathbb E^{\op}[\varphi_t(\overline X, \overline \tau)|\mathcal{F}_t^{\overline X}]\mathbb E^{\op}[\psi_t(\overline X)\psi_{t+}(\overline X)|\mathcal{F}_t^{\overline X}]\phi_{t}(\overline X)].
\end{align*}
This shows that 
$$\mathbb E^{\op}[\varphi_t(\overline X, \overline \tau)\psi_t(\overline X)\psi_{t+}(\overline X)|\mathcal{F}_t^{\overline X}]=\mathbb E^{\op}[\varphi_t(\overline X, \overline \tau)|\mathcal{F}_t^{\overline X}]\mathbb E^{\op}[\psi_t(\overline X)\psi_{t+}(\overline X)|\mathcal{F}_t^{\overline X}].$$
This is sufficient to prove the claim.

\end{proof}

Applying a discrete time version of Proposition 1.10 in Chapter 1 in \cite{carmona2018b}, we get the following corollary.

\begin{corollary}\label{coro disint}
We have the equality $\mathcal{A}_1=\{\nu(dx)\kappa(x, d\theta):\kappa \in \mathcal{K}\}$, where $\mathcal{K}$ is the set of transition kernels from $\Omega_X$ to $I$ such that for all $t\in I$ and $B\in \sigma(\{0\}, \ldots, \{t\})$, the mapping $x\mapsto \kappa(x, B)$ is $\mathcal{F}_t^{X, \nu}$-measurable.
\end{corollary}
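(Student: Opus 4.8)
The plan is to establish the two inclusions via a disintegration of $\op$ over its first marginal, combined with the immersion characterization already available in the excerpt. The key structural fact is that, since $I$ is finite, every probability $\op$ on $\overline\Omega=\Omega_X\times I$ whose first marginal is $\nu$ disintegrates uniquely, up to $\nu$-null sets, as $\op(dx,d\theta)=\nu(dx)\kappa(x,d\theta)$, where $\kappa(x,\{j\})$ is a version of $\mathbb E^{\op}[\mathbf 1_{\overline\tau=j}\mid \mathcal F_T^{\overline X}]$ read off along the path $x$. Thus $\op\mapsto\kappa$ is a bijection between probabilities with first marginal $\nu$ and transition kernels from $\Omega_X$ to $I$ (taken modulo $\nu$-null sets), and under this bijection it remains only to show that $\op$ makes $\overline X$ an $\overline{\mathbb F}$-Markov chain if and only if $\kappa\in\mathcal K$.

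First I would record that, for any $\op$ with first marginal $\nu$, the process $\overline X$ is automatically an $\mathbb F^{\overline X}$-Markov chain with the prescribed kernels, so by Lemma \ref{lemma equiv mart} the process $M(\varphi)$ is an $\mathbb F^{\overline X}$-martingale for every $\varphi\in C(I\times E)$. Membership in $\mathcal A_1$ further requires $M(\varphi)$ to be an $\overline{\mathbb F}$-martingale, and I claim this is equivalent to the immersion of $\mathbb F^{\overline X}$ in $\overline{\mathbb F}$: one direction is exactly Lemma \ref{Immersion property}, while for the other direction immersion forces $\mathbb E^{\op}[Y\mid\overline{\mathcal F}_t]=\mathbb E^{\op}[Y\mid\mathcal F_t^{\overline X}]$ for every $\mathcal F_T^{\overline X}$-measurable $Y$, so each $\mathbb F^{\overline X}$-martingale $M(\varphi)$ remains an $\overline{\mathbb F}$-martingale and Lemma \ref{lemma equiv mart} applies in reverse. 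Hence $\op\in\mathcal A_1$ if and only if immersion holds, and the problem reduces to matching immersion with the measurability condition defining $\mathcal K$.

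For the forward inclusion $\mathcal A_1\subset\{\nu(dx)\kappa(x,d\theta):\kappa\in\mathcal K\}$, I take $\op\in\mathcal A_1$; its first marginal is $\nu$, so the disintegration above produces $\kappa$. For $B\in\sigma(\{0\},\dots,\{t\})$ the event $\{\overline\tau\in B\}$ is $\mathcal F_t^{\overline\tau}$-measurable, hence $\overline{\mathcal F}_t$-measurable, so the immersion property of Lemma \ref{Immersion property} gives $\mathbb E^{\op}[\mathbf 1_{\overline\tau\in B}\mid\mathcal F_T^{\overline X}]=\mathbb E^{\op}[\mathbf 1_{\overline\tau\in B}\mid\mathcal F_t^{\overline X}]$. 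The left-hand side is $\kappa(\overline X,B)$ by construction, so $\kappa(\cdot,B)$ admits an $\mathcal F_t^{\overline X}$-measurable version, i.e. it is $\mathcal F_t^{X,\nu}$-measurable, whence $\kappa\in\mathcal K$. Conversely, for $\kappa\in\mathcal K$ and $\op:=\nu(dx)\kappa(x,d\theta)$, I would verify immersion by the factorization computation of Lemma \ref{Immersion property}: for $\psi_t,\phi_t$ that are $\mathcal F_t^{\overline X}$-measurable, $\psi_{t+}$ that is $\sigma(\overline X_{t+1},\dots,\overline X_T)$-measurable, and $B\in\sigma(\{0\},\dots,\{t\})$, one rewrites $\mathbb E^{\op}[\mathbf 1_{\overline\tau\in B}\psi_t\psi_{t+}\phi_t]$ as $\mathbb E^{\nu}[\kappa(\cdot,B)\psi_t\psi_{t+}\phi_t]$ and uses that $\kappa(\cdot,B)\psi_t\phi_t$ is $\mathcal F_t^{X,\nu}$-measurable, together with the Markov property of $\nu$, to pull $\psi_{t+}$ through a conditional expectation, yielding exactly the conditional independence of $\overline{\mathcal F}_t$ and $\mathcal F_T^{\overline X}$ given $\mathcal F_t^{\overline X}$.

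The main obstacle I anticipate is measure-theoretic bookkeeping rather than anything conceptual: guaranteeing that the disintegration kernel $\kappa$ is genuinely a transition kernel and that the measurable versions extracted from the conditional expectations can be chosen $\mathcal F_t^{X,\nu}$-measurable simultaneously for all $t$ and all $B$, so that the identification $\op\leftrightarrow\kappa$ is well posed modulo $\nu$-null sets. Since $I$ is finite, this amounts to finitely many conditional probabilities per time step, and the completion $\mathcal F_t^{X,\nu}$ absorbs the $\nu$-null ambiguity; this is precisely the content packaged by the discrete-time version of Proposition~1.10 in Chapter~1 of \cite{carmona2018b} that we invoke.
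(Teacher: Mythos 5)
Your proof is correct, and it takes a genuinely more self-contained route than the paper, whose entire proof of Corollary \ref{coro disint} is a one-line appeal to a discrete-time version of Proposition 1.10 in Chapter 1 of \cite{carmona2018b}: you in effect prove that proposition in this setting. Your three steps are all sound: the disintegration $\op(dx,d\theta)=\nu(dx)\kappa(x,d\theta)$ is unproblematic because $I$ is finite, so $\kappa(x,\{j\})$ is just a version of $\op(\overline\tau=j\mid\mathcal F_T^{\overline X})$; the equivalence $\op\in\mathcal A_1\Leftrightarrow$ immersion is correctly assembled from Lemma \ref{lemma equiv mart}, Lemma \ref{Immersion property}, and your tower-property argument for the converse (using that $M(\varphi)$ is automatically an $\mathbb F^{\overline X}$-martingale once the first marginal is $\nu$); and matching immersion with $\kappa\in\mathcal K$ works because the completion $\mathcal F_t^{X,\nu}$ absorbs the null-set ambiguity in the chosen versions, with finiteness of $I$ reducing the measurability requirement to the atoms $B=\{j\}$, $j\leq t$, the block $\{t+1,\ldots,T\}$ following by complementation. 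One economy you missed: your reverse inclusion ($\kappa\in\mathcal K\Rightarrow\nu\kappa\in\mathcal A_1$) redoes a computation the paper already contains, since the proof of Proposition \ref{Dynkin rand} shows that under $\mathbb P(d\omega)\kappa(\omega,ds)$ the process $\overline X$ is a Markov chain for the enlarged filtration $\mathcal F_t\otimes\mathcal F^\theta_t\supseteq\overline{\mathcal F}_t$, and the tower property then yields the $\overline{\mathbb F}$-martingale property of $M(\varphi)$. As for what each approach buys: the paper's citation is economical but opaque, since the cited result is stated in a different (continuous-time) setting and the reader must trust the transposition; your argument makes transparent that in this discrete, finite-$I$ framework the corollary follows from the two lemmas already proved in the paper plus elementary disintegration, with no genuine external input.
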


In particular, the set $\mathcal{A}_1$ can be interpreted as the set of randomized stopping times since each kernel $\kappa\in \mathcal{K}$ gives a probability to stop given the observation of the underlying process (in an adapted way through the filtration of the process). The set $\mathcal{A}_0$ corresponds to the set of Markovian stopping times in the sense that up to null sets, the stopping rule is determined by the information given by the underlying Markov process and the kernel is given by a Dirac measure (cf Lemma \ref{disint Markov}).

\begin{proposition}
The set $\mathcal{A}_1$ is compact and convex.
\end{proposition}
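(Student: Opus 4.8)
The plan is to realize $\mathcal A_1$ as a subset of the compact convex space $\mathcal P(\overline\Omega)$ cut out by affine, weakly continuous constraints, so that convexity and compactness follow simultaneously. Since $E$ is a compact metric space and $I$ is finite, $\overline\Omega=E^I\times I$ is a compact metric space; hence $\mathcal P(\overline\Omega)$, endowed with the topology of weak convergence, is compact and metrizable (Prokhorov) and is of course convex. It therefore suffices to show that $\mathcal A_1$ is a \emph{closed} and \emph{convex} subset of $\mathcal P(\overline\Omega)$.

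First I would rewrite the defining conditions of $\mathcal A_1$ as a family of scalar constraints. By the martingale reformulation recorded in the definition (via Lemma \ref{lemma equiv mart}), $\op\in\mathcal A_1$ iff $\op\circ\overline X_0^{-1}=m_0^*$ and, for every $\varphi\in C(I\times E)$, the process $M(\varphi)$ is an $\overline{\mathbb F}$-martingale. The latter is equivalent to: for every $t\in I^*$ and every bounded $\overline{\mathcal F}_{t-1}$-measurable $h$, $\int_{\overline\Omega}\bigl(M_t(\varphi)-M_{t-1}(\varphi)\bigr)h\,d\op=0$. The key reduction is to restrict $h$ to \emph{continuous} bounded $\overline{\mathcal F}_{t-1}$-measurable functions. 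This is legitimate because $\overline{\mathcal F}_{t-1}$ is generated by the continuous map $G_{t-1}:\overline\Omega\to E^{t}\times\{0,\dots,t\}$, $G_{t-1}=(\overline X_0,\dots,\overline X_{t-1},\overline\tau\wedge t)$, whose target is compact metric; continuous functions are dense in $L^1$ of the pushforward of $\op$, so continuous $\overline{\mathcal F}_{t-1}$-measurable test functions are dense in $L^1(\overline{\mathcal F}_{t-1},\op)$, and, as $M_t(\varphi)-M_{t-1}(\varphi)$ is bounded, testing against this dense class recovers the full conditional-expectation identity. Likewise the initial-law constraint is written as $\int\psi(\overline X_0)\,d\op=\int\psi\,dm_0^*$ for all $\psi\in C(E)$.

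The point of this reformulation is that each constraint functional is affine and weakly continuous in $\op$. Indeed $M_t(\varphi)$ is a continuous bounded function of $\overline\omega$: it depends only on $\overline X_0,\dots,\overline X_t$, and each term $\mathcal L(\varphi)(s,\cdot)=\int_E[\varphi(s+1,x')-\varphi(s,\cdot)]\pi_{s+1}(\cdot,dx')$ is continuous by the assumed continuity of the kernels $x\mapsto\pi_{s+1}(x,\cdot)$ into $\mathcal P(E)$. Hence, for fixed $\varphi$, $t$ and continuous bounded $h$, the integrand $\bigl(M_t(\varphi)-M_{t-1}(\varphi)\bigr)h$ is continuous and bounded, so $\op\mapsto\int\bigl(M_t(\varphi)-M_{t-1}(\varphi)\bigr)h\,d\op$ is linear and continuous for weak convergence; the same holds for $\op\mapsto\int\psi(\overline X_0)\,d\op$. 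Consequently $\mathcal A_1$ is an intersection of affine hyperplanes of the form $\{\op:\ell(\op)=c\}$, with $\ell$ continuous and linear, intersected with the compact convex set $\mathcal P(\overline\Omega)$. Being an intersection of affine (hence convex) sets, $\mathcal A_1$ is convex; being an intersection of preimages of closed singletons under continuous maps, $\mathcal A_1$ is closed, and a closed subset of the compact $\mathcal P(\overline\Omega)$ is compact.

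The main obstacle is the reduction to continuous test functions $h$ together with the verification that the relevant integrands are genuinely continuous on $\overline\Omega$: this is exactly where the continuity assumption on the transition kernels and the description of $\overline{\mathcal F}_{t-1}$ as generated by a continuous map into a compact factor are used. Once continuity of these functionals is established, convexity and closedness are immediate, and compactness follows from Prokhorov's theorem.
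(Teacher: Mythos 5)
Your proof is correct and follows essentially the same route as the paper: both realize $\mathcal{A}_1$ inside the compact $\mathcal{P}(\overline\Omega)$ and characterize membership through affine, weakly continuous functionals obtained by testing $M_t(\varphi)-M_{t-1}(\varphi)$ against continuous $\overline{\mathcal{F}}_{t-1}$-measurable functions (the paper argues sequential closedness plus linearity, you phrase it as an intersection of closed affine hyperplanes, which is the same argument). The only substantive difference is that you make explicit the density step justifying the restriction to continuous test functions --- writing $\overline{\mathcal{F}}_{t-1}=\sigma(G_{t-1})$ with $G_{t-1}=(\overline X_0,\dots,\overline X_{t-1},\overline\tau\wedge t)$ continuous into a compact metric factor, then using $L^1$-density together with boundedness of $M_t(\varphi)-M_{t-1}(\varphi)$ --- a step the paper's proof leaves implicit.
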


\begin{proof}
The relative compactness of $\mathcal{A}_1$ follows since $\mathcal{P}(\overline\Omega)$ is compact. Let us show that $\mathcal{A}_1$ is closed. Let $(\op_n)_{n\geq1}\subset \mathcal{A}_1$ converging weakly to some $\op$. Using the continuity of the map $(x, \theta)\mapsto x_0$ and passing to the limit, we get $\op\circ \overline X_0^{-1}=m_0^*$. Let $\varphi\in C(E)$ and $\psi\in C(\overline \Omega)$ an $\overline{\mathcal{F}}_{t-1}$-measurable function, then
$$\mathbb E^{\op}[(M_t(\varphi)-M_{t-1}(\varphi))\psi(\overline X, \overline \tau)]=\lim_{n\rightarrow\infty}\mathbb E^{\op_n}[(M_t(\varphi)-M_{t-1}(\varphi))\psi(\overline X, \overline \tau)]=0.$$
This shows that $\mathcal{A}_1$ is closed and henceforth compact. The convexity follows by the linearity of the conditions with respect to the probability measure.

\end{proof}

\noindent For any $\op\in \mathcal{A}_1$, we define the occupation measures
$$m_t^{\op}(B):=\op(\overline X_t\in B, t<\overline \tau), \quad B\in \mathcal{B}(E),\quad t\in I\setminus\{T\},$$
$$\mu_t^{\op}(B):=\op(\overline X_t\in B, \overline \tau=t), \quad B\in \mathcal{B}(E), \quad t\in I.$$

\begin{definition}
Define the sets
$$\mathcal{R}_0:=\{(\mu^{\op}, m^{\op}): \op\in \mathcal{A}_0\},\qquad \mathcal{R}_1:=\{(\mu^{\op}, m^{\op}): \op\in \mathcal{A}_1\}.$$
\end{definition}

\begin{proposition}\label{R_1 compact}
The set $\mathcal{R}_1$ is compact and convex.
\end{proposition}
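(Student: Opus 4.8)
The plan is to realize $\mathcal{R}_1$ as the image of the set $\mathcal{A}_1$ under the occupation-measure map $\Phi:\op\mapsto(\mu^{\op}, m^{\op})$, and then deduce both properties from structural features of $\Phi$ together with the preceding proposition, which provides that $\mathcal{A}_1$ is compact and convex. The target space is $\prod_{t\in I}\mathcal{P}^{sub}(E)\times\prod_{t\in I\setminus\{T\}}\mathcal{P}^{sub}(E)$, endowed with the product of the weak-convergence topologies.

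First I would observe that $\Phi$ is affine in $\op$. Indeed, for each $t$ and each $B\in\mathcal{B}(E)$ the quantities $m_t^{\op}(B)=\op(\overline X_t\in B, t<\overline\tau)$ and $\mu_t^{\op}(B)=\op(\overline X_t\in B, \overline\tau=t)$ depend linearly on $\op$. Since $\mathcal{A}_1$ is convex, for $\op_1, \op_2\in\mathcal{A}_1$ and $\alpha\in[0,1]$ we have $\alpha\op_1+(1-\alpha)\op_2\in\mathcal{A}_1$ and $\Phi(\alpha\op_1+(1-\alpha)\op_2)=\alpha\Phi(\op_1)+(1-\alpha)\Phi(\op_2)$, so $\mathcal{R}_1=\Phi(\mathcal{A}_1)$ is convex.

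For compactness I would show that $\Phi$ is continuous for the weak topology on $\mathcal{A}_1$; then $\mathcal{R}_1=\Phi(\mathcal{A}_1)$ is the continuous image of a compact set, hence compact. It suffices to check that for each $\varphi\in C(E)$ and each $t$, the maps $\op\mapsto\int_E\varphi\,dm_t^{\op}=\mathbb E^{\op}[\varphi(\overline X_t)\mathds{1}_{t<\overline\tau}]$ and $\op\mapsto\int_E\varphi\,d\mu_t^{\op}=\mathbb E^{\op}[\varphi(\overline X_t)\mathds{1}_{\overline\tau=t}]$ are continuous. The integrands are bounded and continuous on $\overline\Omega=E^I\times I$: the projection $(x,\theta)\mapsto x_t$ is continuous on $E^I$, $\varphi$ is continuous on $E$, and since $I$ carries the discrete topology the sets $\{\theta=t\}$ and $\{t<\theta\}=\{\theta\in\{t+1,\ldots,T\}\}$ are clopen, so $\mathds{1}_{\overline\tau=t}$ and $\mathds{1}_{t<\overline\tau}$ are continuous. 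Hence weak convergence $\op_n\to\op$ yields convergence of these integrals, giving continuity of $\Phi$.

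The only delicate point is the continuity of $\Phi$, and specifically the continuity of the stopping indicators. In a continuous-time formulation these indicators would fail to be continuous (occupation measures are not continuous in the stopping time), which is precisely the difficulty noted earlier for common-noise problems of optimal stopping; here the discrete-time, finite-horizon structure is essential, since the discreteness of the time index makes $\{\overline\tau=t\}$ and $\{t<\overline\tau\}$ clopen and lets the argument proceed with no further regularity assumption.
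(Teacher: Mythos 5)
Your proof is correct and follows essentially the same route as the paper: both establish convexity via the linearity of the occupation-measure map $\op\mapsto(\mu^{\op},m^{\op})$ on the convex set $\mathcal{A}_1$, and compactness by showing this map is weakly continuous (using that $(x,\theta)\mapsto\varphi(x_t)\mathds{1}_{t<\theta}$ and $(x,\theta)\mapsto\varphi(x_t)\mathds{1}_{\theta=t}$ are bounded and continuous on $\overline\Omega$, precisely because $I$ is discrete) so that $\mathcal{R}_1$ is the continuous image of the compact set $\mathcal{A}_1$. Your closing remark correctly identifies the clopenness of $\{\theta=t\}$ and $\{t<\theta\}$ as the point where the discrete-time structure is used, which the paper leaves implicit.
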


\begin{proof}
For the compactness it suffices to show that $\mathcal{A}_1\ni \op\mapsto (\mu^{\op}, m^{\op})$ is continuous, since $\mathcal{A}_1$ is compact. Let $(\op_n)_{n\geq1}\subset \mathcal{A}_1$ converging weakly to $\op\in \mathcal{A}_1$. For any $\varphi\in C(E)$ and $t\in I\setminus\{T\}$, 
$$\int_E\varphi(x)m_t^{\op_n}(dx)=\mathbb E^{\op_n}[\varphi(\overline X_t)\mathds{1}_{t<\overline\tau}]\underset{n\rightarrow\infty}{\longrightarrow}\mathbb E^{\op}[\varphi(\overline X_t)\mathds{1}_{t<\overline\tau}]=\int_E\varphi(x)m_t^{\op}(dx),$$
where we used the continuity of the function $\overline\Omega\ni (x, \theta)\mapsto\varphi(x_t)\mathds{1}_{t<\theta}$. This shows that for each $t\in I\setminus\{T\}$, $(m_t^{\op_n})_{n\geq 1}$ converges to $m_t^{\op}$. By the same argument, for each $t\in I$, $(\mu_t^{\op_n})_{n\geq 1}$ converges to $\mu_t^{\op}$. We conclude that $\mathcal{R}_1$ is compact. By the linearity of the same mapping $\mathcal{A}_1\ni \op\mapsto (\mu^{\op}, m^{\op})$ and the convexity of $\mathcal{A}_1$, we get that $\mathcal{R}_1$ is convex.

\end{proof}

\noindent We define the relaxed set in this framework as follows.

\begin{definition}
Let $\mathcal{R}$ be the set of pairs $(\mu, m)\in \mathcal{P}^{sub}(E)^I\times \mathcal{P}^{sub}(E)^{I\setminus\{T\}}$ such that for all $\varphi\in C(I\times E)$,
\begin{align*}
\sum_{t=0}^T\int_{E}\varphi(t, x) \mu_t(dx) =\int_{E} \varphi(0, x)m_0^*(dx)
+ \sum_{t=0}^{T-1}\int_{E}\mathcal{L}(\varphi)(t, x) m_t(dx).
\end{align*}
\end{definition}

\begin{proposition}\label{inclusions R}
We have the inclusions $\mathcal{R}_0\subset \mathcal{R}_1\subset \mathcal{R}$. In particular $\overline{\text{conv}}(\mathcal{R}_0)\subset \mathcal{R}_1\subset \mathcal{R}$.
\end{proposition}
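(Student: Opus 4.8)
The plan is to establish the two inclusions $\mathcal{R}_0\subset \mathcal{R}_1$ and $\mathcal{R}_1\subset \mathcal{R}$ separately, the first being immediate and the second relying on an optional stopping argument, and then to deduce the closed-convex-hull statement from the compactness and convexity of $\mathcal{R}_1$.

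For $\mathcal{R}_0\subset \mathcal{R}_1$ there is nothing to do beyond invoking Lemma \ref{inclusion strict}: since $\mathcal{A}_0\subset \mathcal{A}_1$, and the occupation measures $(\mu^{\op}, m^{\op})$ are built from $\op$ by the same formulas regardless of which class $\op$ belongs to, the inclusion of the measure sets follows directly from the definitions of $\mathcal{R}_0$ and $\mathcal{R}_1$.

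The heart of the proof is $\mathcal{R}_1\subset \mathcal{R}$. Fix $\op\in \mathcal{A}_1$ and let $(\mu^{\op}, m^{\op})$ be the associated occupation measures. By the definition of $\mathcal{A}_1$ (equivalently, by Lemma \ref{lemma equiv mart} applied with $\overline{\mathbb G}=\overline{\mathbb F}$), for every $\varphi\in C(I\times E)$ the process $M(\varphi)$ is an $\overline{\mathbb F}$-martingale under $\op$. Since $\mathcal{F}^{\overline\tau}_t\subset \overline{\mathcal{F}}_t$ for all $t$, the time $\overline\tau$ is an $\overline{\mathbb F}$-stopping time bounded by $T$, so the optional sampling theorem in discrete time gives $\mathbb E^{\op}[M_{\overline\tau}(\varphi)]=\mathbb E^{\op}[M_0(\varphi)]=\int_E\varphi(0,x)m_0^*(dx)$. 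Writing $M_{\overline\tau}(\varphi)=\varphi(\overline\tau, \overline X_{\overline\tau})-\sum_{s=0}^{\overline\tau-1}\mathcal{L}(\varphi)(s, \overline X_s)$ and using the decompositions $\varphi(\overline\tau, \overline X_{\overline\tau})=\sum_{t=0}^T\varphi(t, \overline X_t)\mathds{1}_{\overline\tau=t}$ and $\sum_{s=0}^{\overline\tau-1}\mathcal{L}(\varphi)(s, \overline X_s)=\sum_{s=0}^{T-1}\mathcal{L}(\varphi)(s, \overline X_s)\mathds{1}_{s<\overline\tau}$, I would take expectations and recognize the two sums as $\sum_{t=0}^T\int_E\varphi(t,x)\mu_t^{\op}(dx)$ and $\sum_{s=0}^{T-1}\int_E\mathcal{L}(\varphi)(s,x)m_s^{\op}(dx)$, respectively. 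This reproduces exactly the constraint defining $\mathcal{R}$, hence $(\mu^{\op}, m^{\op})\in \mathcal{R}$.

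Finally, for the displayed consequence, I would note that $\mathcal{R}_0\subset \mathcal{R}_1$ together with the fact that $\mathcal{R}_1$ is closed and convex (Proposition \ref{R_1 compact}) forces $\overline{\text{conv}}(\mathcal{R}_0)\subset \mathcal{R}_1$, since $\overline{\text{conv}}(\mathcal{R}_0)$ is by definition the smallest closed convex set containing $\mathcal{R}_0$; combined with $\mathcal{R}_1\subset \mathcal{R}$ this yields the full chain. I do not anticipate a genuine obstacle: the computation in the third paragraph is the measure-theoretic analogue of the ``Derivation of the constraint'' paragraph, and all the structural work (the martingale characterization of $\mathcal{A}_1$ and the compactness and convexity of $\mathcal{R}_1$) has already been carried out in the preceding lemmas; the only care needed is the bookkeeping that turns the stopped-martingale identity into an integral identity against the occupation measures.
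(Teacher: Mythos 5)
Your proof is correct and follows essentially the same route as the paper: the first inclusion via Lemma \ref{inclusion strict}, the second via optional sampling of the bounded $\overline{\mathbb F}$-stopping time $\overline\tau$ applied to the martingale $M(\varphi)$ and identification of the resulting expectations with integrals against $(\mu^{\op}, m^{\op})$, and the closed-convex-hull claim from the compactness and convexity of $\mathcal{R}_1$ in Proposition \ref{R_1 compact}. Your explicit indicator decompositions $\varphi(\overline\tau, \overline X_{\overline\tau})=\sum_{t=0}^T\varphi(t, \overline X_t)\mathds{1}_{\overline\tau=t}$ and $\sum_{s=0}^{\overline\tau-1}\mathcal{L}(\varphi)(s, \overline X_s)=\sum_{s=0}^{T-1}\mathcal{L}(\varphi)(s, \overline X_s)\mathds{1}_{s<\overline\tau}$ merely spell out bookkeeping the paper leaves implicit.
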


\begin{proof}
The first inclusion is a consequence of the inclusion $\mathcal{A}_0\subset\mathcal{A}_1$ proved in Lemma \ref{inclusion strict}. For the second inclusion, let $\op\in \mathcal{A}_1$ and consider the associated measures $(\mu^{\op}, m^{\op})$. Using that for any $\varphi\in C(I\times E)$, $M(\varphi)$ is an $\overline{\mathbb F}$-martingale under $\op$ and $\overline \tau$ is an $\overline{\mathbb F}$-stopping time, we get
$$\mathbb E^{\op}[M_{\overline\tau}(\varphi)|\overline{\mathcal{F}}_0]=\varphi(0, \overline X_0),$$
which implies by taking the expectation and replacing the expression of $M_{\overline\tau}(\varphi)$,
$$\mathbb E^{\op}[\varphi(\overline\tau, \overline X_{\overline\tau})] = \mathbb E^{\op}[\varphi(0, \overline X_0)] + \mathbb E^{\op}\left[\sum_{t=0}^{\overline \tau-1}\mathcal{L}(\varphi)(t, \overline X_t)\right].$$
By the definition of the measures we obtain
$$\sum_{t=0}^T\int_E\varphi(t, x)\mu_t^{\op}(dx)=\int_E\varphi(0, x)m_0^*(dx) + \sum_{t=0}^{T-1}\int_E\mathcal{L}(\varphi)(t, x)m_t^{\op}(dx).$$
The last inclusions follow by Proposition \ref{R_1 compact}.

\end{proof}

\paragraph{Probabilistic representation}

We want to show that the sets $\mathcal{R}$ and $\mathcal{R}_1$ are equal. We will follow similar arguments to \cite{fleming1988} in order to show that the closed convex hull of $\mathcal{R}_0$ is equal to $\mathcal{R}$, which is sufficient to obtain the desired equality according to Proposition \ref{inclusions R}. \vspace{5pt}

\emph{Equality of the values.} We are given two functions $f\in C(I\setminus \{T\}\times E)$ and $g\in C(I\times E)$. We place ourselves on the filtered probability space $(\Omega_X, \mathcal{B}(\Omega_X), \mathbb F^X, \nu)$. For each $(t, x)\in I\times E$, define the probability measure on $\nu_{t, x}\in (\Omega_X, \mathcal{B}(\Omega_X))$ by
$$\nu_{t, x}(dx_0, \ldots, dx_T)=\prod_{s=0}^t\delta_x(dx_s)\prod_{\ell=t+1}^T\pi_\ell(x_{\ell-1}; dx_\ell).$$
Denote by $\mathcal{T}_{t, x}$ the set of $\{t, \ldots, T\}$-valued $\mathbb F^{X, \nu_{t, x}}$-stopping times. Define the value function
$$v(t, x):=\sup_{\tau\in \mathcal{T}_{t, x}}\mathbb E^{\nu_{t, x}}\left[\sum_{s=t}^{\tau-1}f(s, X_s) + g(\tau, X_{\tau})\right].$$
The value function verifies the dynamic programming principle (Theorem 1.9 in Chapter 1 in \cite{peskir2006}):
$$v(T, x)=g(T, x), \quad x\in E,$$
$$v(t, x)=\max\left\{f(t, x) + \int_{E}v(t+1, x')\pi_{t+1}(x, dx'), g(t, x)\right\}, \quad x\in E, \; t\in I\setminus\{T\}.$$
By backward induction, since $g$, $f$ and the transition kernels are continuous, we obtain that $v\in C(I\times E)$. Define the quantities
\begin{align*}
V^S(f, g)&:=\sup_{(\mu, m)\in \mathcal{R}_0}\sum_{t=0}^{T-1}\int_Ef(t, x)m_t(dx) + \sum_{t=0}^T\int_Eg(t, x)\mu_t(dx)
\\&=\sup_{\tau \in \mathcal{T}}\mathbb E^\nu\left[\sum_{t=0}^{\tau-1}f(t, X_t) + g(\tau, X_{\tau})\right],\\
V^{LP}(f, g)&:=\sup_{(\mu, m)\in \mathcal{R}}\sum_{t=0}^{T-1}\int_Ef(t, x)m_t(dx) + \sum_{t=0}^T\int_Eg(t, x)\mu_t(dx).    
\end{align*}
By Proposition \ref{inclusions R}, we have $V^S\leq V^{LP}$.
The Snell envelope $Y=(Y_t)_{t\in I}$ associated to $(f, g)$ is $Y_t=v(t, X_t)+\sum_{s=0}^{t-1}f(s, X_s)$. We say that $\tau^\star\in \mathcal{T}$ is an $(f, g)$-optimal stopping time if 
$$V^S(f, g)=\mathbb E^\nu\left[\sum_{t=0}^{\tau^\star-1}f(t, X_t) + g(\tau^\star, X_{\tau^\star})\right].$$
The random variable $\tau_{min}:=\inf\{t\in \{0, \ldots, T\}: v(t, X_t) =g(t, X_t)\}$ is an $(f, g)$-optimal stopping time and $(Y_{t\wedge \tau_{min}})_{t\in I}$ is an $\mathbb F^X$-martingale. In particular,
$$V^S(f, g)=\mathbb E^\nu[Y_{\tau_{min}}]=\mathbb E^\nu[v(0, X_0)]=\int_E v(0, x)m_0^*(dx).$$
Now, for all $(\mu, m)\in \mathcal{R}$, using $v$ as a test function in the constraint, we get
\begin{align*}
V^S(f, g)&=\sum_{t=0}^T\int_E v(t, x)\mu_t(dx) -\sum_{t=0}^{T-1}\int_E \mathcal{L}(v)(t, x)m_t(dx) \\
&\geq \sum_{t=0}^T\int_E g(t, x)\mu_t(dx) +\sum_{t=0}^{T-1}\int_E f(t, x)m_t(dx),
\end{align*}
where the inequality is a consequence of the dynamic programming principle. Taking the supremum over $(\mu, m)\in \mathcal{R}$, we deduce that $V^S(f, g)\geq V^{LP}(f, g)$, which implies that $V^S(f, g)= V^{LP}(f, g)$.

\begin{theorem}\label{proba rep}
The set $\mathcal{R}$ is equal to the closed convex hull of $\mathcal{R}_0$. As a consequence, $\mathcal{R}=\mathcal{R}_1$, meaning that we can represent any $(\mu, m)\in \mathcal{R}$ with some randomized stopping time.
\end{theorem}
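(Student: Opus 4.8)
The plan is to prove the single nontrivial inclusion $\mathcal{R}\subset\overline{\text{conv}}(\mathcal{R}_0)$ by a Hahn--Banach separation argument, since the reverse inclusion $\overline{\text{conv}}(\mathcal{R}_0)\subset\mathcal{R}$ is already supplied by Proposition \ref{inclusions R}. The final assertion $\mathcal{R}=\mathcal{R}_1$ will then follow immediately by sandwiching, because that same proposition gives $\overline{\text{conv}}(\mathcal{R}_0)\subset\mathcal{R}_1\subset\mathcal{R}$, so once the two extremes coincide the middle term is forced to equal them.

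First I would fix the ambient space. Each coordinate $\mu_t$ (resp.\ $m_t$) lies in $\mathcal{M}^s(E)$ endowed with the topology of weak convergence, i.e.\ the weak-$*$ topology arising from the Riesz identification $\mathcal{M}^s(E)=C(E)^*$ (valid since $E$ is compact metric). The ambient space $\prod_{t\in I}\mathcal{M}^s(E)\times\prod_{t\in I\setminus\{T\}}\mathcal{M}^s(E)$ is then a locally convex Hausdorff topological vector space, inside which $\mathcal{R}$, $\mathcal{R}_0$ and $\overline{\text{conv}}(\mathcal{R}_0)$ sit within the product of the compact sets $\mathcal{P}^{sub}(E)$. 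The decisive structural fact is that the continuous linear functionals on this space are exactly the maps
$$
\Phi_{f,g}(\mu,m)=\sum_{t=0}^{T-1}\int_E f(t,x)\,m_t(dx)+\sum_{t=0}^T\int_E g(t,x)\,\mu_t(dx),\qquad f\in C(I\setminus\{T\}\times E),\ g\in C(I\times E),
$$
precisely because in the weak-$*$ topology the dual of $\mathcal{M}^s(E)$ is $C(E)$ and $I$ is discrete.

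Next I would run the separation. Suppose, for contradiction, that there is $(\mu^\star,m^\star)\in\mathcal{R}$ with $(\mu^\star,m^\star)\notin\overline{\text{conv}}(\mathcal{R}_0)$. Since $\overline{\text{conv}}(\mathcal{R}_0)$ is closed and convex and $\{(\mu^\star,m^\star)\}$ is compact and convex, the separating hyperplane theorem in a locally convex space yields, after fixing the sign, some $\Phi_{f,g}$ with
$$
\Phi_{f,g}(\mu^\star,m^\star)>\sup_{(\mu,m)\in\overline{\text{conv}}(\mathcal{R}_0)}\Phi_{f,g}(\mu,m).
$$
Here I would invoke two observations: a linear functional attains the same supremum over a set and over its closed convex hull, so the right-hand side equals $\sup_{\mathcal{R}_0}\Phi_{f,g}=V^S(f,g)$; and membership $(\mu^\star,m^\star)\in\mathcal{R}$ gives $\Phi_{f,g}(\mu^\star,m^\star)\le\sup_{\mathcal{R}}\Phi_{f,g}=V^{LP}(f,g)$. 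The equality of values $V^S(f,g)=V^{LP}(f,g)$ established just before the theorem then forces $V^{LP}(f,g)>V^S(f,g)=V^{LP}(f,g)$, a contradiction. Hence $\mathcal{R}\subset\overline{\text{conv}}(\mathcal{R}_0)$, so $\mathcal{R}=\overline{\text{conv}}(\mathcal{R}_0)$, and the sandwich yields $\mathcal{R}=\mathcal{R}_1$.

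The main obstacle is the correct identification of the topological dual: I must be certain that every continuous linear functional on the product measure space, with the weak-convergence topology, is genuinely of the form $\Phi_{f,g}$ with continuous $f(t,\cdot),g(t,\cdot)$, so that the separating functional plugs \emph{verbatim} into the already-proven value equality. This rests on the fact that for compact metric $E$ the dual of $(\mathcal{M}^s(E),\text{weak-}*)$ is $C(E)$; continuity in the time variable is automatic since $I$ carries the discrete topology. Everything else — the separation theorem, the invariance of a linear supremum under passage to the closed convex hull, and the final sandwich — is routine.
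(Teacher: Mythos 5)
Your proposal is correct and takes essentially the same approach as the paper: the paper's proof likewise assumes a point of $\mathcal{R}\setminus\overline{\text{conv}}(\mathcal{R}_0)$, separates it by a continuous linear functional of the form $\Phi_{f^0,g^0}$ (using that the dual of $\mathcal{M}^s(E)$ under the weak topology is $C(E)$, via the separation theorem in Rudin), derives the contradiction $V^S(f^0,g^0)<V^{LP}(f^0,g^0)$ against the equality of values proved just before the theorem, and obtains $\mathcal{R}=\mathcal{R}_1$ from the sandwich $\overline{\text{conv}}(\mathcal{R}_0)\subset\mathcal{R}_1\subset\mathcal{R}$ of Proposition \ref{inclusions R}. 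The only difference is that you make explicit two details the paper leaves implicit, namely the identification of the dual of the finite product space with pairs $(f,g)$ of continuous functions and the fact that a linear functional has the same supremum over a set and over its closed convex hull.
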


\begin{proof}
Assume that there exists some $(\mu^0, m^0)\in \mathcal{R}\setminus \overline{\text{conv}}(\mathcal{R}_0)$. Recall that for any compact metric space $K$, the topological dual of $\mathcal{M}_s(K)$ endowed with the weak topology $\sigma(\mathcal{M}_s(K), C(K))$ is $C(K)$. In particular, by Theorem 3.4 (b) p.59 in \cite{rudin1991}, there exists some $(f^0, g^0)\in C(I\setminus\{T\}\times E)\times C(I\times E)$ and $c\in \mathbb R$ such that for all $(\mu, m)\in \overline{\text{conv}}(\mathcal{R}_0)$,
\begin{multline*}
\sum_{t=0}^{T-1}\int_E f^0(t, x)m_t(dx) + \sum_{t=0}^T\int_E g^0(t, x)\mu_t(dx)\\<c
<\sum_{t=0}^{T-1}\int_E f^0(t, x)m_t^0(dx) + \sum_{t=0}^T\int_E g^0(t, x)\mu_t^0(dx)
\leq V^{LP}(f^0, g^0).
\end{multline*}
Taking the supremum over $(\mu, m)\in \mathcal{R}_0$, we get $V^{S}(f^0, g^0)<V^{LP}(f^0, g^0)$, which is a contradiction.

\end{proof}

\section{Other technical results}

\subsection{Exchangeable random variables and De Finetti’s theorem}

We recall a version of De Finetti's theorem adapted to our setting. Let $(E, d)$ and $(F, \rho)$ be two complete and separable metric spaces. Consider a complete probability space $(\Omega, \mathcal{F}, \mathbb P)$, a sequence of $E$-valued random variables $(X_n)_{n\geq 1}$ and an $F$-valued random variable $Y$. 

\begin{definition}\label{def cond iid}
We say that $(X_n)_{n\geq 1}$ is i.i.d. given $Y$ if the following two conditions are satisfied.
\begin{enumerate}[(1)]
\item For all $n\geq 1$ and all $(B_k)_{k\leq n}\in \mathcal{B}(E)^n$, 
$$\mathbb P\left(\cap_{k=1}^n\{X_k\in B_k\}|Y\right)=\prod_{k=1}^n\mathbb P(X_k\in B_k|Y)\quad a.s.$$
\item For all $n, k\geq 1$ and all $B\in \mathcal{B}(E)$, $\mathbb P(X_n\in B|Y)=\mathbb P(X_k\in B|Y)$ a.s.
\end{enumerate}
\end{definition}

If $(X_n)_{n\geq 1}$ is i.i.d. given $Y$, then one can show that the sequence $(X_n, Y)_{n\geq 1}$ is exchangeable. In particular, using the same argument as in \cite{kingman1978}, we obtain that for all measurable functions $\psi:E\times F\mapsto \mathbb R$ such that $\mathbb E[|\psi(X_1, Y)|]<\infty$,
\begin{equation}\label{exchangeable}
\lim_{n\rightarrow\infty}\frac{1}{n}\sum_{k=1}^n\psi(X_k, Y)=\mathbb E[\psi(X_1, Y)|\mathcal{G}]\quad a.s.    
\end{equation}
where $\mathcal{G}$ is the exchangeable $\sigma$-algebra. Using Theorem 3 in \cite{olshen1974}, one can replace $\mathcal{G}$ in \eqref{exchangeable} by $\sigma(Y)$ in order to derive the following result.

\begin{theorem}\label{clln}
If $(X_n)_{n\geq 1}$ is i.i.d. given $Y$, then, for all measurable functions $\psi:E\times F\mapsto \mathbb R$ such that $\mathbb E[|\psi(X_1, Y)|]<\infty$, we have
$$\lim_{n\rightarrow\infty}\frac{1}{n}\sum_{k=1}^n\psi(X_k, Y)=\mathbb E[\psi(X_1, Y)|Y]\quad a.s.$$
\end{theorem}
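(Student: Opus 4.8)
The plan is to reduce the statement to the classical strong law of large numbers for exchangeable sequences (Kingman's ergodic theorem, \cite{kingman1978}), applied not to $(X_n)_{n\geq 1}$ directly but to the paired sequence $W_n := (X_n, Y)$ valued in $E\times F$, and then to identify the limiting conditional expectation. First I would verify that $(W_n)_{n\geq 1}$ is exchangeable. Fix $n\geq 1$ and a permutation $\sigma$ of $\{1,\dots,n\}$; for bounded measurable $h_1,\dots,h_n$ on $E\times F$, conditioning on $Y$ and using the conditional independence in part (1) of Definition \ref{def cond iid} gives
\[
\mathbb E\Big[\prod_{k=1}^n h_k(X_{\sigma(k)}, Y)\Big] = \mathbb E\Big[\prod_{k=1}^n \mathbb E[h_k(X_{\sigma(k)}, Y)\mid Y]\Big].
\]
By part (2) of Definition \ref{def cond iid}, the conditional law of each $X_{\sigma(k)}$ given $Y$ coincides with that of $X_1$, so each factor equals $\mathbb E[h_k(X_1, Y)\mid Y]$ and the right-hand side does not depend on $\sigma$. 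Hence the joint law of $(W_1,\dots,W_n)$ is invariant under permutations, i.e.\ $(W_n)_{n\geq1}$ is exchangeable.

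Next I would invoke the exchangeable strong law: for an integrable functional, the Cesàro averages of an exchangeable sequence converge almost surely (and in $L^1$) to the conditional expectation with respect to the exchangeable (invariant) $\sigma$-algebra $\mathcal G$. Applying this to the functional $\psi$ on $E\times F$, whose integrability is exactly the hypothesis $\mathbb E[|\psi(X_1, Y)|]<\infty$, yields $\frac1n\sum_{k=1}^n \psi(X_k, Y) \to \mathbb E[\psi(X_1, Y)\mid \mathcal G]$ almost surely, which is precisely \eqref{exchangeable}.

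The remaining, and most delicate, step is to replace $\mathcal G$ by $\sigma(Y)$. One inclusion is immediate: since $Y$ is the common second coordinate of every $W_k$, the event $\{Y\in A\}$ is invariant under permutations of any finite block of the sequence, so $\sigma(Y)\subseteq \mathcal G$ up to completion. The reverse inclusion is the crux: because $(X_n)_{n\geq 1}$ is conditionally i.i.d.\ given $Y$, the directing (de Finetti) measure of the exchangeable sequence is $\sigma(Y)$-measurable, and Theorem 3 in \cite{olshen1974} guarantees that the invariant $\sigma$-algebra is then contained in $\sigma(Y)$ modulo null sets. Thus $\mathcal G = \sigma(Y)$ up to null sets and $\mathbb E[\psi(X_1, Y)\mid \mathcal G] = \mathbb E[\psi(X_1, Y)\mid Y]$ almost surely, which gives the claim. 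I expect this identification of the invariant $\sigma$-algebra with $\sigma(Y)$ to be the main obstacle, since it is exactly the place where the conditional-i.i.d.\ structure (rather than mere exchangeability) must be exploited; everything else is either a direct conditioning computation or a citation.
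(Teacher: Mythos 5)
Your proposal is correct and takes essentially the same route as the paper, which likewise forms the paired sequence $(X_n, Y)_{n\geq 1}$, checks its exchangeability, applies the strong law for exchangeable sequences as in \cite{kingman1978} to get almost sure convergence to $\mathbb E[\psi(X_1, Y)\mid \mathcal G]$ with $\mathcal G$ the exchangeable $\sigma$-algebra, and then invokes Theorem 3 of \cite{olshen1974} to replace $\mathcal G$ by $\sigma(Y)$. Your write-up actually supplies more detail than the paper's brief sketch (the explicit permutation computation and the two inclusions identifying $\mathcal G$ with $\sigma(Y)$ up to null sets via the $\sigma(Y)$-measurability of the directing measure), all of which is sound.
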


\subsection{Dynkin's formula for randomized stopping times}

\begin{proposition}\label{Dynkin rand}
Let $\varphi\in C(I\times E)$ and $\kappa$ be a randomized stopping time. Then,
$$\sum_{t=0}^T\mathbb E[\varphi(t, X_t)\kappa(\{t\})]=\mathbb E[\varphi(0, X_0)] + \sum_{t=0}^{T-1}\mathbb E[\mathcal{L}(\varphi)(t, X_t)\kappa(\{t+1, \ldots, T\})].$$
\end{proposition}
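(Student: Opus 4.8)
The plan is to exploit the fact that $M(\varphi)$ is an $\mathbb{F}^{X,\nu}$-martingale under $\nu$ (by Lemma \ref{lemma equiv mart}, since $X$ is the Markov chain with kernels $(\pi_t)_{t\in I^*}$), together with the defining adaptedness property of a randomized stopping time, thereby reducing the statement to a discrete summation-by-parts identity. First I would recall the decomposition that comes directly from the definition of $M(\varphi)$, namely $\varphi(t,X_t)=M_t(\varphi)+\sum_{s=0}^{t-1}\mathcal{L}(\varphi)(s,X_s)$ for every $t\in I$, with the convention that the empty sum vanishes at $t=0$. Substituting this into the left-hand side $\sum_{t=0}^T\varphi(t,X_t)\kappa(\{t\})$ splits it into a \emph{martingale part} $\sum_{t=0}^T M_t(\varphi)\kappa(\{t\})$ and a \emph{drift part} $\sum_{t=0}^T\big(\sum_{s=0}^{t-1}\mathcal{L}(\varphi)(s,X_s)\big)\kappa(\{t\})$.

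For the drift part I would interchange the order of the two finite sums: summing first over $t$ for fixed $s$ turns $\sum_{t=0}^T\sum_{s=0}^{t-1}$ into $\sum_{s=0}^{T-1}\sum_{t=s+1}^T$, and since $\sum_{t=s+1}^T\kappa(\{t\})=\kappa(\{s+1,\dots,T\})$, the drift part becomes exactly $\sum_{s=0}^{T-1}\mathcal{L}(\varphi)(s,X_s)\kappa(\{s+1,\dots,T\})$, which is the drift term on the right-hand side of the claim. It therefore only remains to show that the martingale part has expectation $\mathbb{E}[\varphi(0,X_0)]=\mathbb{E}[M_0(\varphi)]$.

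This last step is where the definition of a randomized stopping time enters, and I expect it to be the only genuinely substantive point. Since $\{t\}\in\sigma(\{0\},\dots,\{t\})$, the defining property of $\kappa\in\mathcal{K}$ gives that $\kappa(\{t\})$ is $\mathcal{F}_t^{X,\nu}$-measurable; moreover all the quantities in play are bounded, because $E$ is compact and $\varphi,\mathcal{L}(\varphi)$ are continuous while $\kappa(\{t\})\in[0,1]$, so there are no integrability issues. Conditioning on $\mathcal{F}_t^{X,\nu}$ and using the martingale property then yields, for each $t$, the identity $\mathbb{E}[M_t(\varphi)\kappa(\{t\})]=\mathbb{E}\big[\mathbb{E}[M_T(\varphi)\mid\mathcal{F}_t^{X,\nu}]\,\kappa(\{t\})\big]=\mathbb{E}[M_T(\varphi)\kappa(\{t\})]$. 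Summing over $t$ and using that $\kappa$ is a probability kernel, so that $\sum_{t=0}^T\kappa(\{t\})=1$ a.s., gives $\mathbb{E}\big[\sum_{t=0}^T M_t(\varphi)\kappa(\{t\})\big]=\mathbb{E}[M_T(\varphi)]=\mathbb{E}[M_0(\varphi)]=\mathbb{E}[\varphi(0,X_0)]$, which closes the argument.

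As a cross-check I would note an alternative that avoids the explicit computation altogether: by Corollary \ref{coro disint} the kernel $\kappa$ lifts to a probability $\overline{\mathbb P}=\nu(dx)\kappa(x,d\theta)\in\mathcal{A}_1$ on $\overline{\Omega}$, under which $\overline X$ is an $\overline{\mathbb F}$-Markov chain and $\overline\tau$ is, trivially, an $\overline{\mathbb F}$-stopping time. Optional sampling of $M(\varphi)$ at $\overline\tau$, which is precisely the computation already carried out in the proof of Proposition \ref{inclusions R}, gives the Dynkin identity for $\overline\tau$, and disintegrating back along $\kappa$ by Fubini recovers the claimed formula term by term. I would nevertheless present the direct martingale argument as the main proof, since it is self-contained and makes transparent exactly where the adaptedness of $\kappa$ is used.
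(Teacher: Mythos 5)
Your main argument is correct, but it takes a genuinely different route from the paper: the paper proves the proposition by \emph{lifting} to the product space $\overline{\Omega}=\Omega\times I$ with $\overline{\mathbb P}(d\omega,ds)=\mathbb P(d\omega)\kappa(\omega,ds)$, checking that $\overline X_t(\omega,s)=X_t(\omega)$ remains a Markov chain for the enlarged filtration $\overline{\mathcal F}_t=\mathcal F_t\otimes\mathcal F_t^{\theta}$ (this is where the adaptedness of $\kappa$ enters, via the factorization $\overline{\mathbb E}[\psi(\overline X_t)\mathds{1}_{B\times C}]=\mathbb E[\psi(X_t)\mathds{1}_B\kappa(C)]$), observing that $\theta(\omega,s)=s$ is by construction an $\overline{\mathbb F}$-stopping time, and then applying Dynkin's formula followed by Fubini. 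In other words, the ``cross-check alternative'' you sketch at the end is essentially the paper's actual proof, while your primary argument --- decomposing $\varphi(t,X_t)=M_t(\varphi)+\sum_{s<t}\mathcal L(\varphi)(s,X_s)$, interchanging the finite sums in the drift part to produce $\kappa(\{s+1,\dots,T\})$, and handling the martingale part via $\mathbb E[M_t(\varphi)\kappa(\{t\})]=\mathbb E[M_T(\varphi)\kappa(\{t\})]$ (using that $\kappa(\{t\})$ is $\mathcal F_t^{X,\nu}$-measurable and bounded) together with $\sum_t\kappa(\{t\})=1$ --- stays entirely on the original space and is, as you say, self-contained: it makes transparent that the only substantive use of the randomized-stopping-time structure is the adaptedness needed in the conditioning step, and it avoids verifying the Markov property on the enlarged space. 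What the paper's construction buys in exchange is conceptual coherence with the rest of the appendix: it exhibits $\kappa$ as a genuine stopping time on an enlargement, consistent with the identification $\mathcal A_1=\{\nu(dx)\kappa(x,d\theta)\}$ of Corollary \ref{coro disint}, and reuses the optional-sampling computation already carried out for $\mathcal A_1$ in Proposition \ref{inclusions R}. One minor remark: you cite Lemma \ref{lemma equiv mart}, which is stated on the canonical space $\overline\Omega$; what you actually need is its trivial analogue for $X$ under $\nu$ with the completed filtration $\mathbb F^{X,\nu}$ (completion by $\nu$-null sets preserves both the Markov and the martingale property), which is harmless but worth stating explicitly.
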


\begin{proof}
Let $\overline{\Omega}:=\Omega\times I$, $\overline{\mathcal{F}}:=\mathcal{F}\otimes 2^I$, $\overline{\mathbb P}(d\omega, ds):=\mathbb P(d\omega)\kappa(\omega, ds)$. Consider $\overline X_t(\omega, s):=X_t(\omega)$ and $\theta(\omega, s):= s$. Define the filtration $\mathbb F^\theta:=(\mathcal{F}^\theta_t)_t$ with $\mathcal{F}_t^\theta:=\sigma(\{\theta=0\}, \ldots, \{\theta=t\})$. Now, construct the filtration $\overline{\mathbb F}:=(\overline{\mathcal{F}}_t)_t$, with $\overline{\mathcal{F}}_t:=\mathcal{F}_t\otimes \mathcal{F}^\theta_t$. By construction $\theta$ is an $\overline{\mathbb F}$-stopping time. We are going to show that $\overline{X}:=(\overline{X}_t)_t$ is an $\overline{\mathbb F}$-Markov chain. Let $\psi\in C(E)$. We show first that 
$$\overline{\mathbb E}[\psi(\overline{X}_{t})|\overline{\mathcal{F}}_{t-1}]=\mathbb E[\psi(X_t)|\mathcal{F}_{t-1}],\quad \overline{\mathbb P}-a.s.$$
Let $B\in \mathcal{F}_{t-1}$ and $C\in \mathcal{F}_{t-1}^\theta$. We have
\begin{align*}
\overline{\mathbb E}[\psi(\overline{X}_{t})\mathds{1}_{B\times C}]&=\mathbb E[\psi(X_t)\mathds{1}_{B}\kappa(C)]=\mathbb E[\mathbb E[\psi(X_t)|\mathcal{F}_{t-1}]\mathds{1}_{B}\kappa(C)]=\overline{\mathbb E}[\mathbb E[\psi(X_t)|\mathcal{F}_{t-1}]\mathds{1}_{B\times C}],
\end{align*}
where in the second equality we used the measurability property of $\kappa$. We finally have
$$\overline{\mathbb E}[\psi(\overline{X}_{t})|\overline{\mathcal{F}}_{t-1}]=\mathbb E[\psi(X_t)|\mathcal{F}_{t-1}]=\int_E\psi(x)\pi_{t}(\overline{X}_{t-1}; dx).$$
This shows that $\overline{X}$ is an $\overline{\mathbb F}$-Markov chain. Now, by Dynkin's formula,
$$\overline{\mathbb E}[\varphi(\theta, \overline{X}_{\theta})]=\overline{\mathbb E}[\varphi(0, \overline{X}_0)] + \overline{\mathbb E}\left[\sum_{t=0}^{\theta-1}\mathcal{L}(\varphi)(t, \overline X_t)\right].$$
By applying Fubini's theorem we obtain the desired result.

\end{proof}

\subsection{A continuity property for the price function}

\begin{lemma}\label{price cont}
Let $(E, d)$ be a metric space. Consider the function $P:E\rightarrow \mathbb R_+$ defined by
$$P(x):=\inf\{p\geq 0:r(x)\leq s(x, p)\}\wedge p_{max}, \quad x\in E,$$
where $r:E\rightarrow \mathbb R_+$ and $s:E\times \mathbb R_+\rightarrow\mathbb R_+$ are continuous, for each $x\in E$, $p\mapsto s(x, p)$ is increasing with $s(x, 0)=0$ and there exists $\tilde p\geq 0$ such that $\inf_{x\in E}s(x, \tilde p)\geq \sup_{x\in E}r(x)$. Then there exists for each $x\in E$ a unique $p^\star(x)\in [0, \tilde p]$ such that
$$r(x)=s(x, p^\star(x)), \quad P(x)=p^\star(x)\wedge p_{max}, \quad x\in E.$$
Moreover, $x\mapsto p^\star(x)$ is continuous and as a consequence $P$ is also continuous. 
\end{lemma}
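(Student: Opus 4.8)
The plan is to first pin down $p^\star(x)$ pointwise, then identify it with the infimum defining $P$, and finally upgrade pointwise control to continuity via a compactness argument. To begin, I would fix $x \in E$ and note that $s(x, 0) = 0 \leq r(x)$ while, by the standing hypothesis, $r(x) \leq \sup_{x' \in E} r(x') \leq \inf_{x' \in E} s(x', \tilde p) \leq s(x, \tilde p)$. Since $p \mapsto s(x, p)$ is continuous, the intermediate value theorem yields a point $p^\star(x) \in [0, \tilde p]$ with $s(x, p^\star(x)) = r(x)$; strict monotonicity of $s(x, \cdot)$ makes this point unique. (Here I read the hypothesis ``increasing'' as strictly increasing, which is exactly what uniqueness requires.)

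Next I would identify the infimum. By strict monotonicity, $s(x, p) < r(x)$ for $p < p^\star(x)$ and $s(x, p) \geq r(x)$ for $p \geq p^\star(x)$, so the set $\{p \geq 0 : r(x) \leq s(x, p)\}$ equals $[p^\star(x), \infty)$, whose infimum is $p^\star(x)$. This gives $P(x) = p^\star(x) \wedge p_{max}$ directly.

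The continuity of $p^\star$ is the only step requiring care, and I would prove it sequentially. Let $x_n \to x$. Since all the $p^\star(x_n)$ lie in the compact interval $[0, \tilde p]$, it suffices to show that every convergent subsequence converges to $p^\star(x)$. If $p^\star(x_{n_k}) \to \ell$, then the joint continuity of $s$ gives $s(x_{n_k}, p^\star(x_{n_k})) \to s(x, \ell)$, while continuity of $r$ gives $s(x_{n_k}, p^\star(x_{n_k})) = r(x_{n_k}) \to r(x)$; hence $s(x, \ell) = r(x)$, and uniqueness forces $\ell = p^\star(x)$. A sequence in a compact set all of whose subsequential limits coincide converges to that common limit, so $p^\star(x_n) \to p^\star(x)$.

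Finally, $P = p^\star \wedge p_{max}$ is the minimum of a continuous function and a constant, hence continuous. The main (mild) obstacle is the continuity of $p^\star$: the crux is that compactness of $[0, \tilde p]$ combined with the uniqueness from the first step converts subsequential convergence into genuine convergence, and it is essential here that $s$ be \emph{jointly} continuous in $(x, p)$ rather than merely continuous in each variable separately, so that the identification $s(x_{n_k}, p^\star(x_{n_k})) \to s(x, \ell)$ is valid.
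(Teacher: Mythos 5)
Your proof is correct and follows essentially the same route as the paper's: existence and uniqueness of $p^\star(x)$ from continuity and (strict) monotonicity of $s(x,\cdot)$ together with the bracketing $s(x,0)=0\leq r(x)\leq s(x,\tilde p)$, then continuity of $p^\star$ via compactness of $[0,\tilde p]$ and identification of every subsequential limit $\ell$ through $s(x,\ell)=r(x)$. If anything, you are slightly more careful than the paper at two points: you make explicit that ``increasing'' must be read as strictly increasing for uniqueness, and you spell out the standard subsequence argument where the paper merely says ``assume without loss of generality that the whole sequence converges.''
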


\begin{proof}
For each $x\in E$, since $r(x)\leq s(x, \tilde p)$, we have that $\tilde p\in \{p\geq 0:r(x)\leq s(x, p)\}$. Using that $s(x, 0)=0\leq r(x)$ and $p\mapsto s(x, p)$ is continuous and increasing we must have that there exists a unique $p^\star(x)\in [0, \tilde p]$ such that $r(x)=s(x, p^\star(x))$ and for all $p<p^\star(x)$, $r(x)>s(x, p^\star(x))$. This allows us to write $P(x)=p^\star(x)\wedge p_{max}$. Let us show that $p^\star(x)$ is continuous. Let $(x_n)_{n\geq 1}\subset E$ converging to $x\in E$. Since $[0, \tilde p]$ is compact, there exists at least one limit point of the sequence $(p^\star(x_n))_{n\geq 1}$. Let $p_0$ be a limit point of the above sequence and assume without loss of generality that the whole sequence converges to that point. Using the continuity of $r$ and $s$ and passing to the limit in the equality $r(x_n)=s(x_n, p^\star(x_n))$, we obtain $r(x)=s(x, p_0)$, and henceforth $p_0=p^\star(x)$.

\end{proof}

\printbibliography

\end{document}